\documentclass[12pt]{amsart} 

\usepackage{amsfonts,amssymb,stmaryrd,amscd,amsmath,latexsym,amsbsy,xcolor,tikz-cd,hyperref}
\numberwithin{equation}{section}
\definecolor{commentred}{RGB}{190,0,0}
\definecolor{additionpurple}{RGB}{120,0,160}
\definecolor{suggestionblue}{RGB}{0,70,180}

\hypersetup{hidelinks}

\newtheorem{theorem}{Theorem}[section]
\newtheorem{lemma}[theorem]{Lemma}
\newtheorem{proposition}[theorem]{Proposition}
\newtheorem{corollary}[theorem]{Corollary}
\theoremstyle{definition}
\newtheorem{definition}[theorem]{Definition}

\newtheorem{example}[theorem]{Example}

\newtheorem{remark}[theorem]{Remark}

\newcommand{\id}{\text{id}}

\newcommand{\Hom}{\text{Hom}}

\newcommand{\Id}{\text{Id}}
\newcommand{\Aut}{\text{Aut}}

\newcommand{\Rep}{\text{Rep}}

\newcommand{\C}{\mathcal{C}}

\newcommand{\ben}{\begin{enumerate}}
\newcommand{\een}{\end{enumerate}}

\theoremstyle{plain}

\newtheorem*{sol}{Solution}

\theoremstyle{definition}

\theoremstyle{remark}

\newcommand{\solu}[1]{\begin{sol}{\bf (\ref{#1})}}

\def\C{\mathcal{C}}
\def\D{\mathcal{D}}
\def\E{\mathcal{E}}

\def\Aut{\mathop{\mathrm{Aut}}\nolimits}

\def\Z{\mathcal{Z}}

\def\Hom{\mathrm{Hom}}

\def\Im{\mathop{Im}}

\def\B{\mathcal{B}}

\def\Vec{\mathrm{Vec}}

\def\k{\mathbf{k}}
\def\ev{\mathrm{ev}}
\def\coev{\mathrm{coev}}
\def\Rep{\mathop{\mathrm{Rep}}\nolimits}

\begin{document}

\title{Twisted Deligne products of semisimple tensor categories}
\author{Pavel Etingof}
\address{Department of Mathematics
Massachusetts Institute of Technology
\newline
77 Massachusetts Avenue,
Cambridge, MA 02139,
USA
}
\email{etingof@math.mit.edu}

\author{Dmitri Nikshych}
\address{Department of Mathematics
University of New Hampshire
\newline Durham, NH 03824}
\email{ Dmitri.Nikshych@unh.edu}

\author{Victor Ostrik}
\address{Department of Mathematics, 
University of Oregon
\newline
Eugene, OR 97403
}
\email{vostrik@uoregon.edu} 

\begin{abstract}
 We discuss the classification of twisted Deligne products of two semisimple tensor categories $\C,\D$, i.e., categorifications of the tensor product of their Grothendieck rings in which the factors
are categorified by $\C$ and $\D$. In particular, we show that if  both factors have no non-trivial gradings, or if one factor has neither non-trivial gradings nor tensor structures on the identity functor, then the only twisted Deligne product is the ordinary one. Using the work \cite{MPP}, this gives, in principle, a group-theoretical classification of twisted Deligne products and, more generally, exact factorizations of arbitrary fusion categories. In the Appendix we introduce the notion of categorical $n$-cocycles for $n=2,3,4$ and show that they are all pullbacks of group $n$-cocycles from the universal grading group of the underlying based ring. In the case of $4$-cocycles, this answers a question of Johnson-Freyd, Ostrik and Yu. 
\end{abstract} 

\maketitle

\centerline{\bf To George Lusztig for his 80th birthday with admiration}

\tableofcontents

\section{Introduction} 

Let $\bold k$ be an algebraically closed field. 
Let $\C,\D,\E$ be semisimple tensor categories over $\bold k$ (\cite{EGNO}, Definition 4.1.1).
Following \cite{G}, 
we say that $\E=\C\bullet \D$ is an {\it exact factorization} if $\E$ is equipped with tensor subcategory embeddings $\C,\D\hookrightarrow \E$ such that the tensor product defines an abelian equivalence $\C\boxtimes \D\to \E$. It is shown in \cite{G} that this is equivalent to the conditions that $\C\D=\E$ and $\C\cap \D=\Vec$.

Furthermore, we say that $\E=\C\bullet \D$ is a {\it twisted Deligne product} if 
$$
X\otimes Y\cong Y\otimes X,\ X\in \C,\ Y\in \D.
$$
That is, a twisted Deligne product of $\C,\D$ is a categorification of the tensor product ${\rm Gr}(\C)\otimes {\rm Gr}(\D)$ of the Grothendieck rings of $\C,\D$ in which the subcategories 
corresponding to the factors are identified with $\C,\D$, respectively. Twisted Deligne products of $\C,\D$ 
are distinguished up to equivalence that restricts to identity tensor functors on $\C$ and $\D$. The simplest example of a twisted Deligne product is the usual Deligne product, $\E=\C\boxtimes \D$. 

\begin{example}\label{groupprod} For a group $L$ and $\omega\in H^3(L,\bold k^\times)$, denote by $\Vec_L^\omega$ 
the tensor category of $L$-graded vector spaces with associativity defined by a cocycle representing $\omega$. Let $L=GH$ be an exact factorization of groups, and $\omega\in H^3(L,\bold k^\times)$. 
Let $\omega_G,\omega_H$ be the restrictions of $\omega$ to $G,H$. Then $\Vec_L^\omega=\Vec_G^{\omega_G}\bullet \Vec_H^{\omega_H}$ is an exact factorization. Moreover, this is a twisted Deligne product iff $G,H$ commute, i.e., $L=G\times H$.
\end{example} 
 
The main goal of this paper is to discuss classification of twisted Deligne products. Namely, one of our main results 
is the following theorem. 

For a tensor category $\mathcal C$, let $U_{\mathcal C}$ be its universal grading group 
(\cite[3.2]{GN}, \cite[Subsection 4.14]{EGNO}), and let ${\rm Aut}_1(\C)$ be the group of isomorphism classes of tensor structures on the identity functor ${\rm Id}_\C$.

 \begin{theorem} \label{maint} (i) (Theorem \ref{part1}) If $U_\C= U_\D=1$ then 
 any twisted Deligne product $\C\bullet \D$ is equivalent to the usual Deligne product $\C\boxtimes \D$ 
 (tautologically on $\C$ and $\D$). 
 
 (ii) (Theorem \ref{part2}) 
 The same holds if $U_\C=1$ and $\Aut_1(\C)=1$ or $U_\D=1$ and $\Aut_1(\D)=1$.
 \end{theorem}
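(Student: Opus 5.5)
The strategy is to encode a twisted Deligne product by its ``commutativity data'' and show these data are trivial up to equivalence. In an exact factorization $\E=\C\bullet\D$, every simple object of $\E$ has the form $X\otimes Y$ with $X\in\C$, $Y\in\D$ simple, and the twisted Deligne condition gives isomorphisms $c_{X,Y}\colon X\otimes Y\to Y\otimes X$. Since $Y\otimes X$ is simple, each $c_{X,Y}$ is unique up to a scalar. First I will fix such a choice for all simple $X,Y$ (with $c_{\mathbf 1,Y}$ and $c_{X,\mathbf 1}$ the identity) and extend it additively. I will then compare the two natural isomorphisms $X\otimes X'\otimes Y\to Y\otimes X\otimes X'$. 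One is $c_{X\otimes X',Y}$, defined through the decomposition of $X\otimes X'$ into simples. The other is the composite $(c_{X,Y}\otimes \id)(\id\otimes c_{X',Y})$. They differ by an automorphism, which by semisimplicity and simplicity of $Z\otimes Y$ for each simple summand $Z$ is a scalar on each isotypic component. The comparison of $c_{X,Y\otimes Y'}$ with the composite through $c_{X,Y}$ and $c_{X,Y'}$ gives a symmetric defect. The associativity pentagon of $\E$ then shows that these defects define a ``categorical 2-cocycle'' on $\C$ with values in the functions on simples of $\D$ (and symmetrically). In the appendix's terminology this is a 2-cocycle of the based ring $\mathrm{Gr}(\C)$.

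The key step is to use the appendix result that categorical $2$-cocycles are pullbacks of group $2$-cocycles from the universal grading group. For part (i), since $U_\C=U_\D=1$, all the defects are coboundaries. Adjusting the scalars in $c_{X,Y}$ therefore makes $c$ a half-braiding that is monoidal in both variables, up to one remaining bimultiplicative ambiguity, which is a homomorphism-type datum. Each $Y$ acts on $\C$ by a natural automorphism of $\Id_\C$ compatible with tensor products. Such an automorphism of $\Id_\C$ is a character of $U_\C$, so it is trivial here. With $c$ now a bimonoidal commutativity constraint, the standard argument produces a tensor equivalence $\C\boxtimes\D\to\E$, $X\boxtimes Y\mapsto X\otimes Y$, with tensor structure built from $c$, and it restricts to the identity on $\C$ and $\D$.

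For part (ii), assume $U_\C=1$ and $\Aut_1(\C)=1$. Only the defect measured on the $\C$ side can be killed via the appendix. The remaining data can be packaged as follows: for each simple $Y\in\D$, the functor $X\mapsto Y\otimes X\otimes Y^{-}$ restricts to the identity functor of $\C$, equipped with some tensor structure. More precisely, conjugation by $Y$ gives a monoidal autoequivalence of $\C$ whose underlying functor is isomorphic to the identity, since the multiplicative defect is trivial when $U_\C=1$. This yields a map from simples of $\D$ to $\Aut_1(\C)$, and because $\Aut_1(\C)=1$ it is trivial. So the half-braidings can be chosen monoidal in $X$. Monoidality in $Y$ then follows from naturality, possibly after one more rescaling by a character of $U_\C=1$. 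The argument concludes as in part (i).

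The hardest part will be the bookkeeping that identifies the obstruction precisely as a categorical 2-cocycle together with elements of $\Aut_1$ or $\widehat{U}$, rather than some more complicated mixed invariant. In particular, in part (ii) the defect on the $\D$ side must be shown to disappear without any hypothesis on $\D$. I expect this to come from viewing $\D$ as acting on $\C$ by tensor autoequivalences, i.e. a monoidal functor from $\D$ to a categorical group whose objects are trivialized by the hypotheses.
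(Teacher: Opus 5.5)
The step that fails is your identification of the defect in part (i). For simple $X,X'\in\C$ and $Y\in\D$, compare $c_{X\otimes X',Y}$ with $(c_{X,Y}\otimes\id)(\id\otimes c_{X',Y})$. The discrepancy is an automorphism of $\bigoplus_Z\Hom(Z,X\otimes X')\otimes(Z\otimes Y)$. Since the $Z\otimes Y$ are pairwise distinct simples, on each isotypic block it is an element of $GL(\Hom(Z,X\otimes X'))$, which need not be a scalar. What you obtain is a tensor structure $\mathcal J(\phi_Y)$ on $\Id_\C$. Rescaling the $c_{X,Y}$ changes it only by a coboundary (cf.\ \eqref{tran1}), so the real invariant is a class $\widehat\phi_Y\in\Aut_1(\C)$.

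Even for multiplicity-free $\C$ this is a scalar $J(i,j;k)$ that depends on the summand $X_k$. The appendix only treats functions $f(i,j)$ acting by one scalar on all of $X_i\otimes X_j$. So Theorem \ref{catcoc} does not apply, and $U_\C=1$ alone does not make $\widehat\phi_Y$ trivial. Your own part (ii) needs $\Aut_1(\C)=1$ to kill exactly this defect. Concretely, if $U_\C=1$ but $1\neq\phi\in\Aut_1(\C)$, the twisted Deligne product $\C\rtimes\Vec_{\mathbb Z}$ of Example \ref{smashpro} has $\widehat\phi_Y=\widehat\phi\neq 1$ for $Y$ a generator of $\mathbb Z$, and no rescaling removes it.

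The missing idea for (i) is that $U_\D=1$ kills the $\C$-side defect. Projecting $Y_p\otimes Y_q$ onto $Y_m$ gives a $\C$-bimodule functor $\C(\phi_p\phi_q)\to\C(\phi_m)$ with underlying object $\Hom(Y_m,Y_p\otimes Y_q)\otimes\mathbf 1$, i.e.\ an object of a twisted Drinfeld center. Lemma \ref{twistedcenter} then gives $\widehat\phi_m^{-1}\widehat\phi_p\widehat\phi_q\in{\rm Im}\,\theta\cong H^2(U_\C,\k^\times)$ (Proposition \ref{cocyc}). By Lemma \ref{homom} and $U_\D=1$, every $\widehat\phi_Y$ lies in ${\rm Im}\,\theta$, which is trivial because $U_\C=1$.

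The second gap is the one you flag yourself. Once the $\C$-side defect is gone, why does the $\D$-side defect $\mathcal J(\psi_X)$ vanish with no hypothesis on $\D$? Naturality alone, or rescaling by characters of $U_\C$, does not give this. You need the mixed relation of Proposition \ref{phipsi}. Expand $\xi_{Y_p\otimes Y_q,X_i\otimes X_j}$ in two ways via \eqref{xiiden} on the block $\Hom(X_k,X_i\otimes X_j)\otimes\Hom(Y_m,Y_p\otimes Y_q)$. This shows that the defect of $Y\mapsto\mathcal J(\phi_Y)$ along fusion in $\D$ equals the defect of $X\mapsto\mathcal J(\psi_X)$ along fusion in $\C$.

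Once all $\widehat\phi_Y=1$ (by $\Aut_1(\C)=1$ in (ii), by the argument above in (i)), rescale the crossings so that $\mathcal J(\phi_Y)=\id$ for every $Y$. The relation then gives $\mathcal J(\psi_{X_k})=\mathcal J(\psi_{X_j})\mathcal J(\psi_{X_i})$ whenever $X_k\subset X_i\otimes X_j$. Lemma \ref{homom} with $U_\C=1$ then forces $\mathcal J(\psi_X)=\mathcal J(\psi_{\mathbf 1})=\id$, and Corollary \ref{dete} identifies $\E$ with $\C\boxtimes\D$. The paper packages this last step via Proposition \ref{equii}(ii).

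One smaller correction: $X\mapsto Y\otimes X\otimes Y^{-}$ is not an autoequivalence of $\C$ when $Y$ is not invertible. The right object is the bimodule category $\C\otimes Y\simeq\C(\phi_Y)$. Its twisting functor is isomorphic to $\Id_\C$ simply because $X\otimes Y\cong Y\otimes X$, not because some defect vanishes.
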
 
 
 As indicated in \cite{MPP}, Remark 3.27, a classification of twisted Deligne products of fusion categories in principle allows one to classify all their exact factorizations in group-theoretical terms using the extension theory of \cite{ENO}. In fact, for this one doesn't even need a full classification of twisted Deligne products: Theorem \ref{maint}(i) is already sufficient. Namely, let $\C_{\rm ad}$ denote the adjoint subcategory of $\C$ (\cite{EGNO}, Subsection 4.14). In Corollary 3.24 and Proposition 3.25 of \cite{MPP} it is shown that 
if $\E=\C\bullet \D$ is an exact factorization of fusion categories then $\E_{\rm ad}=\C_{\rm ad}\bullet \D_{\rm ad}$ is a twisted Deligne product. The category $\E$ can then be recovered from $\E_{\rm ad}$ as an extension of $\E_{\rm ad}$ by the universal grading group $U_\mathcal E$, and such extensions can in principle be classified as in \cite{ENO} if one knows the Brauer-Picard groupoid of $\E_{\rm ad}$. 

Now, we can replace $\C,\D$ with $\C_{\rm ad},\D_{\rm ad}$ and iterate this process until we reach a situation when $\C=\C_{\rm ad}$, $\D=\D_{\rm ad}$, i.e., $U_\C=U_\D=1$ (this must eventually happen because the Frobenius-Perron dimension of $\E$ decreases by an integer factor $|U_\E|>1$ at each step).
 But in this case the twisted Deligne product is the ordinary one by Theorem \ref{maint}(i). Thus this provides a recursive group-theoretical classification of exact factorizations of fusion categories, in particular of their twisted Deligne products.

Admittedly, this classification is a bit implicit. After all, computation of Brauer-Picard groups of Deligne products is a rather nontrivial problem, as is classifying group extensions of a given category, and the answer can be complicated. However, in some cases the answer is very simple. For example, Theorem \ref{maint}(i) 
and the results of \cite{MPP} imply that if $U_\C=U_\D=1$ then every exact factorization 
$\C\bullet \D$ is the ordinary Deligne product $\C\boxtimes \D$. 

The paper is organized as follows. Section 2 contains preliminaries. In Section 3 we give a partial classification of twisted Deligne products. Finally, in the Appendix we introduce the notion of categorical $n$-cocycles for $n=2,3,4$ and show that they are all pullbacks of group $n$-cocycles on the universal grading group of the underlying based ring. In the case of $4$-cocycles, this answers a question of Johnson-Freyd, Ostrik and Yu. 

{\bf Acknowledgements.} In this paper, especially in the Appendix, we collaborated with ChatGPT 5.5 Pro. The work of P.E. was supported by the NSF grants DMS-2001318 and DMS-2502467. 
The work of D.N.\ was supported  by  the  National  Science  Foundation  under  Grant No.\ DMS-2302267.

\section{Preliminaries} 

\subsection{The universal grading group} Let $R$ be a based ring with basis $I$  (\cite{EGNO}, Subsection 3.1). Then $R$ has partial order: $a\le b$ if $b-a$ has non-negative coefficients.

Introduce a relation on $I$ by setting $x\sim y$ if there exist $u_1,...,u_n\in I$ such that $x,y\le u_1...u_n$. 
This is an equivalence relation: if $x\sim y$ and $y\sim z$ 
then $x,y\le u_1...u_n$ and $y,z\le v_1...v_m$, hence
$$
x\le xy^*y\le u_1...u_ny^*v_1...v_m,\ z\le yy^*z\le u_1...u_ny^*v_1...v_m.
$$
Let $U(R):=I/\sim$ and denote the equivalence class of $x\in I$ by $\deg x$. 
Thus $I=\sqcup_{g\in U(R)}I_g$, where $I_g$ is the corresponding equivalence class in $I$. It is clear that if $x,y,z\in I$ 
and $x\sim y$, then for any $v,w\in I$ with $v\le xz$, $w\le yz$ or $v\le zx$, $w\le zy$, we have $v\sim w$.
Thus we have a well-defined associative operation on $U(R)$ given by $\deg x\cdot \deg y=\deg z$ if $z\le xy$. Moreover, $1=\deg\bold 1$ is a unit for this operation. Finally, 
if $x\sim y$ then $x^*\sim y^*$, so for all $g\in U(R)$ there is a well-defined 
inverse $g^{-1}=g^*$. Thus $U(R)$ is a group. It is called the {\it universal grading group} of $R$
 (\cite[3.2]{GN}, \cite[Subsection 4.14]{EGNO}). Thus the based ring $R$ has an $U(R)$-grading: $R=\oplus_{g\in U(R)}R_g$, where $R_g:=\Bbb ZI_g$. 
 
The group $U(R)$ has the following universal property. 

\begin{lemma}\label{homom}  Let $G$ be a group and $f : I\to G$ a function 
such that $f(y)=f(x)f(z)$ whenever $y\le xz$. Then there is a unique homomorphism $\overline{f}: U(R)\to G$ such that $f(x)=\overline{f}(\deg x)$.
\end{lemma}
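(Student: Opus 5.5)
The plan is to show that $f$ is constant on each equivalence class $I_g$. Then $\overline{f}(g):=f(x)$ for any $x\in I_g$ is well defined, and it remains only to check that $\overline f$ is a homomorphism and is unique.

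\textbf{Step 1: behaviour on iterated products.} First note that $f(\bold 1)=1$. Indeed, $\bold 1\le \bold 1\cdot\bold 1$, so $f(\bold 1)=f(\bold 1)^2$. By induction on $n$ I will show that if $y\le u_1\cdots u_n$ with $u_i\in I$, then $f(y)=f(u_1)\cdots f(u_n)$. The case $n=1$ is trivial, since $y\le u_1$ forces $y=u_1$, and $n=2$ is the hypothesis. For the inductive step, suppose $y\le u_1\cdots u_{n-1}u_n$. Expand $u_1\cdots u_{n-1}=\sum_{w\in I} c_w w$ with $c_w\ge 0$, using that structure constants are nonnegative. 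Then $u_1\cdots u_n=\sum_w c_w\, w u_n$, so there is some $w$ with $c_w>0$ and $y\le wu_n$. By the hypothesis $f(y)=f(w)f(u_n)$, and by induction $f(w)=f(u_1)\cdots f(u_{n-1})$. Consequently, if $x,y\le u_1\cdots u_n$, then $f(x)=f(y)$, so $f$ is constant on the classes of $\sim$. This step is the main point, and the only subtlety in it is the positivity argument that extracts a single basis element $w$.

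\textbf{Step 2: homomorphism and uniqueness.} Let $g=\deg x$ and $h=\deg y$. Then $gh=\deg z$ for any $z\le xy$, and such a $z$ exists because $xy\neq 0$ in a based ring: $\bold 1\le xy\,y^*x^*$. Hence $\overline f(gh)=f(z)=f(x)f(y)=\overline f(g)\overline f(h)$. Uniqueness holds because every $g\in U(R)$ is of the form $\deg x$, so the condition $f(x)=\overline f(\deg x)$ determines $\overline f$ completely.
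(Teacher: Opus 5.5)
Your proposal is correct and follows essentially the same route as the paper: an induction on the length $n$ of $u_1\cdots u_n$, peeling off the last factor $u_n$ and using positivity to find a basis element $w\le u_1\cdots u_{n-1}$ with $y\le wu_n$. The only difference is that you prove the slightly stronger formula $f(y)=f(u_1)\cdots f(u_n)$ and verify the homomorphism and uniqueness properties explicitly, which the paper leaves implicit.
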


\begin{proof} Let $x,y\le u_1\cdots u_n$. We prove that $f(x)=f(y)$ by induction in $n$. 
The base $n=1$ is trivial. Let $n>1$. Then there exist $x',y'\le u_1...u_{n-1}$ such that $x\le x'u_n,y\le y'u_n$. 
Thus $f(x')=f(y')$ by the induction assumption, so by the hypothesis $f(x)=f(x')f(u_n)=f(y')f(u_n)=f(y)$. Hence $f$ is constant on $\sim$-classes and descends uniquely to $U(R)$.
\end{proof}

In particular, faithful $G$-gradings on $R$ correspond to surjective homomorphisms
$U(R)\to G$. 

If $\C$ is a semisimple tensor category and $R={\rm Gr}(\C)$ is the Grothendieck ring of $\C$, 
then we will call $U(R)$ the {\it universal grading group of $\C$} and denote it by $U_\C$. 
In this case we have $\C=\oplus_{g\in U_\C}\C_g$, where $\C_g$ is spanned by simple objects $X$ with 
$\deg X=g$. Thus faithful $G$-gradings on $\C$ correspond to surjective homomorphisms 
$U_\C\to G$. 

\subsection{The group ${\rm Aut}_1(\C)$}
Let $\C$ be a semisimple tensor category over $\bold k$ with the set of simple objects ${\rm Irr}(\C)$. Let 
$\Aut(\C)$ be the group of tensor autoequivalences of $\C$ up to isomorphism. For a tensor autoequivalence $\phi:\C\to \C$, denote its class in $\Aut(\C)$ by $\widehat\phi$. Let $\Aut_1(\C)\subset \Aut(\C)$ be the subgroup of autoequivalences acting trivially on ${\rm Gr}(\C)$, i.e., 
tensor structures on the identity functor of $\C$ (after choosing isomorphisms $X\cong \phi(X)$ for simple $X\in \C$).

\begin{remark}\label{softaut}
The group $\Aut_1(\C)$ has appeared in the literature under several names. It was studied categorically by Davydov \cite{D1,D2} as the group of twisted forms, or tensor deformations, of the identity functor. In the Hopf algebra setting, the corresponding cohomology group was introduced by Schauenburg \cite{S}; it was subsequently called the second \emph{lazy cohomology} group and systematically studied by Bichon and Carnovale \cite{BC}. More precisely, for a finite-dimensional Hopf algebra $H$, the group $\Aut_1(\Rep(H))$ is identified with the group of invariant twists on $H$ modulo gauge equivalence, or, equivalently, with the lazy cohomology group of $H^*$. Davydov later called its elements \emph{soft} tensor autoequivalences \cite{D3}. Guillot and Kassel \cite{GK} showed that this group need not be abelian and gave such examples already for $\C=\Rep(G)$, where $G$ is a finite group (\cite{GK}, Proposition 1.4).
\end{remark}

We have a natural homomorphism
$$
\theta: H^2(U_\C,\bold k^\times)\to \Aut_1(\C)
$$ 
given by $\theta([\beta])|_{W_1\otimes W_2}=\beta(\deg W_1,\deg W_2)$, where $\beta$ is a normalized 2-cocycle on $U_\C$.  

\begin{lemma}\label{inje}  $\theta$ is injective. Thus it identifies $H^2(U_\C,\bold k^\times)$ 
with a central subgroup of $\Aut_1(\C)$.
\end{lemma}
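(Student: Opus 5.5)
The plan is to check that $\theta$ is a well-defined homomorphism, prove injectivity by turning a trivialization of $\theta([\beta])$ into a splitting of the central extension defined by $\beta$ (using Lemma \ref{homom}), and then observe that centrality is automatic because $\theta([\beta])$ acts by scalars.

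\textbf{Well-definedness and the homomorphism property.} First I will check that $\theta$ is well defined. For a normalized 2-cocycle $\beta$ on $U_\C$, put $J^\beta_{W_1,W_2}=\beta(\deg W_1,\deg W_2)\,\id_{W_1\otimes W_2}$ on homogeneous objects and extend additively. The cocycle condition for $\beta$ is exactly the compatibility of $J^\beta$ with the associativity constraint, since $J^\beta$ acts on homogeneous components by scalars. If $\beta=\partial\gamma$, i.e. $\beta(g,h)=\gamma(g)\gamma(h)/\gamma(gh)$, then the natural automorphism $\eta_W=\gamma(\deg W)\id_W$ of $\Id_\C$ is an isomorphism of tensor functors $(\Id,J^\beta)\cong(\Id,\id)$. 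Composition of the tensor functors $(\Id,J)$ and $(\Id,J')$ is $(\Id,J\circ J')$, so $\theta$ is a homomorphism.

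\textbf{Injectivity.} Suppose $\theta([\beta])=1$. Then there is a natural automorphism $\eta$ of $\Id_\C$ with $J^\beta_{X,Y}=\eta_{X\otimes Y}^{-1}(\eta_X\otimes\eta_Y)$ (with the appropriate convention). On simple objects $\eta_X=a(X)\id_X$ with $a(X)\in\bold k^\times$. Restricting to a simple summand $Z\subset X\otimes Y$ gives
$$\beta(\deg X,\deg Y)=\frac{a(X)a(Y)}{a(Z)}\quad\text{whenever } Z\le XY.$$
Let $\widetilde U$ be the central extension $U_\C\times\bold k^\times$ with multiplication $(g,s)(h,t)=(gh,\beta(g,h)st)$. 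Define $f:I\to\widetilde U$ on the simple objects by $f(X)=(\deg X,a(X)^{\epsilon})$, with the sign $\epsilon=\pm1$ chosen so that the displayed identity says precisely $f(Z)=f(X)f(Y)$ whenever $Z\le XY$. By Lemma \ref{homom}, $f$ descends to a homomorphism $\overline f:U_\C\to\widetilde U$. Composing $\overline f$ with the projection $\widetilde U\to U_\C$ gives the identity, since $\deg$ is surjective onto $U_\C$. So the extension splits. Writing $\overline f(g)=(g,c(g))$, the homomorphism property reads $\beta(g,h)=c(gh)/(c(g)c(h))$, so $\beta$ is a coboundary.

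\textbf{Centrality.} Every element of $\Aut_1(\C)$ is represented by some $(\Id,J)$ once isomorphisms $X\cong\phi(X)$ are chosen. The element $\theta([\beta])$ acts on each $W_1\otimes W_2$ with $W_i$ homogeneous by a scalar. Hence $J\circ J^\beta=J^\beta\circ J$ as natural transformations, so $\theta([\beta])$ commutes with every element of $\Aut_1(\C)$.

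I expect the only delicate point to be the injectivity step: fixing conventions and signs so that the scalar identity becomes exactly the multiplicativity hypothesis of Lemma \ref{homom}. The rest is formal.
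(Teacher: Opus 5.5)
Your proof is correct and has the same outline as the paper's. A trivialization of $\theta([\beta])$ gives scalars $a(X)$ with $\beta(\deg X,\deg Y)=a(X)a(Y)/a(Z)$ for $Z\subset X\otimes Y$. Showing that $a$ is constant on degree classes then makes $\beta$ a coboundary. Centrality holds because $\theta([\beta])$ acts by a scalar on each $W_1\otimes W_2$.

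The only real difference is in the constancy step. The paper argues directly that $a(X)^{-1}\prod_i a(W_i)$ depends only on the degrees of the $W_i$ whenever $X\subset W_1\otimes\cdots\otimes W_n$; this amounts to re-running the induction of Lemma~\ref{homom} with $\beta$-twists. You instead absorb that induction into Lemma~\ref{homom} itself by taking the $\beta$-twisted central extension of $U_\C$ by $\k^\times$ as the target group. The coboundary then appears as a splitting of that extension, which is the same device the paper uses with $PGL(V)$ in Lemma~\ref{twistedcenter}. You also check well-definedness and the homomorphism property of $\theta$ explicitly, which the paper leaves implicit.
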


\begin{proof} If $\theta([\beta])=1$ then $\beta(\deg W_1,\deg W_2)=f(W_1)f(W_2)f(W_3)^{-1}$ for each $W_3\subset W_1\otimes W_2$, where $f: {\rm Irr}(\C)\to \bold k^\times$. Thus, if $X$ occurs in $W_1\otimes\cdots\otimes W_n$ then 
$f(X)^{-1}\prod_i f(W_i)$ depends only on the degrees $\deg W_i$ of the tensor factors. So if $X,X'\subset W_1\otimes\cdots\otimes W_n$ then $f(X)=f(X')$. Hence $f(X)=\overline{f}(\deg X)$, where $\overline{f}$ is a function on $U_\C$, and $\beta=d\overline{f}$. Also, the image of $\theta$ acts on each simple summand of $X\otimes Y$ by the scalar $\beta(\deg X,\deg Y)$, so it commutes with any tensor structure on the identity functor, implying that ${\rm Im}\theta$ is central.
\end{proof} 

However, $\theta$ need not be surjective since according to Remark \ref{softaut}, $\Aut_1(\C)$ can be non-abelian. 

\subsection{Twisted Drinfeld center} 

Recall that the {\it Drinfeld center} of a tensor category $\C$ may be defined as 
the category of $\C$-bimodule functors $\C\to \C$. More generally, 
let $\phi: \C\to \C$ be a tensor autoequivalence. Then 
we can define the $\C$-bimodule category $\C(\phi)$ 
which is $\C$ as a left $\C$-module category, while the right $\C$-action is twisted by $\phi$. That is,  
for $Y\in \C(\phi)$ and $X\in \C$, $Y\circ X:=Y\otimes \phi(X)$. 
Now define the {\it twisted Drinfeld center} of $\C$ to be the
category of $\C$-bimodule functors $\C\to \C(\phi)$. Then, if $\psi: \C\to \C$ 
is another tensor autoequivalence, then the
category of $\C$-bimodule functors $\C(\psi)\to \C(\phi)$ is $\Z_{\phi\psi^{-1}}(\C)$. 

Recall that any module endofunctor of $\C$ as a left $\C$-module is right multiplication 
by an object $Z$ of $\C$. Thus in concrete terms, an object 
of $\Z_\phi(\C)$ can be realized as an object $Z$ of $\C$ together with a functorial isomorphism 
$c_X: X\otimes Z\to Z\otimes \phi(X)$, $X\in \C$, satisfying the consistency relation
\begin{equation}\label{conrel}
c_{X\otimes Y}=(\id_Z\otimes \mathcal J_{X,Y})(c_X\otimes\id_{\phi(Y)})(\id_X\otimes c_Y),
\end{equation}
where $\mathcal J_{X,Y}:\phi(X)\otimes\phi(Y)\to \phi(X\otimes Y)$ is the tensor structure of $\phi$ and we suppress associativity isomorphisms (in particular, $c_{\bold 1}=\id_Z$). Namely, a functor $F\in \Z_\phi(\C)$ gives rise to the object $Z_F:= F(\bold 1)$, and conversely, $F_Z(X)=X\otimes Z$.

Let $G$ be a group and $\beta: G\times G\to \k^\times$ be a 2-cocycle. Denote by 
$\Rep_\beta(G)$ the category of finite dimensional projective representations $V$
of $G$ with $2$-cocycle $\beta$, i.e. maps $\rho: G\to GL(V)$ 
such that 
$$
\rho(gh)=\rho(g)\rho(h)\beta(g,h),\ g,h\in G.
$$ 

\begin{lemma}\label{twistedcenter} Let $\phi:\C\to \C$ be a tensor autoequivalence and let \linebreak $\theta: H^2(U_\C,\k^\times)\to {\rm Aut}_1(\C)$ be the map of Lemma \ref{inje}. 
Then there exists nonzero $Z\in \Z_\phi(\C)$ such that the underlying object of $Z$ is $V\otimes \bold 1$ for a finite dimensional $\k$-vector space $V$ if and only if 
$$
\widehat{\phi}=\theta([\beta])\in \Aut_1(\C)\subset \Aut(\C)
$$ 
for some 2-cocycle $\beta$ on $U_\C$ with values in $\k^\times$ for which $\Rep_\beta(U_\C)\ne 0$. Moreover, $\beta$ is the same for all $Z$ up to coboundaries and can be chosen so that $\beta^{\dim V}=1$. Finally, in this case the category of such $Z$ is naturally equivalent to 
$\Rep_\beta(U_\C)$.
\end{lemma}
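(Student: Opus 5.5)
The plan is to unpack the structure of an object $Z\in \Z_\phi(\C)$ with underlying object $V\otimes \bold 1$. In that case the half-braiding $c_X: X\otimes V\to V\otimes \phi(X)$ is a natural isomorphism between $V\otimes X$ and $V\otimes\phi(X)$. First I will show that $\phi(X)\cong X$ for every simple $X$, so that $\widehat\phi\in\Aut_1(\C)$. Since $\Hom(X,\phi(X))\otimes \Hom(V,V)\cong\Hom(X\otimes V, V\otimes \phi(X))$ is nonzero, this follows directly. Fixing isomorphisms $\phi(X)\cong X$ on simples, I may take $\phi$ to be the identity functor with a tensor structure $\mathcal J$. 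Then $c_X$ becomes an element $\rho(X)\in GL(V)$ for each simple $X$, and it extends additively to all objects.

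Next I will analyze the consistency relation \eqref{conrel}. For simples $X,Y$ and each simple $W\subset X\otimes Y$, restricting \eqref{conrel} to the $W$-component, and using that $\mathcal J_{X,Y}$ acts on $\Hom(W,X\otimes Y)$ by some operator, gives the following: $\rho(W)\otimes\id$ equals $(\rho(X)\rho(Y))\otimes \mathcal J_{X,Y}^W$ on $V\otimes\Hom(W,X\otimes Y)$. Comparing the two tensor factors forces $\mathcal J_{X,Y}^W$ to be a scalar $\lambda(X,Y,W)$, and forces $\rho(W)=\lambda\,\rho(X)\rho(Y)$.

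The key step, which I expect to be the main obstacle, is to show that $\rho$ factors through $\deg$ up to scalars. The idea is to apply Lemma~\ref{homom} to the composite $f:\mathrm{Irr}(\C)\to PGL(V)$. The relation above gives $f(W)=f(X)f(Y)$ whenever $W\le XY$, so $f=\overline f\circ\deg$ with $\overline f:U_\C\to PGL(V)$ a homomorphism. I then choose lifts $\tilde\rho(g)\in GL(V)$, which yields a projective representation with some cocycle $\beta^{-1}$ (or $\beta$, depending on conventions). I write $\rho(X)=\mu(X)\tilde\rho(\deg X)$ with $\mu(X)\in\k^\times$. Plugging this in shows that $\mathcal J^W_{X,Y}=\mu(W)\mu(X)^{-1}\mu(Y)^{-1}\beta(\deg X,\deg Y)^{\pm1}$. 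Rescaling the chosen isomorphisms $\phi(X)\cong X$ by $\mu$ removes the coboundary, so $\widehat\phi=\theta([\beta])$ and $V$ is a $\beta$-projective representation. The scalar relation needs $V\neq 0$, and Lemma~\ref{inje} gives uniqueness of $[\beta]$ independently of $Z$. Taking determinants gives $\beta^{\dim V}$ as a coboundary of $\det\tilde\rho$, so I can rescale to get $\beta^{\dim V}=1$. One subtlety is that $U_\C$ may be infinite and $V$ need not be irreducible, but nothing above uses finiteness.

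Conversely, given $\widehat\phi=\theta([\beta])$ and a $\beta$-representation $V$, I set $c_X=\rho(\deg X)$ on simple $X$, and \eqref{conrel} holds by the computation above read backwards. For the equivalence of categories, morphisms in $\Z_\phi(\C)$ between such $Z$'s are linear maps $V\to V'$ commuting with all $c_X$, that is, with $\tilde\rho(g)$. This gives the functor to $\Rep_\beta(U_\C)$, which is fully faithful and, by the converse construction, essentially surjective.
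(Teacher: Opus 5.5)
Your proposal is correct and follows essentially the same route as the paper: you identify $\phi$ with the identity via the isomorphisms $c_X$, read off from \eqref{conrel} that $\mathcal J$ acts by scalars and that $X\mapsto [c_X]\in PGL(V)$ is multiplicative, and factor this map through $U_\C$ by Lemma~\ref{homom}; you then lift and rescale the identifications $\phi(X)\cong X$ to obtain $\widehat\phi=\theta([\beta])$, and reverse the computation for the converse and for the equivalence with $\Rep_\beta(U_\C)$. The only cosmetic difference is in getting $\beta^{\dim V}=1$: the paper chooses lifts in $SL(V)$ from the start, while you lift to $GL(V)$ and then normalize determinants, which amounts to the same thing.
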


\begin{proof} For a simple $X\in \C$ the isomorphism 
$c_X:X\otimes Z\to Z\otimes\phi(X)$ 
gives an isomorphism $V\otimes X\cong V\otimes\phi(X)$. Hence $\phi(X)\cong X$.  Choose such identifications for all simple $X$ and let $\mathcal J$ denote the resulting tensor structure on $\Id_\C$.
Then $c_X$ becomes an operator $\zeta_X\in GL(V)$ for every simple $X$.
For a simple summand $W_3\subset W_1\otimes W_2$, \eqref{conrel} gives
\begin{equation}\label{concon}
\zeta_{W_3}\otimes \id=\zeta_{W_1}\zeta_{W_2}\otimes \gamma(W_1,W_2;W_3)
\end{equation}
on $V\otimes \Hom(W_3,W_1\otimes W_2)$, where $\gamma(W_1,W_2;W_3)$ is the operator by which $\mathcal J_{W_1,W_2}$ acts on $\Hom(W_3,W_1\otimes W_2)$.  It follows that $\gamma(W_1,W_2;W_3)$ is a scalar.

For $g\in GL(V)$, let $[g]$ be the image of $g$ in $PGL(V)$. Since $\gamma$ is a scalar, we have 
\[
[\zeta_{W_3}]=[\zeta_{W_1}][\zeta_{W_2}]
\]
whenever $W_3\subset W_1\otimes W_2$. By Lemma~\ref{homom}, it follows that the assignment $X\mapsto[\zeta_X]$ factors through a homomorphism $U_\C\to PGL(V)$.  For $g\in U_\C$ choose lifts $\rho(g)\in SL(V)$ of this projective representation with $\rho(1)=\id_V$ (which is possible since $\k$ is algebraically closed) and define a 2-cocycle $\beta$ on $U_\C$ by
\[
\rho(gh)=\rho(g)\rho(h)\beta(g,h),\ g,h\in U_\C.
\]
Taking determinants gives $\beta(g,h)^{\dim V}=1$ for all $g,h$.

After rescaling the chosen identifications $\phi(X)\cong X$ by some scalars $\lambda_X\in \k^\times$, we may assume that $\zeta_X=\rho(\deg X)$.
Then \eqref{concon} implies
\[
\gamma(W_1,W_2;W_3)=\beta(\deg W_1,\deg W_2)
\]
whenever $W_3\subset W_1\otimes W_2$.  Hence the tensor structure of $\phi$ is exactly the image of $[\beta]$ under $\theta$, i.e. $\widehat\phi=\theta([\beta])$. In particular, if $Z,Z'\in \mathcal Z_\phi(\C)$ 
are nonzero and give rise to cocycles $\beta,\beta'$ then $[\beta]=[\beta']$. 

Conversely, suppose that, after identifying $\phi$ with the identity on objects, $\mathcal J_{W_1,W_2}$ acts on $W_1\otimes W_2$ by $\beta(\deg W_1,\deg W_2)$.  If $V$ is a projective representation of $U_\C$ with 2-cocycle $\beta$ (i.e. the action $\rho$ of $U_\C$ on $V$ satisfies $\rho(gh)=\rho(g)\rho(h)\beta(g,h)$), set $Z=V\otimes\bold 1$ and define $c_X$ as $\rho(\deg X)$ (upon the chosen identification $\phi(X)\cong X$).  Then the cocycle identity for $\beta$ and the formula for $\mathcal J$ give precisely the consistency relation \eqref{conrel}.  Morphisms between such objects are exactly linear maps intertwining all $\rho(g)$, so the resulting subcategory of $\Z_\phi(\C)$ is equivalent to the category of projective representations of $U_\C$ with 2-cocycle $\beta$.
\end{proof}

\subsection{The K\"unneth theorem}

Let $G,H$ be groups and let $A$ be an abelian group, regarded as a trivial $G\times H$-module.  For $n>0$, define restriction maps 
$$
\xi_{0,n}: H^n(G\times H,A)\to H^n(G,A),\ \xi_{n,0}: H^n(G\times H,A)\to H^n(H,A).
$$ 
Let 
\[
\Gamma_n(G,H,A):=\ker(\xi_{0,n}\oplus \xi_{n,0}).
\]
The two projections $G\times H\to G,H$ give canonical sections of $\xi_{n,0},\xi_{0,n}$,
 so for $n>0$ one has a canonical decomposition
\[
H^n(G\times H,A)\cong H^n(G,A)\oplus H^n(H,A)\oplus \Gamma_n(G,H,A).
\]

The split extension
\[
1\to G\to G\times H\to H\to 1
\]
gives a spectral sequence with the $E_2$-page
\[
E_2^{p,q}=H^p(H,H^q(G,A))\Rightarrow H^{p+q}(G\times H,A),
\]
which is a special case of the Lyndon--Hochschild--Serre spectral sequence for the cohomology of an extension
$L$ of $H$ by $G$. Namely, it is the spectral sequence of a filtered cochain complex computing $H^*(L,A)$, and its $E_\infty$-page is the associated graded object of the induced filtration on cohomology. 

In our case, when $L=G\times H$ is a direct product, the Lyndon--Hochschild--Serre spectral sequence degenerates at $E_2$.
 Thus, 
after removing the two restriction summands, we obtain a natural filtration on $\Gamma_n(G,H,A)$ with associated graded pieces
\[
H^p(H,H^{n-p}(G,A)),\  1\le p\le n-1.
\]
Equivalently, there are natural surjective maps, defined successively on kernels,
\[
\xi_{1,n-1}:\Gamma_n(G,H,A)\to H^1(H,H^{n-1}(G,A)),
\]
\[
\xi_{2,n-2}:\ker\xi_{1,n-1}\to H^2(H,H^{n-2}(G,A)),
\]
and so on. Namely, the Alexander--Whitney and Eilenberg--Zilber comparison maps identify the filtered bar complex with the tensor product of the bar complexes for $G$ and $H$, so that this filtration is identified with the standard K\"unneth filtration, and identify the maps $\xi_{i,n-i}$ with the corresponding edge maps. See \cite[Chapter VII, Sections 6--7]{Brown} for more details.

\begin{example}\label{ex1} Recall that if a group $L$ acts trivially on a group $M$ then $H^1(L,M)=\Hom(L,M)$. 
Therefore the groups $\Gamma_2$ and $\Gamma_3$ have the following structure. 

1. One has \scriptsize
$$
\Gamma_2(G,H,A)=\Hom(H,\Hom(G,A))=\Hom(G,\Hom(H,A))=\Hom(G_{\rm ab}\otimes H_{\rm ab},A),
$$ \normalsize
where the subscript denotes abelianization. 

2. There are natural short exact sequences
\[
0\to H^2(H,\Hom(G,A))\to \Gamma_3(G,H,A)\xrightarrow{\tau_H} \Hom(H,H^2(G,A))\to 0,
\]
and, after interchanging $G$ and $H$,
\[
0\to H^2(G,\Hom(H,A))\to \Gamma_3(G,H,A)\xrightarrow{\tau_G} \Hom(G,H^2(H,A))\to 0.
\]
The kernel of
\[
\tau_G\oplus\tau_H:\Gamma_3(G,H,A)\to \Hom(G,H^2(H,A))\oplus \Hom(H,H^2(G,A))
\]
may be described as the kernel of the map
\[
\eta: H^2(G,\Hom(H,A))\to \Hom(H,H^2(G,A))
\]
obtained by restricting $\tau_H$ to $\ker\tau_G=H^2(G,\Hom(H,A))$.  On cocycles, this map is represented by transposing the variables:
\[
\eta(\gamma)(h)(g_1,g_2)=\gamma(g_1,g_2)(h).
\]
\end{example}

\section{Towards classification of twisted Deligne products} 

\subsection{Data attached to a twisted Deligne product} 

Let $\E=\C\bullet \D$ be a twisted Deligne product. Then every simple object $Y$ of $\D$ defines a $\C$-bimodule category $\C_Y:=\C\otimes Y$, which is canonically equivalent to $\C$ as a left module via $X\mapsto X\otimes Y$, $X\in \C$. Thus, the right action of $\C$ on $\C_Y$ is given by a tensor autoequivalence $\phi_Y$ of $\C$, i.e., $\C_Y\cong \C(\phi_Y)$, the category $\C$ with the usual left action of $\C$ and the right action twisted by $\phi_Y$. Moreover, $\phi_Y$ acts trivially on the objects of $\C$, so it defines an element of $\Aut_1(\C)$, denoted by $\widehat{\phi}_Y$.

In a similar way, for $X\in \C$ we have an autoequivalence $\psi_X: \D\to \D$ 
which gives rise to an element $\widehat{\psi}_X\in \Aut_1(\D)$. 

Let us fix isomorphisms $\xi_{YX}: Y\otimes X\cong X\otimes Y$ for each simple $X\in \C$ and $Y\in \D$ and extend them additively (hence functorially) to arbitrary objects. These isomorphisms 
define an identification of additive functors $\psi_X\cong {\rm Id}_\D$ and $\phi_Y\cong {\rm Id}_\C$ for all $X,Y$. So we may assume that 
$\psi_X={\rm Id}_\D$ and $\phi_Y={\rm Id}_\C$ as additive functors.

Let $\mathcal J(\phi_Y),\mathcal J(\psi_X)$ be the tensor structures on the identity functors representing $\phi_Y,\psi_X$. 
Then, suppressing the associativity morphisms, we have
\begin{equation} \label{xiiden}
\begin{aligned} 
\xi_{Y,W_1\otimes W_2}=\mathcal J(\phi_Y)_{W_1W_2}\xi_{YW_2}\xi_{YW_1},\\
\xi_{V_1\otimes V_2,X}=\xi_{V_1X}\xi_{V_2X}\mathcal J(\psi_X)_{V_1V_2}.
\end{aligned}
\end{equation}
Indeed, the first identity in \eqref{xiiden} is the commutativity of
\[
\begin{tikzcd}[column sep=large,row sep=large]
Y\otimes (W_1\otimes W_2) \arrow[rr,"{\xi_{Y,W_1\otimes W_2}}"] \arrow[d,"{\alpha^{-1}}"'] && (W_1\otimes W_2)\otimes Y \\
(Y\otimes W_1)\otimes W_2 \arrow[r,"{\xi_{YW_1}\otimes\id}"'] & (W_1\otimes Y)\otimes W_2 \arrow[d,"{\alpha}"] & \\
& W_1\otimes (Y\otimes W_2) \arrow[r,"{\id\otimes\xi_{YW_2}}"'] & W_1\otimes (W_2\otimes Y) \arrow[uu,"{(\mathcal J(\phi_Y)_{W_1,W_2}\otimes\id_Y)\alpha^{-1}}"']
\end{tikzcd}
\]
and the second identity is the commutativity of
\[
\begin{tikzcd}[column sep=large,row sep=large]
(V_1\otimes V_2)\otimes X \arrow[rr,"{\xi_{V_1\otimes V_2,X}}"] \arrow[d,"{\mathcal J(\psi_X)_{V_1,V_2}\otimes\id_X}"'] && X\otimes (V_1\otimes V_2) \\
(V_1\otimes V_2)\otimes X \arrow[r,"{\alpha}"'] & V_1\otimes (V_2\otimes X) \arrow[d,"{\id\otimes\xi_{V_2X}}"] & \\
& V_1\otimes (X\otimes V_2) \arrow[d,"{\alpha^{-1}}"] & \\
& (V_1\otimes X)\otimes V_2 \arrow[r,"{\xi_{V_1X}\otimes\id}"'] & (X\otimes V_1)\otimes V_2 \arrow[uuu,"{\alpha}"'].
\end{tikzcd}
\]

This shows that if we renormalize $\xi_{YX}$ by scalars $\lambda_{YX}\in \bold k^\times$ 
then $\mathcal J(\phi_Y)$ and $\mathcal J(\psi_X)$ change as follows: 
$\mathcal J(\phi_Y)\mapsto \mathcal J'(\phi_Y)$ with 
\begin{equation}\label{tran1} 
\mathcal J'(\phi_Y)_{\Hom(W_3,W_1\otimes W_2)}=\mathcal J(\phi_Y)_{\Hom(W_3,W_1\otimes W_2)}\lambda_{YW_3}\lambda_{YW_1}^{-1}\lambda_{YW_2}^{-1}
\end{equation}
and 
$\mathcal J(\psi_X)\mapsto \mathcal J'(\psi_X)$ with 
\begin{equation}\label{tran2} 
\mathcal J'(\psi_X)_{\Hom(V_3,V_1\otimes V_2)}=\mathcal J(\psi_X)_{\Hom(V_3,V_1\otimes V_2)}\lambda_{V_3X}\lambda_{V_1X}^{-1}\lambda_{V_2X}^{-1}.
\end{equation}

The following proposition gives the universal property of the twisted Deligne product $\E=\C\bullet \D$. 

\begin{proposition}\label{tffromdp} Let $\B$ be a tensor category. Then a tensor functor $F: \E\to \B$ is determined by the following data: 

(1) A tensor functor $F_1: \C\to \B$; 

(2) A tensor functor $F_2: \D\to \B$; 

(3) Isomorphisms 
$$
b_{YX}: F_2(Y)\otimes F_1(X)\cong F_1(X)\otimes F_2(Y),\ X\in {\rm Irr}(\C),Y\in {\rm Irr}(\D),
$$ 
 such that after extending them additively (hence functorially) to all objects and suppressing the associativity and tensor structure morphisms on $F_1,F_2$, we have  
$$
b_{Y,W_1\otimes W_2}=\mathcal J(\phi_Y)_{W_1W_2}b_{YW_2}b_{YW_1},
$$
$$
b_{V_1\otimes V_2,X}=b_{V_1X}b_{V_2X}\mathcal J(\psi_X)_{V_1V_2}.
$$
\end{proposition}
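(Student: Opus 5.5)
Two directions have to be checked: a tensor functor $F$ yields data (1)--(3), and data (1)--(3) assemble into a tensor functor $F$ with $F|_\C\cong F_1$, $F|_\D\cong F_2$.

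\emph{From $F$ to the data.} Given a tensor functor $F:\E\to\B$, set $F_1:=F|_\C$ and $F_2:=F|_\D$. Put
$$b_{YX}:=J_{X,Y}\circ F(\xi_{YX})\circ J_{Y,X}^{-1},$$
where $J$ is the tensor structure of $F$. Applying $F$ to the identities \eqref{xiiden} and using the monoidality of $J$ gives exactly the two relations in (3). Note that $\mathcal J(\phi_Y)$ is a morphism in $\C$, so $F$ carries it to $F_1(\mathcal J(\phi_Y))$, which is what "suppressing tensor structures" means.

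\emph{From the data to $F$: objects and morphisms.} Since $\C\boxtimes\D\to\E$, $X\boxtimes Y\mapsto X\otimes Y$, is an abelian equivalence, I define $F$ on objects of the form $X\otimes Y$ by
$$F(X\otimes Y):=F_1(X)\otimes F_2(Y).$$
It extends to all of $\E$ additively, and on morphisms through $\Hom_\E(X\otimes Y,X'\otimes Y')=\Hom_\C(X,X')\otimes\Hom_\D(Y,Y')$. This is the Deligne product functor $F_1\boxtimes F_2$ composed with $\otimes_\B$.

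\emph{From the data to $F$: the tensor structure.} On $(X_1\otimes Y_1)\otimes(X_2\otimes Y_2)$ the tensor structure is built from three ingredients:
\begin{itemize}
\item the associativity constraints;
\item the braiding $b_{Y_1X_2}$, which moves $F_2(Y_1)$ past $F_1(X_2)$;
\item the tensor structures of $F_1$ and $F_2$.
\end{itemize}
This lands in $F_1(X_1\otimes X_2)\otimes F_2(Y_1\otimes Y_2)$. To identify the latter with $F$ of the product in $\E$, I transport via $\xi_{Y_1X_2}$: on the $\E$ side the product $(X_1Y_1)(X_2Y_2)$ is identified with $(X_1X_2)(Y_1Y_2)$ using $\xi_{Y_1X_2}$. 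The isomorphism $F(A)\otimes F(B)\to F(A\otimes B)$ is then the composite of this $b$-move with $F$ applied to the inverse of the $\xi$-move.

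\emph{The hexagon axiom.} This reduces to checking, on three factors $X_iY_i$, that two ways of reordering agree. Both sides involve moving $Y_1$ past $X_2X_3$, or $Y_1Y_2$ past $X_3$. Moving $Y_1$ past $X_2$ and then past $X_3$ in $\B$ equals $b_{Y_1,X_2X_3}$ up to $\mathcal J(\phi_{Y_1})$, by the first relation in (3). In $\E$ the same holds for $\xi$ with the same $\mathcal J(\phi_{Y_1})$, by \eqref{xiiden}, so the discrepancies cancel. The second relation handles the other bracketing in the same way.

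\emph{Uniqueness.} This follows because $F$ is determined up to isomorphism on $\C\boxtimes\D$, and its monoidal structure on $\C$, on $\D$ and on the crossing $Y\otimes X$ determines it everywhere.

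\emph{Main obstacle.} The main work is the bookkeeping in the hexagon verification: keeping the associators of $\E$ and $\B$ and the tensor structures of $F_1,F_2$ consistently suppressed. I would handle it by first using strictification (Mac Lane coherence) for $\E$ and $\B$, and by treating $F_1,F_2$ as strict on simple objects, extended additively.
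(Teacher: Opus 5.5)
Your proposal follows the paper's proof essentially step by step. You transport $F(\xi_{YX})$ to get $b_{YX}$; conversely you set $F(X\otimes Y):=F_1(X)\otimes F_2(Y)$, with tensor structure given by the move $b_{Y_1X_2}$ composed with $F$ of the inverse $\xi_{Y_1X_2}$-move. Coherence then follows because the defects $\mathcal J(\phi_Y)$, $\mathcal J(\psi_X)$ produced in $\B$ by (3) coincide with those produced in $\E$ by \eqref{xiiden}. One small slip: with the paper's convention $J_{X,Y}:F(X)\otimes F(Y)\to F(X\otimes Y)$, your formula should read $b_{YX}=J_{X,Y}^{-1}\circ F(\xi_{YX})\circ J_{Y,X}$.
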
 

\begin{proof} Clearly, $F$ determines $F_1$ and $F_2$, and we have 
a canonical identification $F(X\otimes Y)\cong F_1(X)\otimes F_2(Y)$ for simple $X\in \C,Y\in \D$. 
Also, we may identify $F_2(Y)\otimes F_1(X)=F(Y)\otimes F(X)\cong F(Y\otimes X)$ with
$F(X\otimes Y)$ by the isomorphism $F(\xi_{YX})$. Thus, we obtain an isomorphism 
$b_{YX}: F_2(Y)\otimes F_1(X)\cong F_1(X)\otimes F_2(Y)$. 
It is easy to check that it satisfies the identities of (3). 

Conversely, given the data (1)-(3), we can define $F$ by $F(X\otimes Y):=F_1(X)\otimes F_2(Y)$ for simple $X\in \C$, $Y\in \D$ 
and define the tensor structure on $F$ by $\mathcal J_{W_1\otimes V_1,W_2\otimes V_2}=b_{V_1W_2}$ 
(again suppressing the associativity morphisms and tensor structure morphisms on $F_1,F_2$).
Here we identify $V_1\otimes W_2$ with $W_2\otimes V_1$ using the isomorphism $\xi_{V_1W_2}$.  
Then it is easy to check that conditions (3) ensure that $\mathcal J$ satisfies the tensor structure diagram.
Namely, it is sufficient to check this on the simple objects $X_a\otimes Y_b$. Then the faces involving only $\C$ or only $\D$ are exactly the tensor-functor axioms for $F_1$ and $F_2$.  The face with one $Y$ and two $X$'s is precisely the first identity in (3), and the face with two $Y$'s and one $X$ is precisely the second identity in (3).  The remaining mixed cases reduce to these by functoriality and the associativity in the original product $\E$, equivalently by \eqref{xiiden}. 
This proves the proposition.  
\end{proof}

\begin{corollary} \label{dete} 
The structure of the twisted Deligne product $\E=\C\bullet\D$ is completely determined by the functors $\phi_Y$ and $\psi_X$ (i.e., by $\mathcal J(\phi_Y)$ and $\mathcal J(\psi_X)$) for all simple $X\in \C$ and $Y\in \D$. Moreover, the tensor structures related by the transformation defined by \eqref{tran1},\eqref{tran2} give rise to the same twisted Deligne product (up to a tensor equivalence which is the 
identity on $\C$ and $\D$). 
\end{corollary}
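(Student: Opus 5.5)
The plan is to derive both claims of Corollary \ref{dete} directly from the universal property in Proposition \ref{tffromdp}, using it twice: once to produce comparison functors, and once to see that these are equivalences.

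\emph{First claim.} Let $\E=\C\bullet\D$ and $\E'=\C\bullet\D$ be two twisted Deligne products. Choose isomorphisms $\xi_{YX}$ in $\E$ and $\xi'_{YX}$ in $\E'$ for which the resulting tensor structures coincide, i.e. $\mathcal J(\phi_Y)=\mathcal J'(\phi_Y)$ and $\mathcal J(\psi_X)=\mathcal J'(\psi_X)$ for all simple $X,Y$. Apply Proposition \ref{tffromdp} with $\B=\E'$ and the following data:
\begin{itemize}
\item $F_1\colon\C\hookrightarrow\E'$ and $F_2\colon\D\hookrightarrow\E'$ are the given embeddings;
\item $b_{YX}:=\xi'_{YX}$.
\end{itemize}
The two identities required in part (3) of the proposition are exactly \eqref{xiiden} written in $\E'$. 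Since the $\mathcal J$'s agree, these are the identities with $\mathcal J(\phi_Y)$ and $\mathcal J(\psi_X)$ as the proposition demands. This produces a tensor functor $F\colon\E\to\E'$ that restricts to the identity on $\C$ and on $\D$, and sends $X\otimes Y\mapsto X\otimes Y$ on simples. Running the same construction in the opposite direction gives $F'\colon\E'\to\E$. Both $F$ and $F'$ are abelian equivalences, because each is compatible with the equivalences $\C\boxtimes\D\to\E$ and $\C\boxtimes\D\to\E'$; hence $F$ is a tensor equivalence of the required kind. The data of Proposition \ref{tffromdp} only involve $\C$, $\D$, $\mathcal J(\phi_Y)$ and $\mathcal J(\psi_X)$, so the tensor structure of $\E$ (which is recovered from the identity functor $F=\Id_\E$) is determined by these data.

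\emph{Second claim.} Suppose the data $\mathcal J'$ are obtained from $\mathcal J$ by the transformation \eqref{tran1}, \eqref{tran2} with scalars $\lambda_{YX}$. Then in $\E$ itself the rescaled isomorphisms $\lambda_{YX}\xi_{YX}$ realize $\mathcal J'$. Consequently any twisted Deligne product $\E'$ realizing $\mathcal J'$ can be compared with $\E$, after rescaling, by the first claim. Equivalently, one can apply Proposition \ref{tffromdp} directly with $b_{YX}=\lambda_{YX}^{\pm1}\xi'_{YX}$ and check the identities using \eqref{tran1} and \eqref{tran2}.

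The main obstacle is bookkeeping in the identity-on-subcategories requirement: one must check that the restrictions $F|_\C$ and $F|_\D$ are the identity as tensor functors, not just as functors. This holds because $F_1$ and $F_2$ are the inclusions with their trivial tensor structures, and the constructed tensor structure $\mathcal J_{W_1\otimes V_1,W_2\otimes V_2}=b_{V_1W_2}$ is trivial whenever $V_1=\bold 1$ or $W_2=\bold 1$, using $b_{\bold 1X}=\id$ and $b_{Y\bold 1}=\id$ for normalized $\xi$.
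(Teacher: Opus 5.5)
Your proposal is correct and is essentially the paper's argument: the paper likewise invokes Proposition \ref{tffromdp} (phrased as a Yoneda-type universal property) to produce a tensor equivalence $\E\to\E'$ restricting to the identity on $\C$ and $\D$ whenever the data $\mathcal J(\phi_Y)$, $\mathcal J(\psi_X)$ agree. Your extra details make explicit what the paper leaves implicit: taking $b_{YX}=\xi'_{YX}$, checking that $F$ is an abelian equivalence via compatibility with $\C\boxtimes\D$, and obtaining the second claim by rescaling $\xi_{YX}$ by $\lambda_{YX}$ inside $\E$ itself.
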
 

\begin{proof} This follows from the categorical Yoneda lemma, as by Proposition \ref{tffromdp} the universal property of $\E$ is determined by $\mathcal J(\phi_Y)$ and $\mathcal J(\psi_X)$. More concretely, if we have two twisted Deligne products $\E$ and $\E'$ with the same $\mathcal J(\phi_Y)$ and $\mathcal J(\psi_X)$ then by Proposition \ref{tffromdp}, there is a tensor equivalence $F: \E\to \E'$ restricting to the identity on $\C,\D$.  
\end{proof} 

\subsection{Properties of $\phi_Y$ and $\psi_X$}

Let $Y_p$ be the simple objects of $\D$. Then for each $p,q,m$ we have a $\C$-bimodule functor 
$$
F_{pq}^m: \C_{Y_p}\boxtimes_{\C}\C_{Y_q}\to \C_{Y_m}
$$ 
given by the tensor product in $\E$. Namely, upon natural identification of the source with $\C$ 
it reduces to tensoring with the vector space $\Hom(Y_m,Y_p\otimes Y_q)$. Since
 $\C_Y\cong \C(\phi_Y)$, we have 
 $$
 F_{pq}^m: \C(\phi_p)\boxtimes_\C \C(\phi_q)\to \C(\phi_m),
 $$ 
 where $\phi_p:=\phi_{Y_p}$. 

\begin{proposition}\label{cocyc} (i) If $Y_m$ is contained in $Y_p\otimes Y_q$ then $\widehat{\phi}_m=\widehat{\phi}_p\widehat{\phi}_q$ modulo $\Im \theta$. 
Thus, $\widehat\phi$ descends to a homomorphism 
$$
\overline{\phi}: U_\D\to \Aut_1(\C)/H^2(U_\C,\bold k^\times).
$$  

(ii) If $X_k$ is contained in $X_i\otimes X_j$ then $\widehat{\psi}_k=\widehat{\psi}_j\widehat{\psi}_i$ modulo $\Im \theta$. Thus, $\widehat\psi$ descends to an anti-homomorphism 
$$
\overline{\psi}: U_\C\to \Aut_1(\D)/H^2(U_\D,\bold k^\times).
$$  
\end{proposition}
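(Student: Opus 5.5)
We will use the bimodule functors $F_{pq}^m$ together with the twisted Drinfeld center and Lemma \ref{twistedcenter}. For (i), suppose $Y_m\subset Y_p\otimes Y_q$. Then $\Hom(Y_m,Y_p\otimes Y_q)\neq 0$, and $F_{pq}^m$ is a nonzero $\C$-bimodule functor $\C(\phi_p)\boxtimes_\C\C(\phi_q)\to\C(\phi_m)$. Its underlying functor is tensoring with the vector space $V:=\Hom(Y_m,Y_p\otimes Y_q)$; more precisely, $F^m_{pq}(\bold 1)$ is $V^*\otimes\bold 1$ or $V\otimes\bold 1$, depending on conventions.

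The first step is to identify the source. We have $\C(\phi_p)\boxtimes_\C\C(\phi_q)\cong\C(\phi_p\phi_q)$: the relative product of invertible twisted bimodules composes the twists. Concretely, $\C_{Y_p}\boxtimes_\C\C_{Y_q}\cong \C\otimes Y_p\otimes Y_q$, and the right action of $X$ is moved past $Y_q$ by $\phi_q$ and then past $Y_p$ by $\phi_p$. Hence $F_{pq}^m$ is a nonzero bimodule functor $\C(\phi_p\phi_q)\to\C(\phi_m)$. By the remark preceding Lemma \ref{twistedcenter}, such functors form $\Z_{\phi_m(\phi_p\phi_q)^{-1}}(\C)$, and $F^m_{pq}$ gives an object $Z$ whose underlying object is $V\otimes\bold 1$.

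Lemma \ref{twistedcenter} then gives $\widehat\phi_m(\widehat\phi_p\widehat\phi_q)^{-1}=\theta([\beta])\in\Im\theta$. Since $\Im\theta$ is central by Lemma \ref{inje}, the order of the factors does not matter when we pass to the quotient, and we get $\widehat\phi_m\equiv\widehat\phi_p\widehat\phi_q$ modulo $\Im\theta$. Lemma \ref{homom}, applied with $G=\Aut_1(\C)/H^2(U_\C,\bold k^\times)$ and $f(Y)=\widehat\phi_Y \bmod \Im\theta$, yields the homomorphism $\overline\phi$.

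The main point to check is the direction of the twists. We must make sure that the composite of $Y_p$ then $Y_q$ really gives $\phi_p\phi_q$ and not $\phi_q\phi_p$; this follows from $X\otimes Y_p\otimes Y_q$ versus $Y_p\otimes Y_q\otimes X$ and the first identity in \eqref{xiiden}. Alternatively, one can avoid bimodule language entirely. Take a nonzero $\iota\in\Hom(Y_m,Y_p\otimes Y_q)$ and use naturality of $\xi$ to get $\xi_{Y_p\otimes Y_q,X}\circ(\iota\otimes\id)=(\id\otimes\iota)\circ\xi_{Y_mX}$, up to the scalar-matrix action on $V$. Combining this with \eqref{xiiden}, the operators $\xi_{Y_p\otimes Y_q,X}$ restricted to the $Y_m$-isotypic part give exactly a solution $\zeta_X\in GL(V)$ of \eqref{conrel} for the twist $\phi_m^{-1}\phi_p\phi_q$, up to the order convention. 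This is the same computation as in the proof of Lemma \ref{twistedcenter}.

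Part (ii) is symmetric, with the roles of $\C$ and $\D$ exchanged. Now $X$ acts on the left of $\D$-objects, so $\D_X:=X\otimes\D$ has its twist on the left. Equivalently, using the second identity in \eqref{xiiden}, where the $\xi$'s compose as $\xi_{V_1X}\xi_{V_2X}$ but with $X$ placed first, the composite for $X_i\otimes X_j$ is $\psi_j\psi_i$. This order reversal is why $\overline\psi$ is an anti-homomorphism. Again centrality of $\Im\theta$ makes the statement modulo $\Im\theta$ unambiguous, and Lemma \ref{homom}, applied to the opposite group, gives $\overline\psi$.
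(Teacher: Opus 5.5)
Your proposal is correct and follows essentially the same route as the paper. You identify the source of $F^m_{pq}$ with $\C(\phi_p\phi_q)$, view $F^m_{pq}$ as a nonzero object of $\Z_{\phi_m(\phi_p\phi_q)^{-1}}(\C)$ whose underlying object is a multiple of the unit, and conclude via Lemmas \ref{twistedcenter}, \ref{inje} and \ref{homom}; part (ii) is the same argument with the order reversed, giving a bimodule functor $\D(\psi_j\psi_i)\to\D(\psi_k)$.
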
 

\begin{proof} (i) Since 
$\C(\phi_p)\boxtimes_\C \C(\phi_q)\cong \C(\phi_p\phi_q)$, the functor $F_{pq}^m$ may be viewed as a $\C$-bimodule functor $\C(\phi_p\phi_q)\to \C(\phi_m)$, i.e., 
it is an object of the twisted Drinfeld center $\Z_{\phi_m\phi_q^{-1}\phi_p^{-1}}(\C)$ which is a multiple of $\bold 1$ as an object of $\C$. Since by Lemma \ref{inje}, $H^2(U_\C,\bold k^\times)\hookrightarrow \Aut_1(\C)$, (i) follows from Lemmas \ref{homom} and  \ref{twistedcenter}.

(ii) The proof is analogous, but with reversed order of factors. Namely, the relation \(Y\otimes X\cong X\otimes \psi_X(Y)\) implies that when \(X_k\subset X_i\otimes X_j\), we have a $\D$-bimodule functor $\D(\psi_j\psi_i)\to \D(\psi_k)$. 
\end{proof}

\begin{corollary}\label{multfree} If $\D$ is a multiplicity free category (i.e., we have that 
$[Y_p\otimes Y_q:Y_m]\le 1$ for all $p,q,m$), or, more generally, if the multiplicities in $\D$ are coprime to 
the order of $H^2(U_\C,\bold k^\times)$ (e.g., coprime to the order of $U_\C$ when it is finite) then $\widehat\phi$ is a homomorphism 
$U_\D\to \Aut_1(\C)$. 
\end{corollary}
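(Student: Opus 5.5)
We go back to the proof of Proposition \ref{cocyc}(i) and keep track of the cocycle instead of discarding it. Suppose $Y_m\subset Y_p\otimes Y_q$ and set $\Phi:=\phi_m\phi_q^{-1}\phi_p^{-1}$. The functor $F^m_{pq}$ is then a nonzero object of $\Z_{\Phi}(\C)$ whose underlying object is $V\otimes\bold 1$, with $V=\Hom(Y_m,Y_p\otimes Y_q)$ and $\dim V=N^m_{pq}$. By Lemma \ref{twistedcenter},
$$
\widehat\phi_m\widehat\phi_q^{-1}\widehat\phi_p^{-1}=\theta([\beta_{pq}^m])
$$
for some 2-cocycle $\beta^m_{pq}$ on $U_\C$. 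This cocycle can be chosen with $(\beta^m_{pq})^{N^m_{pq}}=1$, and $\Rep_{\beta^m_{pq}}(U_\C)\neq 0$ because $V$ is such a representation.

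We now show that $[\beta^m_{pq}]$ is trivial. In the multiplicity free case $N^m_{pq}=1$, so $V$ is a one-dimensional projective representation with cocycle $\beta^m_{pq}$. A map $\rho:U_\C\to\k^\times$ with $\rho(gh)=\rho(g)\rho(h)\beta(g,h)$ exhibits $\beta$ as a coboundary, hence $[\beta^m_{pq}]=1$.

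In the general coprime case, let $n=N^m_{pq}$. The equation $\beta^n=1$ gives $[\beta]^n=1$ in $H^2(U_\C,\k^\times)$. The class $[\beta]$ also lies in $H^2(U_\C,\k^\times)$, whose order is coprime to $n$ by hypothesis. Therefore the order of $[\beta]$ divides $\gcd(n,|H^2|)=1$, so $[\beta]=1$.

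For the parenthetical claim we need that, when $U_\C$ is finite, every prime dividing $|H^2(U_\C,\k^\times)|$ divides $|U_\C|$. This holds because $H^2(U_\C,\k^\times)$ is a finite group killed by $|U_\C|$ (the Schur multiplier, up to the characteristic-$p$ part, which only removes primes). So coprimality to $|U_\C|$ implies coprimality to the order of $H^2$.

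Since $\theta$ is injective by Lemma \ref{inje}, we conclude that $\widehat\phi_m=\widehat\phi_p\widehat\phi_q$ in $\Aut_1(\C)$ whenever $Y_m\subset Y_p\otimes Y_q$. Lemma \ref{homom}, applied with $G=\Aut_1(\C)$ and $f(Y)=\widehat\phi_Y$, then shows that $\widehat\phi$ factors through a homomorphism $U_\D\to\Aut_1(\C)$.

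The only delicate point is the torsion argument in the coprime case: we must use the refined choice of $\beta$ with $\beta^{\dim V}=1$ from Lemma \ref{twistedcenter}, not just its cohomology class.
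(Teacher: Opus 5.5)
Your proof is correct and follows the same route as the paper. In both, the defect $\widehat\phi_m\widehat\phi_q^{-1}\widehat\phi_p^{-1}$ is the class of the multiplier of a projective representation of $U_\C$ on $\Hom(Y_m,Y_p\otimes Y_q)$; this class is trivial when that space has dimension coprime to $|H^2(U_\C,\k^\times)|$, and injectivity of $\theta$ together with Lemma \ref{homom} finishes the argument. Your closing caveat is overstated, though: $[\beta]^{\dim V}=1$ is a property of the cohomology class and follows for any representative by taking determinants, so the normalization $\beta^{\dim V}=1$ from Lemma \ref{twistedcenter} is convenient but not essential.
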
 

\begin{proof} We have seen that if $Y_m\subset Y_p\otimes Y_q$ then $\widehat{\phi}_{Y_m}^{-1}\widehat{\phi}_{Y_p}\widehat{\phi}_{Y_q}\in H^2(U_\C,\bold k^\times)$
is the 2-cocycle of a projective representation of $U_\C$ on the vector space $\Hom(Y_m,Y_p\otimes Y_q)$. 
If this space is 1-dimensional or, more generally, 
has dimension coprime to the order of $H^2(U_\C,\bold k^\times)$ (when it is finite) then this cohomology class must be trivial. Then $\widehat\phi$ is a group homomorphism. 
\end{proof}  

Next, if $G,Q$ are groups, $A\subset Q$ a central subgroup, and 
$$
s: G\to Q/A
$$ 
a homomorphism, 
then let $\delta s \in  H^2(G,A)$ be the class of the 2-cocycle
$$
\beta(g,h):=\widetilde s(gh)\widetilde s(h)^{-1}\widetilde s(g)^{-1}
$$
where $\widetilde s: G\to Q$ is a set-theoretical lift of $s$.
Then we have natural connecting maps
$$
\delta_\D: \Hom(U_\D,\Aut_1(\C)/H^2(U_\C,\bold k^\times))\to H^2(U_\D,H^2(U_\C,\bold k^\times)),
$$ 
\[
\begin{aligned}
\delta_\C:
\Hom(U_\C^{\rm op},\Aut_1(\D)/H^2(U_\D,\bold k^\times))
&\to H^2(U_\C^{\rm op},H^2(U_\D,\bold k^\times))\\
&\cong H^2(U_\C,H^2(U_\D,\bold k^\times)).
\end{aligned}
\]
 
\begin{theorem} \label{vani} 
One has $\delta_\D\overline{\phi}=1,\ \delta_\C\overline{\psi}=1$. Thus 
$\overline{\phi}$ comes from a homomorphism
\[
\widetilde{\phi}: U_\D\to \Aut_1(\C)
\]
and $\overline{\psi}$ comes from an anti-homomorphism
\[
\widetilde{\psi}: U_\C\to \Aut_1(\D),
\]
in general non-uniquely.
\end{theorem}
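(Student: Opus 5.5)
The plan is to make the obstruction $\delta_\D\overline\phi$ explicit using the canonical lifts $\widehat\phi_Y\in\Aut_1(\C)$ that already exist for each simple $Y\in\D$. Then I would show that the resulting $H^2(U_\C,\k^\times)$-valued $2$-cocycle on $U_\D$ is a coboundary. The statement for $\overline\psi$ is the same argument with the order of factors reversed, as in Proposition \ref{cocyc}(ii). Throughout write $A:=H^2(U_\C,\k^\times)$, viewed via Lemma \ref{inje} as a central subgroup of $\Aut_1(\C)$.

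\textbf{Step 1: the obstruction on triples.} For simple $Y_p,Y_q,Y_m$ with $Y_m\subset Y_p\otimes Y_q$, set
$$\varepsilon(p,q;m):=\widehat\phi_m^{-1}\widehat\phi_p\widehat\phi_q\in A.$$
By the proof of Proposition \ref{cocyc} and Lemma \ref{twistedcenter}, this is the class of the projective representation of $U_\C$ on the multiplicity space $\Hom(Y_m,Y_p\otimes Y_q)$ carried by $F^m_{pq}$.

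\textbf{Step 2: relating $\varepsilon$ to $\delta_\D\overline\phi$.} Choose a representative $Y_g$ of each degree $g\in U_\D$, with $Y_1=\bold 1$, and take $s(g)=\widehat\phi_{Y_g}$ as the set-theoretic lift of $\overline\phi$. Put $a(Y):=s(\deg Y)^{-1}\widehat\phi_Y\in A$. A direct substitution gives
$$\varepsilon(p,q;m)=\beta(\deg Y_p,\deg Y_q)^{\pm1}\,a(Y_p)\,a(Y_q)\,a(Y_m)^{-1},$$
where $\beta$ is the cocycle representing $\delta_\D\overline\phi$ for this lift. So $\beta$, pulled back to triples, equals $\varepsilon$ up to the coboundary of $a$. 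It therefore suffices to show that $\varepsilon$ is the coboundary of a function $b:{\rm Irr}(\D)\to A$. Given such $b$, the function $a b^{-1}$ satisfies the hypothesis of Lemma \ref{homom} up to $\beta$. Running the argument of Lemma \ref{inje} with values in $A$ then shows that $\beta$ is a coboundary on $U_\D$. Modifying $s$ by the resulting function on $U_\D$ yields the homomorphism $\widetilde\phi$. The non-uniqueness in the statement corresponds to twisting $\widetilde\phi$ by $\Hom(U_\D,A)$.

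\textbf{Step 3: $\varepsilon$ is a coboundary.} First check that $\varepsilon$ satisfies a categorical cocycle identity. Decompose $\Hom(Y_m,Y_p\otimes Y_q\otimes Y_r)$ in the two ways allowed by associativity in $\E$. The associativity constraint intertwines the two projective $U_\C$-actions, and classes of projective representations are multiplicative under tensor products. Hence, for every $Y_k$ appearing in both decompositions,
$$\varepsilon(p,q;k)\,\varepsilon(k,r;m)=\varepsilon(q,r;k')\,\varepsilon(p,k';m).$$
Next I would realize $\varepsilon$ globally. The $\C$-bimodule structure on $\E_h:=\bigoplus_{\deg Y=h}\C\otimes Y$ and the tensor product $\E_g\boxtimes_\C\E_h\to\E_{gh}$ suggest a factorization of $\varepsilon$. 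Concretely, I want a projective $U_\C$-representation attached to each simple $Y$, namely the one on $\Hom(Y,Y_{\deg Y}\otimes Y_{\deg Y}^{*}\otimes Y)$-type spaces. Normalizing by these should exhibit $\varepsilon=db$. A useful sanity check: $\Hom(\bold 1,Y\otimes Y^*)$ is one-dimensional, so $\varepsilon(Y,Y^*;\bold 1)=1$, which pins down $b(Y^*)=b(Y)^{-1}$.

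I expect Step 3 to be the main obstacle. The cocycle identity alone does not obviously force $\varepsilon$ to be a coboundary rather than merely the pullback of a group $2$-cocycle on $U_\D$, which is all the Appendix-type result would give. The extra input must come from the fact that $\varepsilon$ consists of classes of genuine projective representations realized on multiplicity spaces, together with rigidity. If the direct construction of $b$ fails, the fallback is to compute $\delta_\D\overline\phi$ through the K\"unneth decomposition of Example \ref{ex1}. There I would identify it with the image under $\eta$ of a class coming from the associator of $\E$, and show that this image vanishes by the transposition description.
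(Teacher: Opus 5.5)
\textbf{There is a genuine gap: Step 3 contains no argument, and it carries all of the content.}

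Your Step 2 is correct, but it reduces nothing. If $\widetilde\phi$ is a homomorphic lift of $\overline\phi$, then $b(Y):=\widetilde\phi(\deg Y)^{-1}\widehat\phi_Y\in A$ satisfies $\varepsilon(p,q;m)=b(Y_p)b(Y_q)b(Y_m)^{-1}$. Conversely, such a $b$ gives the lift, as you show. So ``$\varepsilon$ is a coboundary'' is just a restatement of $\delta_\D\overline\phi=1$, and everything rests on Step 3, where no proof is given.

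The identity $\varepsilon(p,q;k)\varepsilon(k,r;m)=\varepsilon(q,r;k')\varepsilon(p,k';m)$ that you extract from associativity carries no information. Since $\varepsilon$ takes values in the central subgroup $A$, both sides equal $\widehat\phi_m^{-1}\widehat\phi_p\widehat\phi_q\widehat\phi_r$ for all admissible $k,k'$, whatever the associator of $\E$ is. So it cannot distinguish coboundaries from non-coboundaries. The function $b$ built from ``$\Hom(Y,Y_{\deg Y}\otimes Y_{\deg Y}^*\otimes Y)$-type spaces'' is never actually defined, and nothing indicates why it would satisfy $\varepsilon=db$.

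Your fallback aims at the wrong degree. The map $\eta$ of Example~\ref{ex1}(2) belongs to the K\"unneth analysis of $\Gamma_3\subset H^3(U_\C\times U_\D,\k^\times)$ and lands in a group of the form $H^1(-,H^2(-,\k^\times))$. By contrast, $\delta_\D\overline\phi$ lies in $H^2(U_\D,H^2(U_\C,\k^\times))$, which is the $E_2^{2,2}$-term contributing to $H^4(U_\C\times U_\D,\k^\times)$.

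\textbf{The missing idea is a coherence input involving two objects from each factor.} Your Step 3 only uses that the associator of $\D$ intertwines the projective $U_\C$-actions on multiplicity spaces. That is a relation with three objects of $\D$ and one of $\C$, whereas $\delta_\D\overline\phi$ is a class of type $(2,2)$.

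The paper obtains the vanishing globally, as follows.
\begin{enumerate}
\item It regards $\E$ as a $U_\C\times U_\D$-graded extension of $\E_{\rm ad}$. Since $\E$ exists with a genuine associator, the second obstruction $O_4\in H^4(U_\C\times U_\D,\k^\times)$ of \cite{ENO} is trivial.
\item The $(0,4)$-component of $O_4$ vanishes because $\C$ is a tensor category. The $(1,3)$-component vanishes because each $\C_Y$ is a $\C$-bimodule category. Hence the next component $\xi_{2,2}(O_4)\in H^2(U_\D,H^2(U_\C,\k^\times))$ is defined.
\item The paper identifies $\xi_{2,2}(O_4)$ with $\delta_\D\overline\phi$ by evaluating the pentagon defect on $X_a,X_b,Y_g,Y_h$ over the blocks $\Hom(X_c,X_a\otimes X_b)\otimes V_{g,h}$. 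On these blocks one uses the multiplier $\beta_{g,h}$ of the projective $U_\C$-action on $V_{g,h}=\Hom(Z_{g,h},Y_g\otimes Y_h)$, corrected by the class $\kappa_{g,h}$ comparing $\widehat\phi_{Z_{g,h}}$ with the fixed lift $\widehat\phi_{Y_{gh}}$. The product $\kappa_{g,h}\beta_{g,h}$ is exactly the connecting cocycle of the section $t\mapsto\widehat\phi_{Y_t}$.
\item The Lyndon--Hochschild--Serre spectral sequence degenerates for a direct product. Therefore $O_4=1$ forces $\delta_\D\overline\phi=1$, and symmetrically $\delta_\C\overline\psi=1$.
\end{enumerate}
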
 

\begin{proof}
Recall that $\E$ is $U_\C\times U_\D$-graded. View $\E$ as a
$U_\C\times U_\D$-extension of its trivial component $\E_{\rm ad}$, and
apply the usual two-step obstruction theory of \cite{ENO} (this
construction applies to semisimple tensor categories with possibly infinitely many objects). Let
$O_4\in H^4(U_\C\times U_\D,\bold k^\times)$
be the second obstruction. Since the extension $\E$ exists, this
obstruction class vanishes.

We use the filtration of the Lyndon--Hochschild--Serre spectral sequence $E_2^{p,q}=H^p(U_\D,H^q(U_\C,\bold k^\times))
\Longrightarrow H^{p+q}(U_\C\times U_\D,\bold k^\times)$
for the trivial extension
\[
        1\to U_\C\to U_\C\times U_\D\to U_\D\to 1.
\]
The $(0,4)$-component of $O_4$ vanishes because $\C$ is already a tensor
category. The $(1,3)$-component vanishes because each $\C_Y$ is a
$\C$-bimodule category. Hence the first possible mixed component is the
$(2,2)$-component
\[
        \xi_{2,2}(O_4)\in H^2(U_\D,H^2(U_\C,\bold k^\times)).
\]

We claim that
\[
        \xi_{2,2}(O_4)=\delta_\D\overline{\phi}.
\]
To see this, fix once and for all, for every $t\in U_\D$, a simple object
$Y_t\in\D_t$, with $Y_1=\bold 1$, and put $\phi_t:=\phi_{Y_t}$.
Thus $t\mapsto\widehat{\phi}_t$ is a fixed normalized
set-theoretical lift of $\overline\phi$. For each pair $(g,h)$ choose a
simple summand $Z_{g,h}\subset Y_g\otimes Y_h$, put
$\phi_{g,h}:=\phi_{Z_{g,h}}$, and set
\[
        V_{g,h}:=\Hom(Z_{g,h},Y_g\otimes Y_h).
\]
Projection of the tensor product to this summand gives a
$\C$-bimodule functor
\[
        \C(\phi_g\phi_h)\longrightarrow \C(\phi_{g,h})
\]
whose underlying left $\C$-module functor is tensoring by $V_{g,h}$.
By Lemma~\ref{twistedcenter}, it determines a projective
$U_\C$-representation on $V_{g,h}$; choose a normalized multiplier
$\beta_{g,h}\in Z^2(U_\C,\bold k^\times)$. Then
\[
        \theta([\beta_{g,h}])
        =\widehat{\phi}_{g,h}\widehat{\phi}_h^{-1}
         \widehat{\phi}_g^{-1}.
\]
Since $\deg Z_{g,h}=gh=\deg Y_{gh}$, the elements
$\widehat{\phi}_{g,h}$ and $\widehat{\phi}_{gh}$ have the same image
in $\Aut_1(\C)/H^2(U_\C,\bold k^\times)$. Hence, by the injectivity of
$\theta$, there is a unique class
$[\kappa_{g,h}]\in H^2(U_\C,\bold k^\times)$ such that
\[
        \theta([\kappa_{g,h}])
        =\widehat{\phi}_{gh}\widehat{\phi}_{g,h}^{-1}.
\]
Choose a normalized cocycle representative $\kappa_{g,h}$. Since
${\rm Im}\,\theta$ is central, the cocycle
\[
        c_{g,h}:=\kappa_{g,h}\beta_{g,h}
\]
represents
\[
        \widehat{\phi}_{gh}\widehat{\phi}_h^{-1}
        \widehat{\phi}_g^{-1}.
\]
Consequently, $(g,h)\mapsto[c_{g,h}]$ is precisely the connecting
cocycle for the fixed section $t\mapsto\widehat{\phi}_t$.

It remains to identify this connecting cocycle with the
$(2,2)$-component of $O_4$. We use the standard filtered bar-complex
model for the Lyndon--Hochschild--Serre spectral sequence, or
equivalently the Alexander--Whitney comparison with the tensor product
of the bar complexes of $U_\C$ and $U_\D$; see
\cite[Chapter VII, Sections 6--7]{Brown}. With the conventions of
Subsection~2.4, after the $(0,4)$- and $(1,3)$-components have been
killed, the map $\xi_{2,2}$ sends a normalized representative $\Omega$
of $O_4$ to the $U_\D$-2-cocycle with values in the group of 2-cocycles $Z^2(U_\C,\bold k^\times)$
given by
\[
        (g,h)\longmapsto
        \big((a,b)\longmapsto
        \Omega((a,1),(b,1),(1,g),(1,h))\big).
\]

Let $a,b\in U_\C$ and choose simple objects
$X_a\in \C_a$, $X_b\in \C_b$. The scalar
$
        \Omega((a,1),(b,1),(1,g),(1,h))
$
is obtained by evaluating the pentagon defect of the chosen extension
data on the four homogeneous objects
$
        X_a, X_b, Y_g, Y_h.
$
Equivalently, it is the ratio of the two routes in the diagram \scriptsize
\[
\begin{tikzcd}[column sep=huge,row sep=large]
((X_a\otimes X_b)\otimes Y_g)\otimes Y_h
\arrow[r,"\alpha"]
\arrow[d,"\alpha"']
&
(X_a\otimes (X_b\otimes Y_g))\otimes Y_h
\arrow[r,"\alpha"]
&
X_a\otimes ((X_b\otimes Y_g)\otimes Y_h)
\arrow[d,"\id\otimes \alpha"]
\\
(X_a\otimes X_b)\otimes (Y_g\otimes Y_h)
\arrow[rr,"\alpha"']
&&
X_a\otimes (X_b\otimes (Y_g\otimes Y_h)).
\end{tikzcd}
\] \normalsize
Restrict this diagram to the block
\[
        \Hom(X_c,X_a\otimes X_b)\otimes V_{g,h},
        \qquad X_c\subset X_a\otimes X_b.
\]
The block calculation with the pair-dependent lift $\phi_{g,h}$ gives
$\beta_{g,h}(a,b)$: it measures the defect between the right
$\C$-action attached to $Z_{g,h}$ and the two successive right actions
attached to $Y_g$ and $Y_h$. The edge map $\xi_{2,2}$, however, is
computed using the fixed section $t\mapsto\widehat{\phi}_t$. Passing
from the pair-dependent lift $\widehat{\phi}_{g,h}$ to the fixed lift
$\widehat{\phi}_{gh}$ contributes the class
$[\kappa_{g,h}]$. Thus, if
\[
        \Omega_{g,h}(a,b):=
        \Omega((a,1),(b,1),(1,g),(1,h)),
\]
then
\[
        [\Omega_{g,h}]
        =[\kappa_{g,h}\beta_{g,h}]
        =[c_{g,h}]
        \quad\text{in }H^2(U_\C,\bold k^\times).
\]
Equivalently, compatible choices of cocycle representatives make the
first equality hold already at the cochain level. Hence
$\xi_{2,2}(O_4)$ is represented by
$(g,h)\mapsto[c_{g,h}]$, and therefore
\[
        \xi_{2,2}(O_4)=\delta_\D\overline\phi.
\]

Since $O_4=1$, we get $\xi_{2,2}(O_4)=1$, and hence
$\delta_\D\overline\phi=1$. The proof of
$\delta_\C\overline\psi=1$ is the same with the two factors interchanged.
\end{proof}

Now suppose $X_k\subset X_i\otimes X_j$ and $Y_m\subset Y_p\otimes Y_q$.  By Proposition~\ref{cocyc} and the description of the central subgroup ${\rm Im}\theta$ in Subsection~2.2, the defect tensor structures
\[
        \phi_m^{-1}\phi_p\phi_q|_{\Hom(X_k,X_i\otimes X_j)}
        \quad\text{and}\quad
        \psi_k^{-1}\psi_j\psi_i|_{\Hom(Y_m,Y_p\otimes Y_q)}
\]
are scalar operators. 

\begin{proposition}\label{phipsi} 
We have 
\begin{equation}\label{equall}
\phi_m^{-1}\phi_p\phi_q|_{\Hom(X_k,X_i\otimes X_j)}=\psi_k^{-1}\psi_j\psi_i|_{\Hom(Y_m,Y_p\otimes Y_q)}.
\end{equation}
\end{proposition}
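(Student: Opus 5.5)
The identity \eqref{equall} is a compatibility between the two ways of commuting a product of two $\C$-objects past a product of two $\D$-objects. I will obtain it by computing a single isomorphism
$$
(Y_p\otimes Y_q)\otimes(X_i\otimes X_j)\to (X_i\otimes X_j)\otimes(Y_p\otimes Y_q)
$$
in two different orders and comparing the results.

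\emph{Route A.} Commute each $Y$ past the pair $X_i\otimes X_j$ as a whole, using the first identity in \eqref{xiiden}. This produces the factors $\mathcal J(\phi_{Y_p})$ and $\mathcal J(\phi_{Y_q})$ acting on $X_i\otimes X_j$, together with the four elementary braidings $\xi_{Y_\bullet X_\bullet}$.

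\emph{Route B.} Commute each $X$ past the pair $Y_p\otimes Y_q$ as a whole, using the second identity in \eqref{xiiden}. This produces the factors $\mathcal J(\psi_{X_i})$ and $\mathcal J(\psi_{X_j})$ acting on $Y_p\otimes Y_q$, together with the same four elementary $\xi$'s.

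\emph{Comparing the two routes.} The composite of the four elementary $\xi$'s is the same along both routes. This follows from naturality of $\xi$ and the pentagon in $\E$; it is the interchange law for the two independent swaps $Y_q$ past $X_i$ and $Y_p$ past $X_j$. Hence, after suppressing associators, the scalar endomorphisms
$$
(\mathcal J(\phi_p)\mathcal J(\phi_q))_{X_i,X_j}\quad\text{and}\quad (\mathcal J(\psi_j)\mathcal J(\psi_i))_{Y_p,Y_q}
$$
correct the same composite. Each route, however, is still written only in terms of the braidings with $X_i,X_j$ and $Y_p,Y_q$ separately, not yet in terms of the block $X_k\otimes Y_m$.

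\emph{Projecting to the block.} Restrict to the multiplicity block
$$
\Hom(X_k,X_i\otimes X_j)\otimes\Hom(Y_m,Y_p\otimes Y_q),
$$
and compute the braiding $\xi_{Y_m X_k}$ of the summands in two ways.

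On one side, apply \eqref{xiiden} to the summand $Y_m\subset Y_p\otimes Y_q$ viewed inside $\D$. The second identity with $V_1=Y_p$, $V_2=Y_q$, restricted to the summand $Y_m$, expresses $\xi_{Y_m,X}$ through $\xi_{Y_pX}\xi_{Y_qX}$ and $\mathcal J(\psi_X)|_{\Hom(Y_m,Y_p\otimes Y_q)}$, for $X=X_i,X_j,X_k$.

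On the other side, apply the first identity with $W_1=X_i$, $W_2=X_j$ projected to $X_k$. This expresses $\xi_{Y,X_k}$ through $\xi_{YX_j}\xi_{YX_i}$ and $\mathcal J(\phi_Y)|_{\Hom(X_k,X_i\otimes X_j)}$, for $Y=Y_p,Y_q,Y_m$.

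\emph{Eliminating the $\xi$'s.} Compute $\xi_{Y_m,X_k}$ restricted to the block both ways:
\begin{itemize}
\item first via $\phi_{Y_m}$, then expand each $\xi_{Y_m,X_i}$ and $\xi_{Y_m,X_j}$ via $\psi_{X_i},\psi_{X_j}$;
\item first via $\psi_{X_k}$, then expand $\xi_{Y_p,X_k}$ and $\xi_{Y_q,X_k}$ via $\phi_{Y_p},\phi_{Y_q}$.
\end{itemize}
Both expansions end up expressed through the four elementary $\xi_{Y_{p,q}X_{i,j}}$. By the interchange step above, these composites agree. All the $\mathcal J$'s involved act on the block by scalars: for the defect combinations this is the remark preceding the proposition, and the individual $\mathcal J$'s cancel in the comparison. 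Cancelling the common $\xi$-composite therefore gives
$$
\phi_m|\cdot\psi_j|\psi_i| = \psi_k|\cdot\phi_p|\phi_q|
$$
on the block, which rearranges to \eqref{equall}.

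\emph{Main obstacle.} The hardest part is the bookkeeping, not the idea. I must check that every intermediate expression restricts consistently to the chosen multiplicity block, and that the order of composition of $\psi_j,\psi_i$ (reversed relative to $\phi_p,\phi_q$, matching Proposition~\ref{cocyc}(ii)) comes out correctly. I must also confirm that the individual $\mathcal J(\phi_Y)$, which need not be scalar on $\Hom(X_k,X_i\otimes X_j)$, cancel in pairs so that only the scalar defect combinations survive. To control this, I would draw the $2\times2$ "grid" diagram for the four objects, with associators suppressed via Mac Lane coherence, and read the two boundary paths.
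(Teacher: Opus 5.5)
Your proposal is correct and is essentially the paper's argument. The paper also compares two expansions of $\xi_{Y_p\otimes Y_q,X_i\otimes X_j}$ on the block selected by $u\in\Hom(X_k,X_i\otimes X_j)$ and $v\in\Hom(Y_m,Y_p\otimes Y_q)$, using naturality of $\xi$: one expansion uses $\psi_j,\psi_i$ and then $\phi_m$, the other uses $\psi_k$ and then $\phi_p,\phi_q$, giving $J^\phi_m\otimes J^\psi_jJ^\psi_i=J^\phi_pJ^\phi_q\otimes J^\psi_k$. Your explicit appeal to the interchange law (bifunctoriality of $\otimes$) to identify the two four-crossing composites fills in a step the paper leaves implicit.

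Two small inaccuracies, neither affecting the argument. Your Route A/B paragraph is unnecessary, and $\mathcal J(\phi_p)\mathcal J(\phi_q)$ is not scalar there. Also, the individual $\mathcal J$'s do not cancel; rather, because the final identity has tensor-factor form on nonzero multiplicity spaces, both defect operators are forced to be the same scalar.
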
 

\begin{proof} Write $J^\phi_r(i,j;k)$ for the operator by which $\mathcal J(\phi_r)$ acts on the space $\Hom(X_k,X_i\otimes X_j)$ and $J^\psi_s(p,q;m)$ for the operator by which $\mathcal J(\psi_s)$ acts on $\Hom(Y_m,Y_p\otimes Y_q)$. Then the two sides of \eqref{equall} are represented by the scalar defect operators
\[
        J^\phi_m(i,j;k)^{-1}J^\phi_p(i,j;k)J^\phi_q(i,j;k)
\]
and
\[
        J^\psi_k(p,q;m)^{-1}J^\psi_j(p,q;m)J^\psi_i(p,q;m),
\]
respectively.

Choose nonzero morphisms $u:X_k\to X_i\otimes X_j$ and $v:Y_m\to Y_p\otimes Y_q$.  Naturality of $\xi$ gives the commutative square
\[
\begin{tikzcd}[column sep=large,row sep=large]
Y_m\otimes X_k \arrow[r,"{\xi_{Y_mX_k}}"] \arrow[d,"{v\otimes u}"'] & X_k\otimes Y_m \arrow[d,"{u\otimes v}"] \\
(Y_p\otimes Y_q)\otimes (X_i\otimes X_j) \arrow[r,"{\xi_{Y_p\otimes Y_q, X_i\otimes X_j}}"] & (X_i\otimes X_j)\otimes (Y_p\otimes Y_q).
\end{tikzcd}
\]
Let us compare two expansions of the bottom horizontal arrow on the block selected by $u$ and $v$.
First, use the second line of \eqref{xiiden} with $X=X_i$ and with $X=X_j$ to split the crossings of $Y_m$ with $X_i$ and $X_j$ through the summand $Y_m\subset Y_p\otimes Y_q$.  These two applications contribute, in this order, $J^\psi_j(p,q;m), J^\psi_i(p,q;m)$. Then use the first line of \eqref{xiiden} with $Y=Y_m$ and $(W_1,W_2)=(X_i,X_j)$.  On the chosen block this contributes $J^\phi_m(i,j;k)$. 
Thus this expansion gives the operator
\[
        J^\phi_m(i,j;k)\otimes J^\psi_j(p,q;m)J^\psi_i(p,q;m).
\]
Next, use the second line of \eqref{xiiden} with $X=X_k$.  On the chosen block this contributes $J^\psi_k(p,q;m)$.  Then use the first line of \eqref{xiiden} with $Y=Y_p$ and with $Y=Y_q$ to split the crossings of $Y_p$ and $Y_q$ with $X_k$ through the summand $X_k\subset X_i\otimes X_j$.  These two applications contribute, in this order,
$J^\phi_p(i,j;k), J^\phi_q(i,j;k)$. Thus this expansion gives the operator
$$
        J^\phi_p(i,j;k)J^\phi_q(i,j;k)\otimes J^\psi_k(p,q;m)
$$
The commutative square implies equality of these two operators:
\[
        J^\phi_m(i,j;k)\otimes J^\psi_j(p,q;m)J^\psi_i(p,q;m)
        =
        J^\phi_p(i,j;k)J^\phi_q(i,j;k)\otimes J^\psi_k(p,q;m).
\]
Multiplying this equality on the left by $J^\phi_m(i,j;k)^{-1}\otimes\id$ and on the right by $\id\otimes J^\psi_k(p,q;m)^{-1}$ gives \scriptsize
$$
        \id\otimes J^\psi_j(p,q;m)J^\psi_i(p,q;m)J^\psi_k(p,q;m)^{-1}
        =J^\phi_m(i,j;k)^{-1}J^\phi_p(i,j;k)J^\phi_q(i,j;k)\otimes\id.
        $$ \normalsize
Since both Hom spaces under consideration are nonzero, it follows that
\[
J^\psi_j(p,q;m)J^\psi_i(p,q;m)J^\psi_k(p,q;m)^{-1}=\lambda\cdot\id
\]
and
\[
J^\phi_m(i,j;k)^{-1}J^\phi_p(i,j;k)J^\phi_q(i,j;k)=\lambda\cdot\id
\]
for some $\lambda\in \k^\times$.
Now cyclic permutation of factors in the first product gives \eqref{equall}.
\end{proof} 

\subsection{The action of $\Gamma_3(U_\C,U_\D,\k^\times)$ on twisted Deligne products}

\begin{lemma}\label{act0} Let ${\mathcal T}(\C,\D)$ be the set of equivalence classes of twisted Deligne products of $\C$, $\D$. 
Then the group $\Gamma_3(U_\C,U_\D,\k^\times)$ acts naturally on ${\mathcal T}(\C,\D)$. Moreover, this action is free.
\end{lemma}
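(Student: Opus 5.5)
The idea is to let a class $\omega\in\Gamma_3(U_\C,U_\D,\k^\times)$ act on $\E=\C\bullet\D$ by twisting its associativity constraint through the $U_\C\times U_\D$-grading of $\E$. Concretely, $\E$ is graded by $U_\C\times U_\D$, with $\deg(X\otimes Y)=(\deg X,\deg Y)$ for simple $X\in\C$, $Y\in\D$. Choose a normalized 3-cocycle $\Omega$ on $U_\C\times U_\D$ representing $\omega$ whose restrictions to $U_\C$ and $U_\D$ are identically $1$. This is possible because $\xi_{0,3}(\omega)$ and $\xi_{3,0}(\omega)$ vanish, so a coboundary correction can be made on each factor and then pulled back along the projections. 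Define $\E^\Omega$ to be $\E$ with associator $\alpha^\Omega_{A,B,C}=\Omega(\deg A,\deg B,\deg C)\,\alpha_{A,B,C}$ on homogeneous objects. Since $\Omega$ is a 3-cocycle, the pentagon still holds. Because $\Omega$ is trivial on $U_\C$ and on $U_\D$, the subcategories $\C$ and $\D$ keep their associators, so they embed as tensor subcategories. The tensor product is unchanged, so $\C\boxtimes\D\to\E^\Omega$ is still an abelian equivalence and $X\otimes Y\cong Y\otimes X$ still holds. Hence $\E^\Omega$ is again a twisted Deligne product.

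Next I check well-definedness and the action axioms. If $\Omega'=\Omega\cdot d\mu$ with both normalized as above, then $d\mu$ is trivial on each factor. Write $\mu=\mu_0\cdot\mu_1$, where $\mu_1$ is trivial on each factor and $\mu_0$ is a pullback of 2-cocycles of the factors. Then $\mu_1$ defines a tensor structure on $\Id_\E$ giving $\E^\Omega\simeq\E^{\Omega'}$, and this equivalence is the identity on $\C$ and on $\D$. An equivalence $\E\simeq\E'$ that restricts to the identity on $\C$ and $\D$ preserves degrees, so it induces $\E^\Omega\simeq\E'^\Omega$. Finally, $(\E^{\Omega})^{\Omega'}=\E^{\Omega\Omega'}$ on the nose. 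So we obtain a group action on ${\mathcal T}(\C,\D)$.

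The main obstacle is freeness. Suppose $F:\E\to\E^\Omega$ is a tensor equivalence restricting to the identity tensor functors on $\C$ and $\D$. Then $F$ is the identity additive functor up to isomorphism, and its tensor structure is a family of isomorphisms $J_{A,B}$ of $A\otimes B$ satisfying the twisted hexagon relation: $dJ$ equals $\Omega(\deg A,\deg B,\deg C)$. I would use Proposition \ref{tffromdp}, which says $F$ is determined by $F_1=\Id$, $F_2=\Id$, and isomorphisms $b_{YX}$. On simple $X,Y$ these are scalars $b_{YX}$ relative to $\xi_{YX}$. The data $\mathcal J(\phi_Y)$ and $\mathcal J(\psi_X)$ of $\E^\Omega$ differ from those of $\E$ by scalars built from $\Omega$ (after normalizing $\Omega$ further using the K\"unneth filtration). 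The relations in (3) then say that these $\Omega$-corrections equal the coboundary-type factors $b_{YW_3}b_{YW_1}^{-1}b_{YW_2}^{-1}$ and $b_{V_3X}b_{V_1X}^{-1}b_{V_2X}^{-1}$, as in \eqref{tran1} and \eqref{tran2}.

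From this I would deduce that $\Omega$ is cohomologous to a cocycle trivial on $\Gamma_3$, i.e.\ $\omega=1$. First, via Lemma \ref{homom}, the $b$'s descend to functions on degrees. Then the two relations kill, respectively, the components $\tau_H(\omega)\in\Hom(U_\D,H^2(U_\C,\k^\times))$ and $\tau_G(\omega)\in \Hom(U_\C,H^2(U_\D,\k^\times))$, together with the kernel part $H^2(U_\C,\Hom(U_\D,\k^\times))$, using the exact sequences of Example \ref{ex1}. I expect the delicate step to be showing that $b_{YX}$ depends only on $(\deg X,\deg Y)$ modulo terms absorbed by coboundaries. For this I would argue as in the proof of Lemma \ref{inje}, applying it simultaneously in both variables.
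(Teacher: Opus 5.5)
\textbf{Overall.} Your construction of the action and your well-definedness checks are fine. They are a more careful version of what the paper does: a representative trivial on both factors, twisting by degrees, and cobounding cochains trivial on both factors.

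For freeness the paper does not compute. It notes that twisting by $\omega$ changes $\E$, viewed as a $U_\C\times U_\D$-extension of $\E_{\rm ad}$, by a class in $H^3(U_\C\times U_\D,\k^\times)$, and then invokes the ENO classification of extensions. Your hands-on route through Proposition \ref{tffromdp} is a legitimate alternative, and the relations you extract are right. With $b_{YX}=\beta(X,Y)\xi_{YX}$ and Proposition \ref{transfo}, condition (3) becomes $\tau_1(\Omega)(\deg Y)(\deg W_1,\deg W_2)=\beta(W_1,Y)\beta(W_2,Y)\beta(W_3,Y)^{-1}$, together with the analogous identity for $\tau_2(\Omega)$.

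However, you have misplaced the difficulty. That $\beta$ depends only on degrees is routine. For fixed $Y$ the defect depends only on $\deg W_1,\deg W_2$, so the argument of Lemma \ref{inje} applies in each variable separately. (Lemma \ref{homom} is not the right tool here, since $\beta$ is not multiplicative.)

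\textbf{The gap.} The genuine gap is your final step. From the two relations, the exact sequences of Example \ref{ex1} only give $\tau_1(\omega)=\tau_2(\omega)=1$, i.e.\ $\omega\in K(U_\C,U_\D)$. This $K$ is in general nonzero (e.g.\ $\mathbb Z/2$ for $U_\C=U_\D=\mathbb Z/2$ over $\mathbb C$). It is exactly the part of $\Gamma_3$ that is invisible to $\widehat\phi,\widehat\psi$, so nothing at the level of cohomology classes can kill it. What you must use is that one and the same function $\bar\beta$ trivializes both defects at the cochain level.

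\textbf{One way to close it.} Once $\omega\in K$, use your own well-definedness step. The identity functor with a suitable tensor structure gives an equivalence $\E^\Omega\simeq\E^{\omega_\gamma}$ that is trivial on $\C$ and $\D$, where $\omega_\gamma((h_1,g_1),(h_2,g_2),(h_3,g_3))=\gamma(g_1,g_2)(h_3)$ is the representative from Proposition \ref{act}. Composing with $F$ gives a new equivalence $\E\to\E^{\omega_\gamma}$ and a new $\bar\beta$. Since $\tau_1(\omega_\gamma)=1$ on the nose, each $\bar\beta(-,g)$ is a character of $U_\C$. The $\psi$-relation then reads $\gamma=(d_{U_\D}\bar\beta)^{\pm1}$, with $\bar\beta$ a $1$-cochain on $U_\D$ valued in $\Hom(U_\C,\k^\times)$. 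Hence $[\gamma]=0$ and $\omega=1$.

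\textbf{A shorter alternative.} You can bypass K\"unneth altogether. Once $\bar\beta$ is known to be degree-dependent, consider the functor rebuilt from $(\Id_\C,\Id_\D,b)$ in Proposition \ref{tffromdp}. Its tensor structure acts on $A\otimes B$, for simple $A,B\in\E$, by a scalar $c(\deg A,\deg B)$ built from $\bar\beta$ and values of $\Omega$. The tensor-functor axiom then gives $\Omega=(dc)^{\pm1}$ pointwise.
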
 

\begin{proof} Recall that $\Gamma_3(U_\C,U_\D,\k^\times)$ 
is the kernel of the natural map $H^3(U_\C\times U_\D,\k^\times)\to H^3(U_\C,\k^\times)\oplus H^3(U_\D,\k^\times)$. The action of $\omega\in \Gamma_3(U_\C,U_\D,\k^\times)$ on ${\mathcal T}(\C,\D)$ is by multiplying the associativity morphism on homogeneous objects of degrees $(h_1,g_1),(h_2,g_2),(h_3,g_3)$ by the scalar $\omega((h_1,g_1),(h_2,g_2),(h_3,g_3))$. The restrictions of $\omega$ to $U_\C$ and to $U_\D$ are trivial in cohomology, so after replacing $\omega$ by a cohomologous cocycle if necessary the tensor structures on the embedded subcategories $\C$ and $\D$ are unchanged, so the action is well defined. Since this action modifies $\E\in T(\C,\D)$ as a $U_\E=U_\C\times U_\D$-extension of $\E_{\rm ad}$ by an element of $H^3(U_\E,\k^\times)$, it is free by the classification of extensions in \cite{ENO}. 
\end{proof} 

Now recall from Example \ref{ex1}(2) that we have natural maps \scriptsize
$$
\tau_1: \Gamma_3(U_\C,U_\D,\k^\times)\to H^1(U_\D,H^2(U_\C,\bold k^\times)),\ 
\tau_2: \Gamma_3(U_\C,U_\D,\k^\times)\to H^1(U_\C,H^2(U_\D,\bold k^\times)).
$$
\normalsize
For a (normalized) representative $\omega$, we use the following representatives of these maps.  If $h,h_1,h_2\in U_\C$ and $g,g_1,g_2\in U_\D$, then
$$
\tau_1(\omega)(g)(h_1,h_2)
=
$$
$$
\omega((1,g),(h_1,1),(h_2,1))\omega((h_1,1),(1,g),(h_2,1))^{-1}
 \omega((h_1,1),(h_2,1),(1,g)),
 $$
 $$
\tau_2(\omega)(h)(g_1,g_2)=
$$
$$
\omega((1,g_1),(1,g_2),(h,1))^{-1}\cdot \omega((1,g_1),(h,1),(1,g_2))
 \omega((h,1),(1,g_1),(1,g_2))^{-1}.
$$

\begin{proposition}\label{transfo}
Let $\omega\in \Gamma_3(U_\C,U_\D,\k^\times)$. Then 
the action of $\omega$ from Lemma \ref{act0} transforms $\widehat{\phi}$ and $\widehat{\psi}$ as follows: 
$$
\widehat{\phi}\mapsto \widehat{\phi}\cdot \tau_1(\omega);\ \widehat{\psi}\mapsto \widehat{\psi}\cdot \tau_2(\omega).
$$   
\end{proposition}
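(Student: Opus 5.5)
Fix the twisted Deligne product $\E$ and a normalized representative $\omega$ of a class in $\Gamma_3(U_\C,U_\D,\k^\times)$. First we normalize $\omega$ so that its restrictions to $U_\C\times 1$ and $1\times U_\D$ are identically $1$; this is possible by adding a coboundary, and it does not change the class of $\omega$. By Lemma \ref{act0} this leaves the tensor structures of $\C$ and $\D$ inside $\E^\omega$ unchanged. The new category $\E^\omega$ has the same objects and tensor product as $\E$. Its associativity is $\alpha^\omega=\omega\cdot\alpha$ on homogeneous simples. We keep the same commutativity isomorphisms $\xi_{YX}$. Then $\phi^\omega_Y$ and $\psi^\omega_X$ are again the identity as additive functors. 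So it suffices to recompute $\mathcal J(\phi^\omega_Y)$ and $\mathcal J(\psi^\omega_X)$ from the identities \eqref{xiiden}, now in $\E^\omega$.

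The main step is to redo the hexagon-type diagram defining the first identity of \eqref{xiiden}. Take $Y\in\D_g$, $W_1\in\C_{h_1}$, $W_2\in\C_{h_2}$. Along the path $Y\otimes(W_1\otimes W_2)\to(W_1\otimes W_2)\otimes Y$ three associativity isomorphisms appear, and each now carries an extra factor of $\omega$:
\begin{itemize}
\item $\alpha^{-1}_{Y,W_1,W_2}$ gains $\omega((1,g),(h_1,1),(h_2,1))^{-1}$;
\item $\alpha_{W_1,Y,W_2}$ gains $\omega((h_1,1),(1,g),(h_2,1))$;
\item $\alpha^{-1}_{W_1,W_2,Y}$ gains $\omega((h_1,1),(h_2,1),(1,g))^{-1}$.
\end{itemize}
The crossings $\xi$ are unchanged. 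Solving for the new $\mathcal J(\phi^\omega_Y)_{W_1W_2}$ therefore multiplies $\mathcal J(\phi_Y)_{W_1W_2}$ by an alternating product of these three values. Up to the orientation convention, which must be matched against the sign in the definition of $\tau_1$, this product is exactly $\tau_1(\omega)(g)(h_1,h_2)$. Since the factor depends only on $(\deg W_1,\deg W_2)$, it is $\theta$ applied to a $2$-cochain on $U_\C$.

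This $2$-cochain is a cocycle: the cocycle condition for $\omega$, together with the vanishing of $\omega$ on $U_\C\times1$, gives it. Hence $\widehat\phi^\omega_Y=\widehat\phi_Y\cdot\theta(\tau_1(\omega)(\deg Y))$. The product makes sense because $\operatorname{Im}\theta$ is central by Lemma \ref{inje}, and this is precisely $\widehat\phi\mapsto\widehat\phi\cdot\tau_1(\omega)$.

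The second identity of \eqref{xiiden} is handled in the same way. For $V_1\in\D_{g_1}$, $V_2\in\D_{g_2}$, $X\in\C_h$, the associators involved are $\alpha_{V_1,V_2,X}$, $\alpha^{-1}_{V_1,X,V_2}$ and $\alpha_{X,V_1,V_2}$. The factor they produce is the stated formula for $\tau_2(\omega)(h)(g_1,g_2)$, giving $\widehat\psi\mapsto\widehat\psi\cdot\tau_2(\omega)$.

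Finally we check independence of choices. Changing $\omega$ by a coboundary that is trivial on both factors amounts to rescaling the $\xi_{YX}$ by scalars depending on degrees. By \eqref{tran1} and \eqref{tran2}, this changes the $\mathcal J$'s only by coboundaries, i.e. by inner terms that vanish in $\Aut_1$.

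The main obstacle is bookkeeping of conventions, and this is where I would spend the effort. The diagrams in \eqref{xiiden} contain the associator composites $(\mathcal J\otimes\id)\alpha^{-1}$, so I would carefully track which associator appears with which exponent. The signs must come out matching $\tau_1,\tau_2$ exactly rather than their inverses. They must also agree with the edge-map representatives of Example \ref{ex1}, which depend on the ordering conventions for $U_\C\times U_\D$. To fix the signs, I would test the computation on $\C=\Vec_{U_\C}$, $\D=\Vec_{U_\D}$ with $\omega$ a product of characters. In that case everything reduces to explicit scalars.
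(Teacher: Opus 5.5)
Your proposal is correct and is essentially the paper's argument: keep $\xi_{YX}$ and the additive functors fixed, recompute both diagrams of \eqref{xiiden} with the associator multiplied by $\omega$, and read off the change in $\mathcal J(\phi_Y)$ and $\mathcal J(\psi_X)$ on homogeneous blocks. The sign bookkeeping you flag is already settled by the exponents you listed, so no test case is needed: since $\xi_{Y,W_1\otimes W_2}$ is unchanged, the new $\mathcal J(\phi_Y)$ times the product of your three factors equals the old one, so $\mathcal J(\phi_Y)$ is multiplied by the inverse of that product, which is exactly the paper's representative $\tau_1(\omega)(g)(h_1,h_2)$, and the same computation with the second diagram gives $\tau_2(\omega)(h)(g_1,g_2)$.
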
 

\begin{proof}
The functors $\phi_Y$ and $\psi_X$ are unchanged as additive functors; only their tensor structures change.  For $Y\in\D_g$, compare the first diagram in \eqref{xiiden} before and after multiplying the associator by $\omega$.  The three associators in that diagram contribute respectively
\[
\begin{gathered}
\omega((1,g),(h_1,1),(h_2,1))^{-1},\quad
\omega((h_1,1),(1,g),(h_2,1)),\\
\omega((h_1,1),(h_2,1),(1,g))^{-1}.
\end{gathered}
\]
Therefore $\mathcal J(\phi_Y)$ is multiplied by the inverse of this product, i.e. by $\tau_1(\omega)(g)(h_1,h_2)$.

Similarly, for $X\in\C_h$, the second diagram in \eqref{xiiden} shows that $\mathcal J(\psi_X)$ is multiplied by \scriptsize
\[
\omega((1,g_1),(1,g_2),(h,1))^{-1}
\omega((1,g_1),(h,1),(1,g_2))
\omega((h,1),(1,g_1),(1,g_2))^{-1},
\] \normalsize
which is $\tau_2(\omega)(h)(g_1,g_2)$ with the convention above.  Passing to classes in $\Aut_1$ gives the stated formulas.
\end{proof} 

Note that this induced action on $\widehat\phi,\widehat\psi$ is, in general, not faithful, since the kernel $K:=K(U_\C,U_\D)$ of $\tau_1\oplus \tau_2$ (described in Example \ref{ex1}(2)) acts trivially on these classes.  Thus $\widehat\phi$ and $\widehat\psi$ do not detect the whole $\Gamma_3$-action.  To see the action of $K$, one must keep the actual tensor structures $\mathcal J(\phi_Y)$ and $\mathcal J(\psi_X)$, modulo the renormalizations \eqref{tran1}, \eqref{tran2}.  By Corollary~\ref{dete}, these data determine the twisted product up to equivalence, so it remains to compute how $K$ changes these representatives.

By Example \ref{ex1}(2), the group $K$ may be interpreted as the kernel of the natural map 
$\eta: H^2(U_\D,H^1(U_\C,\bold k^\times))\to H^1(U_\C,H^2(U_\D,\bold k^\times))$ given on cocycles by 
$\eta(\gamma)(h)(g_1,g_2)=\gamma(g_1,g_2)(h)$, $g_1,g_2\in U_\D, h\in U_\C$. 
Thus $K$ consists of images $\gamma_*$ of 2-cocycles $\gamma$ on $U_\D$ with coefficients 
in $\Hom(U_\C,\bold k^\times)$ such that $\gamma(-,-)(h)$ is a coboundary 
for each $h\in U_\C$, but $\gamma$ itself is not necessarily a coboundary. 
Let $\lambda(h,g)$ be a function on $U_\C\times U_\D$ such that 
for each $h$, the cocycle $\gamma(-,-)(h)$ is the coboundary of $\lambda(h,-)$. 
Then $\lambda$ is defined up to multiplication by a character $\mu\in \Hom(U_\D,{\rm Fun}(U_\C,\bold k^\times))$ of $U_\D$ with values in the group of $\k^\times$-valued functions 
on $U_\C$. 

\begin{proposition}\label{act} The action of $\gamma$ on $\phi$ and $\psi$ in Lemma \ref{act0} (up to equivalence) is as follows: 
$\psi$ remains the same and, for $Y\in \D_g$, the tensor structure $\mathcal J(\phi_Y)$ on the block $\Hom(X_3,X_1\otimes X_2)$ is multiplied by the scalar 
$\lambda(h_1,g)\lambda(h_2,g)/\lambda(h_1h_2,g),\ h_a=\deg X_a.$
\end{proposition}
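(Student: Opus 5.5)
The plan is to write down an explicit normalized $3$-cocycle representing $\gamma_*\in\Gamma_3(U_\C,U_\D,\k^\times)$, choose it so that it vanishes on every triple that enters the second identity of \eqref{xiiden}, and then apply Proposition~\ref{transfo} at the cochain level. The derivation of that proposition never passes to classes until its last line, so on cochains it says the following. For $Y\in\D_g$, $\mathcal J(\phi_Y)$ on the block $\Hom(X_3,X_1\otimes X_2)$ with $h_a=\deg X_a$ is multiplied by
$$
\omega((1,g),(h_1,1),(h_2,1))\,\omega((h_1,1),(1,g),(h_2,1))^{-1}\,\omega((h_1,1),(h_2,1),(1,g)).
$$
For $X\in\C_h$, $\mathcal J(\psi_X)$ is multiplied by the analogous product of three values of $\omega$ on $(1,g_1),(1,g_2),(h,1)$ in the three orders.

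The first step is to fix the cochain. The Eilenberg--Zilber/Alexander--Whitney comparison identifies $H^2(U_\D,H^1(U_\C,\k^\times))$ with a piece of $H^3(U_\C\times U_\D,\k^\times)$. A natural representative is
$$
\omega((h_1,g_1),(h_2,g_2),(h_3,g_3))=\gamma(g_1,g_2)(h_3),
$$
which depends only on the $U_\D$-components of the first two arguments and the $U_\C$-component of the last. I will check directly that it is a normalized $3$-cocycle. This uses the cocycle identity of $\gamma$ together with the fact that each $\gamma(g_1,g_2)$ is a character of $U_\C$. Its restrictions to $U_\C$ and to $U_\D$ are identically $1$, so the action of Lemma~\ref{act0} leaves $\C$ and $\D$ literally unchanged. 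The main obstacle I expect is sign and ordering conventions: I must make sure that this cochain represents the class $\gamma_*$ as defined through Example~\ref{ex1}(2), and not, say, its transpose with the variables $(g,h)$ reversed. I would settle this by comparing with the edge map $\xi_{1,2}$ / the map $\eta$ in the same bar model. If necessary, I would replace $\gamma$ by the appropriate cohomologous variant; this changes the final formula only by an equivalence of the type in \eqref{tran1}.

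The second step is to evaluate the two products. In the $\phi$-product every triple has a trivial $U_\D$-component among its first two arguments, except $((1,g),(h_1,1),(h_2,1))$. That factor equals $\gamma(g,1)(h_2)=1$ by normalization, so the $\phi$-product is $1$ on the nose. In the $\psi$-product, the only factor with two $U_\D$-entries in front and a $U_\C$-entry last is $\omega((1,g_1),(1,g_2),(h,1))^{-1}=\gamma(g_1,g_2)(h)^{-1}$; the other two factors are $1$. So with this representative $\psi$ changes and $\phi$ does not. Hence I will instead use the cohomologous representative $\omega\cdot d\nu$, where
$$
\nu((h_1,g_1),(h_2,g_2))=\lambda(h_2,g_1)^{\pm1}.
$$
The sign is chosen so that, on triples of the form $((1,g_1),(1,g_2),(h,1))$, $d\nu$ contributes $\gamma(g_1,g_2)(h)$. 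This is possible precisely because $\gamma(-,-)(h)=d\lambda(h,-)$, which is the defining property of $\gamma$ lying in $K$. The cochain $d\nu$ is still trivial on pure $U_\C$- and pure $U_\D$-triples, since $\nu$ vanishes when either argument is trivial in the relevant factor.

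The third step is to recompute with $\omega\cdot d\nu$. For the $\psi$-product, the three contributions from $d\nu$ are of the form $\lambda(h,g_1g_2)/(\lambda(h,g_1)\lambda(h,g_2))$. They exactly cancel the factor $\gamma(g_1,g_2)(h)^{-1}$, so $\psi$ is unchanged. For the $\phi$-product, the terms of $d\nu$ on the three mixed triples leave, after cancellation, only the factor $\lambda(h_1,g)\lambda(h_2,g)/\lambda(h_1h_2,g)$ coming from the term $\nu(\,\cdot\,,(h_1h_2,\ldots))$. This is the claimed formula. Finally, changing $\lambda$ by a character $\mu$ as in the remark before the statement alters the result only by a rescaling of the type \eqref{tran1}, with $\lambda_{YX}=\mu(\deg Y)(\deg X)$. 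So the answer is well defined up to equivalence by Corollary~\ref{dete}.
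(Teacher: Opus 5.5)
Your argument is correct. Its first two steps coincide with the paper's: you use the same representative $\omega_\gamma((h_1,g_1),(h_2,g_2),(h_3,g_3))=\gamma(g_1,g_2)(h_3)$ and the same cochain-level reading of Proposition~\ref{transfo}. This gives no change in $\phi$ and the factor $\gamma(g_1,g_2)(h)^{-1}$ on $\mathcal J(\psi_X)$.

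You differ only in how this $\psi$-change is undone. The paper keeps $\omega_\gamma$ and renormalizes $\xi_{YX}\mapsto\lambda(\deg X,\deg Y)^{-1}\xi_{YX}$. It then reads off both effects in one line from \eqref{tran2} and \eqref{tran1}, with Corollary~\ref{dete} supplying the equivalence. You instead keep $\xi$ and pass to the cohomologous representative $\omega_\gamma\,d\nu$. The sign that works is $\nu((h_1,g_1),(h_2,g_2))=\lambda(h_2,g_1)^{-1}$. Of the six triples you evaluate, $d\nu$ is then nontrivial only on $((1,g_1),(1,g_2),(h,1))$ and $((1,g),(h_1,1),(h_2,1))$. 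These give exactly the cancellation in $\psi$ and the factor $\lambda(h_1,g)\lambda(h_2,g)/\lambda(h_1h_2,g)$ in $\phi$.

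The two devices are the same gauge change seen from two sides. The identity functor with tensor structure built from $\nu$ is an equivalence $\E^{\omega_\gamma}\simeq\E^{\omega_\gamma d\nu}$, and it carries $\lambda(\deg X,\deg Y)^{-1}\xi_{YX}$ to $\xi_{YX}$. The paper's version is shorter, since \eqref{tran1} and \eqref{tran2} are already available. Yours stays entirely inside the twisting-by-$3$-cocycles picture of Lemma~\ref{act0} and exhibits a representative of the class for which $\psi$ is literally unchanged. The price is evaluating $d\nu$ on six triples and checking that $d\nu$ restricts trivially to $U_\C$ and $U_\D$.

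Your justification for that last check is incomplete. The identity $\lambda(h,1)=1$ does follow from the normalization of $\gamma$. But $\lambda(1,-)$ is only known to be a character of $U_\D$, because each $\gamma(g_1,g_2)$ is a character of $U_\C$. You must first divide it out using the ambiguity $\mu$. Otherwise $d\nu$ restricted to $U_\D$ is the nontrivial cochain $(g_1,g_2,g_3)\mapsto\lambda(1,g_1)$, and the embedded copy of $\D$ is altered.
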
 

\begin{proof}
In the normalized bar model we may represent the class of $\gamma$ by the $3$-cocycle
\[
        \omega_\gamma((h_1,g_1),(h_2,g_2),(h_3,g_3))=\gamma(g_1,g_2)(h_3).
\]
With this representative, $\tau_1(\omega_\gamma)=1$ and
$\tau_2(\omega_\gamma)(h)(g_1,g_2)=\gamma(g_1,g_2)(h)^{-1}$.
Choose $\lambda$ so that
\[
        \gamma(g_1,g_2)(h)=\lambda(h,g_1)\lambda(h,g_2)\lambda(h,g_1g_2)^{-1}.
\]
Then the change in $\mathcal J(\psi_X)$ is removed by the renormalization
$$
        \xi_{YX}\mapsto \lambda(\deg X,\deg Y)^{-1}\xi_{YX}.
$$
Using \eqref{tran2}, this renormalization multiplies $\mathcal J(\psi_X)$ by $d\lambda(h,-)$, so $\psi$ returns to its original representative.  By \eqref{tran1}, the same renormalization changes $\mathcal J(\phi_Y)$ by the factor
$
        \lambda(h_1,g)\lambda(h_2,g)/\lambda(h_1h_2,g),
$
as claimed.
\end{proof} 

Note that if we change $\lambda$ to another representative $\lambda'$, then $\lambda/\lambda'=\mu$ is a character with 
respect to $g$. Hence by Corollary \ref{dete}, changing $\phi$ by $\mu(h_1,g)\mu(h_2,g)/\mu(h_1h_2,g)$
is a trivial transformation. 
Also, if $\gamma$ is a coboundary
then $\lambda$ can be chosen a character with respect to $h$, so the action we defined is trivial.  
Thus the action of Proposition \ref{act} is well defined.  

\begin{example} Suppose that $U_{\mathcal C}$ and $U_{\mathcal D}$ are finite and $\k=\mathbb C$.
Put $A=(U_{\mathcal C})_{\rm ab}$ and $B=(U_{\mathcal D})_{\rm ab}$.
The K\"unneth theorem, together with the universal coefficient theorem, identifies the kernel
$K=\ker(\tau_1\oplus \tau_2)$
with
$\operatorname{Hom}(\operatorname{Tor}(A,B),\mathbb C^\times)$.

Equivalently, since the exponential map gives a canonical identification
$\mathbb Q/\mathbb Z \xrightarrow{\sim} \mu_\infty\subset \mathbb C^\times,
\ t\mapsto e^{2\pi i t}$,
and all characters of finite groups have values in $\mu_\infty$, we may write
\[
K\cong \operatorname{Tor}(A,B)^*
\cong A^*\otimes B^*
\cong H^1(U_{\mathcal C},\mathbb C^\times)\otimes H^1(U_{\mathcal D},\mathbb C^\times),
\]
where for a finite abelian group $L$, $L^*:=\operatorname{Hom}(L,\mathbb Q/\mathbb Z)$, which is 
identified with $\Hom(L,\mathbb C^\times)$ by the exponential map. Thus this identification is canonical over $\mathbb C$ (as a topological field).

In terms of this realization, the map $\gamma\mapsto\lambda$ can be described (on decomposable tensors) as follows. 
Let \(c:U_\C\to\mathbb C^\times\) and \(d:U_\D\to\mathbb C^\times\) be characters and $\gamma_*=c\otimes d$.
  Choose functions \(L_c,L_d\), with \(L_c(1)=L_d(1)=0\), such that \(c(h)=\exp L_c(h)\) and \(d(g)=\exp L_d(g)\), and set
\[
        \lambda(h,g)=\exp\!\left(\frac{L_c(h)L_d(g)}{2\pi i}\right).
\]
Then
\[
        \gamma(g_1,g_2)(h)=
        \lambda(h,g_1)\lambda(h,g_2)\lambda(h,g_1g_2)^{-1}
\]
is a representative of the corresponding element of \(K\).  

The dependence on the choices of logarithms is as follows. If
$L_c'(h)=L_c(h)+2\pi iM(h)$, then
\[
        \frac{\lambda'(h,g)}{\lambda(h,g)}=d(g)^{M(h)},
\]
which is a character in the $g$-variable for every fixed $h$ and is
therefore exactly the ambiguity described above. If instead
$L_d'(g)=L_d(g)+2\pi iN(g)$, then
\[
        \nu(h,g):=\frac{\lambda'(h,g)}{\lambda(h,g)}=c(h)^{N(g)}.
\]
For every fixed $g$, the function $\nu(-,g)$ is a character of
$U_\C$, so $d_{U_\C}\nu=1$. Hence the factor
$d_{U_\C}\lambda$ governing the change of $\phi$ is unchanged, while
\[
        \gamma'=d_{U_\D}\lambda'
        =\gamma\,d_{U_\D}\nu
\]
is cohomologous to $\gamma$ as a $U_\D$-cocycle with values in
$H^1(U_\C,\mathbb C^\times)$. Thus the resulting element of $K$, and
the induced action, are independent of the choices of $L_c$ and $L_d$.
\end{example}

\begin{proposition}\label{equii} Let $\E$, $\E'$ be two twisted products with the same $\widehat{\phi}$ and $\widehat{\psi}$. 

(i) If $\C$ or $\D$ is a pointed category then $\E$ and $\E'$ differ by twisting the associativity by an element of $K(U_\C,U_\D)$.

(ii)  If $U_\C$ or $U_\D$ is trivial then $\E\cong \E'$. 
\end{proposition}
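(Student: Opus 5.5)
By Corollary~\ref{dete}, I would work directly with the tensor structures $\mathcal J(\phi_Y)$ and $\mathcal J(\psi_X)$, modulo the renormalizations \eqref{tran1}, \eqref{tran2}. Since $\widehat\phi$ and $\widehat\psi$ agree for $\E$ and $\E'$, for each simple $Y$ the structures $\mathcal J(\phi_Y)$ and $\mathcal J'(\phi_Y)$ differ by a coboundary on $\C$. So there are scalars $\mu_Y(X)$, $X\in{\rm Irr}(\C)$, with
\[
\mathcal J'(\phi_Y)_{\Hom(X_3,X_1\otimes X_2)}=\mathcal J(\phi_Y)_{\Hom(X_3,X_1\otimes X_2)}\,\mu_Y(X_3)\mu_Y(X_1)^{-1}\mu_Y(X_2)^{-1}.
\]
Renormalizing $\xi'_{YX}$ by $\lambda_{YX}=\mu_Y(X)^{-1}$ via \eqref{tran1}, I may assume $\mathcal J'(\phi_Y)=\mathcal J(\phi_Y)$ for all $Y$. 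The remaining freedom is to rescale by $\lambda_{YX}$ that are characters in $X$, i.e. $\lambda_{YX}=\lambda(\deg X, Y)$ with $\lambda(-,Y)\in\Hom(U_\C,\k^\times)$.

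Now compare the $\psi$'s. Put $\epsilon_X:=\mathcal J'(\psi_X)\mathcal J(\psi_X)^{-1}$, a tensor automorphism of ${\rm Id}_\D$ acting on $\Hom(Y_m,Y_p\otimes Y_q)$. By hypothesis it is trivial in $\Aut_1(\D)$, so it is a coboundary of some $\nu_X(Y)$. Apply Proposition~\ref{phipsi} to both $\E$ and $\E'$ and divide. Since the $\phi$-side is now identical, I get
\[
\epsilon_k\epsilon_j^{-1}\epsilon_i^{-1}=1 \quad\text{on }\Hom(Y_m,Y_p\otimes Y_q)\ \text{whenever } X_k\subset X_i\otimes X_j.
\]
Hence $X\mapsto\epsilon_X$ is multiplicative on fusion. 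This gives a homomorphism $U_\C\to$ (group of coboundary tensor structures), which is abelian. Writing $\epsilon_X=d\nu_X$, the assignment $X\mapsto \nu_X$ is multiplicative modulo characters of $U_\D$. By Lemma~\ref{homom}-type arguments applied to $\nu_X(Y)$, I obtain a class in $H^2(U_\C,\Hom(U_\D,\k^\times))$, or dually via the $\lambda$ description, a cocycle $\gamma$ on $U_\D$ with values in $\Hom(U_\C,\k^\times)$.

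For (i), assume first that $\D$ is pointed, so $\D=\Vec_{U_\D}^{\omega}$. Then each $\nu_X$ is a function on $U_\D$. The multiplicativity says that $\nu$ lies in the group of functions of the form $\lambda(\deg X,g)$ up to characters, exactly as in Proposition~\ref{act}. I would then identify the discrepancy with $\omega_\gamma$ for some $\gamma\in K$, by reversing the computation in the proof of Proposition~\ref{act}. Concretely, the data $\lambda(h,g)$ with $d_{U_\D}\lambda(h,-)$ a 2-cocycle valued in characters of $h$ gives $\gamma\in K$, and twisting $\E$ by $\gamma$ produces $\E'$. If instead $\C$ is pointed, I would run the symmetric argument with the roles of $\phi$ and $\psi$ swapped, using the $K$-description of Example~\ref{ex1}(2) with factors interchanged.

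For (ii), suppose $U_\D=1$. Then $\Hom(U_\D,\k^\times)=1$, and every coboundary of the form $d\nu_X$ with $\nu_X$ multiplicative in $X$ can be absorbed by a renormalization $\lambda_{YX}=\nu_X(Y)$. This renormalization leaves $\mathcal J(\phi_Y)$ unchanged because $\nu_X(Y)$ depends on $X$ through $U_\C$-characters only, and those die under \eqref{tran1}. The main obstacle is exactly this step: after fixing $\phi$, showing that $\nu_X(Y)$ can be chosen multiplicative in $X$ (a character of $U_\C$ for each $Y$). The ambiguity of $\nu_X$ is a character of $U_\D$, which is trivial when $U_\D=1$, so $\nu_X$ is canonical and multiplicativity of $\epsilon$ forces it. When $U_\C=1$, there are no nontrivial characters in $X$; I would instead first normalize $\psi$, then apply the symmetric argument to $\phi$. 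Then Corollary~\ref{dete} gives $\E\cong\E'$.
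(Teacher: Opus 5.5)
Your proposal is correct and is essentially the paper's proof with the roles of $\phi$ and $\psi$ interchanged. The paper first renormalizes to get $\psi=\psi'$, uses Proposition~\ref{phipsi} and Lemma~\ref{homom} to write the remaining $\phi$-discrepancy as a character of $U_\D$ in $\deg Y$, builds $\gamma\in K(U_\C,U_\D)$ from it when $\C$ is pointed, and reads off (ii) for $U_\D=1$ at once. In your mirrored normalization, the case $U_\D=1$ instead needs your extra (valid) step showing that $\nu_X$ is unique and hence a character of $U_\C$ in $X$. Conversely, the case $U_\C=1$ is already immediate in your normalization, since $\epsilon_X$ factors through $\Hom(U_\C,\k^\times)=1$, so you do not need to switch normalizations for it.
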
 

\begin{proof} Let $\phi,\psi$ be the maps corresponding to $\E$ and $\phi',\psi'$ to $\E'$. 
Renormalizing $\xi_{YX}$, we may assume that $\psi=\psi'$. 
Then 
$$
\phi_Y^{-1}\phi_Y'|_{\Hom(W_3,W_1\otimes W_2)}=\lambda_{YW_1}\lambda_{YW_2}\lambda_{YW_3}^{-1}. 
$$
By Proposition \ref{phipsi}, this is multiplicative in $Y$: indeed, applying Proposition~\ref{phipsi} to $\E$ and to $\E'$ and dividing the two equalities cancels the common $\psi$-defect.  Hence by Lemma \ref{homom} 
$$
\lambda_{YW_1}\lambda_{YW_2}\lambda_{YW_3}^{-1}=\zeta(W_1,W_2,W_3)(\deg Y), 
$$
where $\zeta(W_1,W_2,W_3)\in \Hom(U_\D,\bold k^\times)$. 

(i) If $\C$ is pointed, then 
$\zeta$ depends only on $W_1$ and $W_2$ (as $W_3=W_1\otimes W_2$) and is a 2-cocycle on $U_\C$ with coefficients in $\Hom(U_\D,\bold k^\times)$, 
such that for every $g\in U_\D$, the 2-cocycle $\zeta(-,-)(g)$ is a coboundary.
 This implies the statement.

More explicitly, write the simple objects of $\C$ as $X_h$, $h\in U_\C$, and set
\[
        \lambda(h,g):=\lambda_{Y_gX_h},\qquad g\in U_\D.
\]
The displayed equality says that
\[
        \zeta(h_1,h_2)(g)
        =\lambda(h_1,g)\lambda(h_2,g)\lambda(h_1h_2,g)^{-1}.
\]
Since $\zeta(h_1,h_2)$ is a character of $U_\D$, the cochain
\[
        \gamma(g_1,g_2)(h):=
        \lambda(h,g_1)\lambda(h,g_2)\lambda(h,g_1g_2)^{-1}
\]
takes values in $H^1(U_\C,\bold k^\times)$: indeed $d_{U_\C}\gamma=d_{U_\D}\zeta=1$.  Also $d_{U_\D}\gamma=1$, so $\gamma$ is a $2$-cocycle on $U_\D$ with coefficients in $H^1(U_\C,\bold k^\times)$.  For each fixed $h$, the cocycle $\gamma(-,-)(h)$ is the coboundary $d_{U_\D}\lambda(h,-)$, so the image of $[\gamma]$ under the transposition map $\eta$ of Example~\ref{ex1}(2) is $1$.  Hence $[\gamma]\in K(U_\C,U_\D)$.  Proposition~\ref{act}, applied to this representative and to the cochain $\lambda$, leaves $\psi$ unchanged and multiplies $\phi_{Y_g}$ on the $X_{h_1}\otimes X_{h_2}$ block by
\[
        \lambda(h_1,g)\lambda(h_2,g)\lambda(h_1h_2,g)^{-1},
\]
which is exactly the scalar relating the tensor structures of $\E$ and $\E'$.  Therefore the two twisted products differ by the action of the class $[\gamma]\in K(U_\C,U_\D)$.

 If $\D$ is pointed, the proof is symmetrical. 

(ii) If $U_\D$ is trivial then $\zeta=1$ and the statement follows, and if $U_\C$ is trivial then the proof is symmetrical.   
\end{proof} 

\subsection{Examples} 

\begin{example} If $\C=\Vec_G,\D=\Vec_H$, then $U_\C=G$, $U_\D=H$, $\Aut_1(\C)=H^2(G,\bold k^\times)$, $\Aut_1(\D)=H^2(H,\bold k^\times)$. 
Since both categories are multiplicity free, by Corollary \ref{multfree} $\widehat{\phi}$ and $\widehat{\psi}$ are homomorphisms 
$U_\D\to H^2(U_\C,\bold k^\times)$ and $U_\C\to H^2(U_\D,\bold k^\times)$, respectively. 

Also, because of the action of $\Gamma_3(G,H,\k^\times)$, 
any pair of homomorphisms can arise, as $\tau_1\oplus \tau_2$ is surjective.
Indeed since $\bold k$ is algebraically closed, $\k^\times$ is a divisible group; hence
\[
        H^2(G,\k^\times)\cong \Hom(H_2(G,\mathbb Z),\k^\times),
        \qquad
        H^2(H,\k^\times)\cong \Hom(H_2(H,\mathbb Z),\k^\times).
\]
Consider the transposition map from Example~\ref{ex1}(2), 
\[
        \eta:H^2(G,\Hom(H,\k^\times))\to \Hom(H,H^2(G,\k^\times)).
\]
The universal coefficient exact sequence gives a surjection
\[
        H^2(G,\Hom(H,\k^\times))\twoheadrightarrow
        \Hom(H_2(G,\mathbb Z),\Hom(H,\k^\times)).
\]
After currying, the target is canonically
\[
        \Hom(H,\Hom(H_2(G,\mathbb Z),\k^\times))=
        \Hom(H,H^2(G,\k^\times)),
\]
and this is exactly the map induced by $\eta$.  Thus $\eta$ is surjective.  Now choose a class in $\Gamma_3(G,H,\k^\times)$ with the prescribed value of one edge map; the difference between its other edge-map value and the desired one can be corrected by adding an element in the kernel of the first edge map, and the preceding surjectivity of $\eta$ realizes this correction.  Hence $\tau_1\oplus\tau_2$ is surjective.

 Finally, once these homomorphisms 
have been fixed, by Proposition \ref{equii} the possible twisted products are parametrized by a torsor over the group $K$. This torsor is in fact nonempty, since in this case twisted products $\E=\C\bullet \D$ are parametrized by $\Gamma_3(G,H,\k^\times)$ (Example \ref{groupprod}). 

More generally, if $\C=\Vec_G^{\omega_G},\D=\Vec_H^{\omega_H}$, the story is the same, with associator of $\E$ modified by $\omega_G$ and $\omega_H$. This recovers Example \ref{groupprod} in full generality. 
\end{example} 

\begin{example} \label{smashpro}
Let \(\D=\Vec_G\), and denote the simple invertible object corresponding to \(g\in G\) again by \(g\).  If \(\E=\C\bullet \Vec_G\) is a twisted Deligne product, then conjugation by \(g\),
$\phi_g(X)=g\otimes X\otimes g^{-1}, X\in\C$ defines a categorical action \(\phi:G\to\Aut(\C)\) preserving isomorphism classes of simple objects.  

Conversely, any categorical action \(\phi:G\to\Aut(\C)\) preserving simple objects defines a semidirect product \(\C\rtimes \Vec_G\) (\cite{EGNO}, Definition 4.15.5).  Its underlying abelian category is \(\C\boxtimes \Vec_G\),
 and its tensor product is characterized by the crossing relation
$g\otimes X=\phi_g(X)\otimes g$.
This category contains \(\Vec_G\) and \(\C\) as tensor subcategories, and it is a twisted Deligne product of them.  These two constructions are inverse to each other.

Let us compare this with the cohomological invariants above.  Choose, for every simple \(X\in\C\), isomorphisms \(u_{g,X}:\phi_g(X)\to X\).  The monoidal coherence of the action then gives a projective \(G\)-action on \(X\), hence a class $\eta_X\in H^2(G,\k^\times)$
independent of the choices of the \(u_{g,X}\).  Moreover, the induced projective action of \(G\) on \(\Hom(V_3,V_1\otimes V_2)\) has 2-cocycle
$\eta_{V_1}\eta_{V_2}\eta_{V_3}^{-1}$.
Our first invariant of the action is just the homomorphism
$\phi: G\to\Aut_1(\C)$. The other invariant is the collection \(\widehat\psi_X\in H^2(G,\k^\times)\).  The element
$\widetilde\psi_X:=\widehat\psi_X\eta_X^{-1}$ 
depends only on \(\deg X\) and defines a homomorphism
$\widetilde\psi:U_\C\to H^2(G,\k^\times)$ (Theorem \ref{vani}). 
The action of \(\Gamma_3(U_\C,G,\k^\times)\) changes \(\widetilde\psi\) by the edge map \(\tau_2\) (Proposition \ref{transfo}),
so \(\widetilde\psi\) can be prescribed arbitrarily.  For a fixed \(\widehat \phi\), the obstruction to realizing the corresponding object-preserving categorical action lies in \(H^3(G,H^1(U_\C,\k^\times))\); when it vanishes, the choices form a torsor over \(H^2(G,H^1(U_\C,\k^\times))\).  Under the natural map
$H^2(G,H^1(U_\C,\k^\times))\to H^1(U_\C,H^2(G,\k^\times))$
this torsor maps to the possible values of \(\widetilde\psi\), and its kernel is precisely \(K(U_\C,G)\), in agreement with Proposition~\ref{act}.

More generally, we can take $\D=\Vec_G^\alpha$ where $\alpha: G^3\to \k^\times$ is a 3-cocycle. Then the story is the same, with associator of $\E$ modified by $\alpha$.
\end{example}

\begin{remark} Example \ref{smashpro} shows that the action of Proposition \ref{transfo} need not be transitive. Namely, for a finite group $H$ let ${\rm Aut}_c(H)$ be the group 
of automorphisms of $H$ which act trivially on the set of conjugacy classes of $H$, and 
let ${\rm Inn}(H)$ be its normal subgroup of inner automorphisms. As explained in \cite{GK}, 
(proof of Proposition 1.4), there exist $p$-groups $H$ for which  ${\rm Aut}_c(H)/{\rm Inn}(H)$ 
is non-abelian. Let $G:={\rm Aut}_c(H)$, and consider the twisted Deligne product 
$\Rep(H)\rtimes \Vec_G$. Then we have a natural homomorphism $\overline \phi: G\to \Aut_1(\Rep(H))/{\rm Im}\theta$, which is nontrivial since $G/{\rm Inn}(H)\subset \Aut_1(\Rep(H))$ is non-abelian. However, $\overline \phi$ does not change under the action of $\Gamma_3$.
Thus the twisted Deligne product $\Rep(H)\rtimes \Vec_G$ is not equivalent under this action 
to the usual Deligne product $\Rep(H)\boxtimes \Vec_G$.
\end{remark}

\subsection{Classification theorems} 

\begin{theorem}\label{part1} Suppose that $U_\C=U_\D=1$. Then any twisted Deligne product $\E=\C\bullet \D$ is equivalent to the ordinary Deligne product $\C\boxtimes \D$.  
\end{theorem}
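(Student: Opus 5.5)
The plan is to show that, after a suitable renormalization of the crossing isomorphisms $\xi_{YX}$, all the tensor structures $\mathcal J(\phi_Y)$ and $\mathcal J(\psi_X)$ become trivial. Corollary \ref{dete} then identifies $\E$ with $\C\boxtimes\D$, tautologically on $\C$ and $\D$, since in $\C\boxtimes\D$ these structures are trivial. The two hypotheses $U_\C=1$ and $U_\D=1$ will be used to kill the classes $\widehat\phi_Y$ and $\widehat\psi_X$ separately.

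First, since $U_\D=1$, Proposition \ref{cocyc}(i) gives $\widehat\phi_m\equiv\widehat\phi_p\widehat\phi_q$ modulo $\Im\theta$ whenever $Y_m\subset Y_p\otimes Y_q$, and $\overline\phi$ is a homomorphism from the trivial group. Since $U_\C=1$ as well, $H^2(U_\C,\k^\times)=0$, so $\Im\theta$ is trivial. Hence $\widehat\phi$ is constant on $\mathrm{Irr}(\D)$ and equals $\widehat\phi_{\bold 1}=1$. Symmetrically, Proposition \ref{cocyc}(ii) gives $\widehat\psi_X=1$ for all simple $X$. (Theorem \ref{vani} is not needed here, because the obstruction groups vanish.)

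Second, $\widehat\phi_Y=1$ means that $\mathcal J(\phi_Y)$ is trivial on each block $\Hom(W_3,W_1\otimes W_2)$ up to a coboundary $\mu_{YW_3}\mu_{YW_1}^{-1}\mu_{YW_2}^{-1}$. Using \eqref{tran1}, I renormalize $\xi_{YX}$ by $\lambda_{YX}=\mu_{YX}^{-1}$ so that $\mathcal J(\phi_Y)=\id$ for all $Y$. This simultaneously changes $\mathcal J(\psi_X)$ by \eqref{tran2}, but $\widehat\psi_X$ stays trivial, so $\mathcal J(\psi_X)$ remains a coboundary. I then apply Proposition \ref{phipsi}: its left side is now $1$, so the scalar defect $\psi_k^{-1}\psi_j\psi_i$ is trivial on every block whenever $X_k\subset X_i\otimes X_j$.

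Now write $\mathcal J(\psi_X)$ on $\Hom(V_3,V_1\otimes V_2)$ as $\nu_{V_3X}\nu_{V_1X}^{-1}\nu_{V_2X}^{-1}$. The $\nu$ are determined up to multiplication by a function $\chi_X$ on $\mathrm{Irr}(\D)$ that is multiplicative along fusion, and by Lemma \ref{homom} any such function factors through $U_\D=1$, so $\chi_X\equiv1$. This makes $\nu$ canonically determined. The triviality of the defect then says that the ratio $\nu_{V,X_k}/(\nu_{V,X_i}\nu_{V,X_j})$, viewed as a function of $V$, is multiplicative in the fusion of $\D$, hence identically $1$. I would treat this step as the main obstacle: making precise that the decomposition of $\mathcal J(\psi_X)$ into $\nu$'s is unique, and that the triviality of the defects from Proposition \ref{phipsi} translates into $\nu_{V,\cdot}$ being ``multiplicative in $X$''. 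If that fails, I would fall back on phrasing the argument via Proposition \ref{equii}(ii) applied with $\E'=\C\boxtimes\D$.

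Finally, I renormalize $\xi_{YX}$ by $\nu_{YX}^{-1}$ to make $\mathcal J(\psi_X)=\id$. By \eqref{tran1}, this multiplies $\mathcal J(\phi_Y)$ by $\nu_{YW_3}\nu_{YW_1}^{-1}\nu_{YW_2}^{-1}$. That factor is trivial precisely by the multiplicativity established in the previous step, so $\mathcal J(\phi_Y)$ stays trivial. All the data now coincide with those of $\C\boxtimes\D$, and Corollary \ref{dete} finishes the proof.

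Equivalently, and more cleanly, once both $\widehat\phi$ and $\widehat\psi$ are known to be trivial, $\E$ and $\C\boxtimes\D$ have the same $\widehat\phi$ and $\widehat\psi$. Proposition \ref{equii}(ii) then applies directly, since $U_\C=1$, and gives $\E\cong\C\boxtimes\D$. This is the route I would actually write up; the explicit renormalization above serves as the check behind it.
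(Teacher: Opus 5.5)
Your proposal is correct and matches the paper's proof. The paper also uses Proposition \ref{cocyc} together with $U_\C=U_\D=1$ (so $\overline\phi,\overline\psi$ are trivial and $\Im\theta=1$) to get $\widehat\phi=\widehat\psi=1$, and then concludes via Proposition \ref{equii}(ii) with $\E'=\C\boxtimes\D$, where the twisting group $K(U_\C,U_\D)$ is trivial. Your explicit renormalization argument is an unwinding of the proof of Proposition \ref{equii}(ii), and it also goes through.
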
 

\begin{proof} Since $U_\C=U_\D=1$, the maps $\overline{\phi}$ and $\overline{\psi}$ are trivial. 
Also the groups $H^2(U_\C,\bold k^\times)$ and $H^2(U_\D,\bold k^\times)$ are trivial, so the maps $\widehat{\phi}$ and $\widehat{\psi}$ are trivial. 
By Proposition \ref{equii}(ii), the twisted product $\E$ differs from $\C\boxtimes \D$ by twisting by an element of $K(U_\C,U_\D)=1$. This implies the statement. 
\end{proof} 

We also have the following theorem. 

\begin{theorem}\label{part2} Assume that $U_\C=1$ and $\Aut_1(\C)=1$, or $U_\D=1$ and $\Aut_1(\D)=1$. Then any twisted Deligne product $\E=\C\bullet \D$ is equivalent to the ordinary Deligne product $\C\boxtimes \D$. 
\end{theorem}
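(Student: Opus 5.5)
By symmetry, I will assume $U_\C=1$ and $\Aut_1(\C)=1$; the other case follows by interchanging the roles of $\C$ and $\D$ throughout. The plan is to show that $\widehat\phi$ and $\widehat\psi$ of $\E$ coincide with those of $\C\boxtimes\D$, and then to invoke Proposition~\ref{equii}(ii). That proposition applies because $U_\C=1$, and it gives $\E\cong\C\boxtimes\D$, tautologically on $\C$ and $\D$.

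\emph{Step 1: the $\phi$-data.} Each $\widehat\phi_Y$ is an element of $\Aut_1(\C)=1$, so all the $\widehat\phi_Y$ are trivial. They therefore agree with the $\phi$-data of $\C\boxtimes\D$, where the crossing isomorphisms are the canonical ones and every $\mathcal J(\phi_Y)$ is the identity.

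\emph{Step 2: the $\psi$-data.} Here $\widehat\psi_X\in\Aut_1(\D)$, and this group need not be trivial, so an argument is required. Since $U_\C=1$, every simple $X\in\C$ satisfies $X\sim\bold 1$. By Proposition~\ref{cocyc}(ii), $\widehat\psi$ descends to an anti-homomorphism $\overline\psi:U_\C\to\Aut_1(\D)/H^2(U_\D,\k^\times)$, which is trivial because $U_\C=1$. Hence every $\widehat\psi_X$ lies in $\Im\theta\cong H^2(U_\D,\k^\times)$.

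I then want to show that this class is trivial, using Proposition~\ref{phipsi}. Take $Y_m\subset Y_p\otimes Y_q$ and $X_k\subset X_i\otimes X_j$, and choose the representatives $\mathcal J(\phi_Y)$ to be the identity. This choice is legitimate: $\widehat\phi_Y=1$, so each $\mathcal J(\phi_Y)$ equals $d\lambda_Y$ for some scalars $\lambda_Y$, and renormalizing $\xi$ via \eqref{tran1} (which changes $\psi$ only within its class) makes $\mathcal J(\phi_Y)=\id$. With this choice the left-hand side of \eqref{equall} is $1$. Consequently
$$\psi_k^{-1}\psi_j\psi_i|_{\Hom(Y_m,Y_p\otimes Y_q)}=1,$$
so $X\mapsto\mathcal J(\psi_X)$ is multiplicative at the cochain level along fusion in $\C$.

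Now write $\mathcal J(\psi_X)=\beta_X(\deg V_1,\deg V_2)\cdot d\mu_X$. The multiplicativity shows that $X\mapsto[\beta_X]\in H^2(U_\D,\k^\times)$ satisfies $[\beta_k]=[\beta_i][\beta_j]$ whenever $X_k\subset X_i\otimes X_j$, with values in an abelian group. Lemma~\ref{homom} then turns this into a homomorphism $U_\C\to H^2(U_\D,\k^\times)$, which is trivial since $U_\C=1$. So $[\beta_X]=[\beta_{\bold 1}]=1$, and each $\widehat\psi_X=1$.

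\emph{Step 3 and the main obstacle.} After Steps 1 and 2, $\E$ and $\C\boxtimes\D$ have the same $\widehat\phi$ and $\widehat\psi$, and Proposition~\ref{equii}(ii) finishes the proof. The delicate point is Step 2: the cochain-level multiplicativity must be extracted cleanly from \eqref{equall} once the $\phi$'s have been normalized to the identity. One must also check that the normalization of Step 1 is compatible with the renormalizations allowed in \eqref{tran1}--\eqref{tran2}, so that the resulting classes $[\beta_X]$ are well defined.
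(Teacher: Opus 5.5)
Your proposal is correct and follows essentially the same route as the paper: $\Aut_1(\C)=1$ kills $\widehat\phi$, Proposition~\ref{phipsi} then makes $X\mapsto\widehat\psi_X$ multiplicative along fusion in $\C$, Lemma~\ref{homom} together with $U_\C=1$ forces $\widehat\psi=1$, and Proposition~\ref{equii}(ii) concludes. The only difference is your preliminary use of Proposition~\ref{cocyc}(ii) to place each $\widehat\psi_X$ in $\Im\theta$, so that you can work in the abelian group $H^2(U_\D,\k^\times)$; this is harmless but unnecessary, since the paper applies Lemma~\ref{homom} directly to the anti-homomorphism $U_\C\to\Aut_1(\D)$.
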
 

\begin{proof} Let $U_\C=1$ and $\Aut_1(\C)=1$ (the other case is symmetrical). 
Then we have $\widehat\phi=1$. Thus by Proposition \ref{phipsi}, $\widehat \psi_k^{-1}\widehat \psi_j\widehat \psi_i=1$ for $X_k\subset X_i\otimes X_j$.
Hence by Lemma \ref{homom}, $\widehat \psi$ is an antihomomorphism $U_\C\to \Aut_1(\D)$, so $\widehat \psi=1$ since $U_\C=1$. Thus, again using that $U_\C=1$, the result follows from Proposition \ref{equii}(ii). 
\end{proof} 

\section{Appendix: based-ring cocycles and categorical cocycles} 

In this appendix we study based-ring cocycles and categorical cocycles
 in tensor categories, in particular answering Question 4.3 in \cite{JOY}. Although the results below are not used in the main body of the paper, they are motivated 
by some of its arguments, e.g. Lemma \ref{twistedcenter} and its proof. 

\subsection{Based-ring cocycles} 
Let $R$ be a based ring with basis $I$ and structure constants $N_{xz}^y$, and let 
$A$ be an abelian group written multiplicatively. For $n\ge 1$, we call a function
$f:I^n\to A$ a \emph{based-ring $n$-cocycle} if for every $x_1,\ldots,x_{n+1}\in I$ and every choice of $y_i\in I$ with $N^{y_i}_{x_ix_{i+1}}>0$, $1\le i\le n$, one has \scriptsize
\begin{equation}\label{frcocycle}
 f(x_2,\ldots,x_{n+1})
 \prod_{i=1}^{n}
 f(x_1,\ldots,x_{i-1},y_i,x_{i+2},\ldots,x_{n+1})^{(-1)^{i}}
 f(x_1,\ldots,x_{n})^{(-1)^{n+1}}=1 .
\end{equation} \normalsize

\begin{proposition}\label{anydegreecocycle} Let $f:I^n\to A$ be a based-ring $n$-cocycle. Then there is a unique group $n$-cocycle
$\overline f:U(R)^n\to A$
such that
$$
 f(x_1,\ldots,x_n)=\overline f(\deg x_1,\ldots,\deg x_n)
$$
for all $x_1,\ldots,x_n\in I$.
\end{proposition}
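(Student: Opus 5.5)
The plan is to show that $f$ depends only on the degrees of its arguments, one variable at a time, using the generalization of Lemma \ref{homom}. Once that is established, $\overline f$ is well defined on $U(R)^n$. The cocycle identity for $\overline f$ then follows by evaluating \eqref{frcocycle} on representatives: given $g_1,\dots,g_{n+1}$, pick $x_i\in I_{g_i}$ and $y_i\le x_ix_{i+1}$, so that $\deg y_i=g_ig_{i+1}$. Uniqueness is clear because $\deg$ is surjective.

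To show independence in the $j$-th variable, fix the other variables. The key step is a "multiplicativity" statement: for any $u\in I$ and any $y\le xu$, the ratio $f(\dots,y,\dots)/f(\dots,x,\dots)$ (at slot $j$) depends only on $u$ and the remaining variables, and not on $x$ or $y$. I would obtain this from \eqref{frcocycle} applied to the $(n+1)$-tuple in which $x_j=x$ and $x_{j+1}=u$, with the other entries arbitrary. In that identity, the only terms involving $y_j$ or $x_j$ in slot $j$ are the factors $f(\ldots,y_j,\ldots)$ and $f(\ldots,x_j,x_{j+1},\ldots)$. However, the remaining terms also contain $x_j$ in other positions, namely through $y_{j-1}\le x_{j-1}x_j$ and through the shifted terms. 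So a cleaner route is needed.

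The cleaner route is induction on $n$ via "partial" contraction. I would first treat the last variable ($j=n$), using \eqref{frcocycle} with $x_{n+1}=u$ arbitrary. Each term not involving $y_n$ contains $x_{n+1}$ explicitly or has $y_i$ with $i<n$, and these can be compared across two choices $(x_n,y_n)$ and $(x_n',y_n')$ only if the other $y_i$'s are chosen compatibly. Because of this difficulty I would instead mimic the proof of Lemma \ref{homom} directly. Suppose $x,x'\le u_1\cdots u_m$ at slot $n$. Pick chains $x\le x^{(m-1)}u_m$, $x'\le x'^{(m-1)}u_m$ with $x^{(m-1)},x'^{(m-1)}\le u_1\cdots u_{m-1}$. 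Applying \eqref{frcocycle} to $(x_1,\dots,x_{n-1},x^{(m-1)},u_m)$ and to its primed version, with identical choices of $y_i$ for $i<n-1$, expresses $f(\dots,x)$ and $f(\dots,x')$ through the same set of other terms together with $f(\dots,x^{(m-1)})$ resp.\ $f(\dots,x'^{(m-1)})$. The term $y_{n-1}\le x_{n-1}x^{(m-1)}$ differs between the two tuples, which forces a simultaneous induction. I would therefore set up the induction on the total "length" $m$ jointly over all slots, with $m=1$ (equality) as the base.

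The main obstacle is exactly this coupling: in \eqref{frcocycle} the variable being varied appears both alone and inside the products $y_{j-1},y_j$. My fallback, which I expect to work cleanly, is to prove first that $f$ restricted to any slot becomes "degree-only" after fixing the other slots to be $\bold 1$-free. To do this I would use the normalization trick: insert $x_{i}=\bold 1$ into \eqref{frcocycle}, where $y$ is forced to equal the neighbour, to show that $f$ is normalized up to a coboundary-type correction. Then I would run the Lemma \ref{homom} induction slot by slot, from $j=n$ down to $j=1$. At step $j$ all later slots are already known to depend only on degree, so the discrepancy terms $f(\dots,y_j,\dots)$ involving different chains agree by the inductive hypothesis. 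If that bookkeeping fails, I would instead interpret $f$ as a cocycle on the nerve of the category whose morphisms are pairs $(x,y)$ with $y\le \cdot\, x$, and compare cohomology with that of $U(R)$.
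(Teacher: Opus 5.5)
\textbf{There is a genuine gap: you never close the induction.} Your skeleton is the right one, and it is the paper's. You run a Lemma~\ref{homom}-style induction on the length $m$ of a product $u_1\cdots u_m$ containing $x,x'$. You compare two instances of \eqref{frcocycle}: one on $(a_1,\ldots,a_{p-1},t,u_m,a_{p+1},\ldots,a_n)$ with $y_p=x$, and one on the primed tuple with $y_p=x'$. Here $x\le tu_m$, $x'\le t'u_m$ and $t,t'\le u_1\cdots u_{m-1}$. Your deduction of the group cocycle identity and of uniqueness is also fine.

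In your case $p=n$ you assert that the two instances share all factors except the final one and the one containing $y_{n-1}$. That is not correct. For $p\ge 2$:
\begin{itemize}
\item the initial factor $f(x_2,\ldots,x_{n+1})$ and every middle factor with $i<p-1$ contain $t$ versus $t'$ in slot $p-1$;
\item the $i=p-1$ factor contains $y_{p-1}\le a_{p-1}t$ versus $y'_{p-1}\le a_{p-1}t'$ in slot $p-1$;
\item the middle factors with $i>p$ and the final factor contain $t$ versus $t'$ in slot $p$.
\end{itemize}
The observation you are missing is that $y_{p-1},y'_{p-1}\le a_{p-1}u_1\cdots u_{m-1}$, which is again a product of $m$ factors. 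So slot $p$ at length $m$ needs slot $p-1$ at lengths $m-1$ \emph{and} $m$, plus slot $p$ at length $m-1$. A ``simultaneous induction on $m$ jointly over all slots'' is therefore circular as stated. The paper makes the slot index the outer induction variable, increasing from $p=1$ (where no $i=p-1$ factor exists), with $m$ as the inner variable. Then $P(p,m)$ follows from $P(p-1,m-1)$, $P(p-1,m)$ and $P(p,m-1)$.

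\textbf{Your fallback runs in the wrong direction.} With the right-peeled chains $x\le tu_m$ of Lemma~\ref{homom}, varying slot $p$ only produces discrepancies in slots $p-1$ and $p$, never in later slots. So knowing the later slots does not help; already the first step, slot $n$, needs slot $n-1$. A descending order can be made to work, but only with left-peeled chains $x\le u_1t$, $t\le u_2\cdots u_m$, applied to $(a_1,\ldots,a_{p-1},u_1,t,a_{p+1},\ldots,a_n)$. Then the discrepancies lie in slot $p$ at length $m-1$ and in slot $p+1$ at lengths $m-1$ and $m$, the latter through $y_{p+1}\le ta_{p+1}$, and one inducts downward from $p=n$. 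You did not set this up.

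The normalization step is not needed, since nothing in the argument uses normalized cochains. Moreover, correcting $f$ by a ``based-ring coboundary'' is not meaningful before you know $f$ factors through degrees: $dh$ for a general $h:I^{n-1}\to A$ depends on the choice of the summands $y_i$. The nerve reformulation is too vague to count as an argument.
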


\begin{proof} It suffices to show that $f$ is constant on universal grading classes in each variable.  For
$1\le p\le n$ and $m\ge 1$, let $P(p,m)$ be the following assertion: if
$x,x'\le u_1\cdots u_m$, then, for all $1\le q\le p$ and fixed entries
$a_1,\ldots,a_{q-1},a_{q+1},\ldots,a_n$, one has
\begin{equation*}
        f(a_1,\ldots,a_{q-1},x,a_{q+1},\ldots,a_n)
        =
        f(a_1,\ldots,a_{q-1},x',a_{q+1},\ldots,a_n).
\end{equation*}
We shall prove $P(p,m)$ for all $p,m$ by induction on $p$ (starting with the vacuous case $p=0$), and, for each fixed $p$, by induction on $m$.

The case $m=1$ is trivial: $x=x'=u_1$.  So consider $m>1$.  
We assume that $P(q,r)$ has already been proved for all
$q<p$ and all $r$, and that $P(p,r)$ has already been proved for all $r<m$.  

Let
$x,x'\le u_1\cdots u_m$.  Choose elements
$t,t'\le u_1\cdots u_{m-1}$ such that
$
        x\le tu_m,
        \ 
        x'\le t'u_m .
$
Apply \eqref{frcocycle} to the $(n+1)$-tuple
$$
        (a_1,\ldots,a_{p-1},t,u_m,a_{p+1},\ldots,a_n)
$$
with $y_{p}=x$.  Apply it again to
$$
        (a_1,\ldots,a_{p-1},t',u_m,a_{p+1},\ldots,a_n)
$$
with $y_{p}=x'$.  For $i\ne p$, choose the adjacent summands in the two equations as follows.  If the adjacent pair does not involve $t$ or $t'$, choose the same summand in the two equations.  If $i=p-1$ (when $p>1$), choose arbitrary summands $
        y_{p-1}\le a_{p-1}t,
        \ 
        y'_{p-1}\le a_{p-1}t'.
$

We compare the two cocycle equations. The only factors not yet known to agree are the \(i=p\) factors:
$f(a_1,\ldots,a_{p-1},x,a_{p+1},\ldots,a_n)$
in the first equation and
$
        f(a_1,\ldots,a_{p-1},x',a_{p+1},\ldots,a_n)
$
in the second one.  We claim that every other factor in the first equation equals the corresponding factor in the second equation.

Indeed, consider first the initial factor $f(x_2,\ldots,x_{n+1})$.
If $p=1$, this factor is the same in the two equations.  If $p>1$, then $t$ and $t'$ occur in this factor in position $p-1$; since $t,t'\le u_1\cdots u_{m-1}$, equality follows from $P(p-1,m-1)$.

Next consider a middle factor with $i<p-1$ (when $p>2$).  After replacing $x_i,x_{i+1}$ by the chosen summand $y_i$, the symbols $t$ and $t'$ occur in position $p-1$ of this factor.  Thus the two corresponding factors are equal by $P(p-1,m-1)$.  For $i=p-1$ (when $p>1$), the two corresponding factors contain $y_{p-1}$ and $y'_{p-1}$ in position $p-1$.  But both $y_{p-1}$ and $y'_{p-1}$ occur in the product
$a_{p-1}u_1\cdots u_{m-1}$.
Hence equality follows from $P(p-1,m)$.

For every middle factor with $i>p$, the symbols $t$ and $t'$ occur in position $p$ of the factor, while all other entries are the same.  Since
$t,t'\le u_1\cdots u_{m-1}$, equality follows from $P(p,m-1)$.

It remains only to compare the final factor $f(x_1,\ldots,x_n)$.
In this factor the symbols $t$ and $t'$ occur in position $p$, so equality again follows from $P(p,m-1)$.

Thus all factors in the two instances of \eqref{frcocycle} agree except possibly the $i=p$ factors.  Since both products equal $1$, the $i=p$ factors also agree.  Therefore
\begin{equation*}
        f(a_1,\ldots,a_{p-1},x,a_{p+1},\ldots,a_n)
        =
        f(a_1,\ldots,a_{p-1},x',a_{p+1},\ldots,a_n),
\end{equation*}
which proves $P(p,m)$.  This completes the double induction.

It follows that $\overline f$ is a group cocycle. Indeed, for $g_1,\ldots,g_{n+1}\in U(R)$ choose $x_r\in I_{g_r}$ and $y_i\le x_i x_{i+1}$; then $\deg y_i=g_i g_{i+1}$, so \eqref{frcocycle} becomes exactly the group-cocycle identity. 
\end{proof} 

\subsection{Categorical $2$- and $3$-cocycles} 

Let $R$ be a based ring with basis $I$ and structure constants $N_{ij}^k$. Recall that a {\it categorification} of $R$ over $\k$ is a semisimple tensor category $\C$ with an identification ${\rm Gr}(\C)\cong R$; i.e., the simple objects of $\C$ are $X_i$, $i\in I$, with $X_i\otimes X_j\cong \oplus_k N_{ij}^k X_k$ (\cite{EGNO}, Subsection 4.10).  

\begin{definition} Let $\C$ be a categorification of $R$ with associativity isomorphism $\alpha$. 

(i) A function $f:I^2\to \k^\times$ is called a {\it categorical $2$-cocycle} for $\C$ if the morphisms
$f(i,j)\id_{X_iX_j}$ extended additively define a tensor structure on the identity functor of $\C$.

(ii) A function $f:I^3\to \k^\times$ is called a {\it categorical $3$-cocycle} for $\C$ if the morphisms
$f(i,j,k)\alpha_{X_iX_jX_k}$ extended additively define a new associativity constraint on $\C$. 
\end{definition} 

\begin{example}\label{categorical-cocycle-examples} (i) The pullback to $I$ of a group 2-cocycle on $U_\C$ is a categorical 2-cocycle for $\C$. The pullback to $I$ of a group $3$-cocycle on $U_\C$ is a categorical 3-cocycle for $\C$. 

(ii) Let $\D$ be a tensor category and $F: \C\to \D$ a tensor functor with tensor structure $\mathcal J$. Say that a collection of $h_i\in \Aut F(X_i),i\in I$ defines a {\it 
projective tensor automorphism} of $F$ if there exist scalars $\beta(i,j)\in \k^\times,\ i,j\in I$ such that 
$$
\mathcal J_{X_iX_j}(h_i\otimes h_j)\mathcal J_{X_iX_j}^{-1}|_{F(X_k)}=\beta(i,j)h_k
$$ 
for all $i,j\in I$ and $X_k\hookrightarrow X_i\otimes X_j$. It is easy to see that 
if $h$ is a projective tensor automorphism then $\beta$ is a categorical 2-cocycle for $\C$. 

(iii) Let $\D$ be a tensor category and $F: \C\to \D$ an additive functor. 
Say that a collection of isomorphisms 
$$
\mathcal J_{ij}: F(X_i)\otimes F(X_j)\to F(X_i\otimes X_j)
$$ 
defines a {\it projective tensor structure} on $F$ if for simple inputs $X_i,X_j,X_k$, the tensor structure axiom is satisfied up to a scalar factor $\gamma(i,j,k)\in \k^\times$. It is easy to see that 
if $\mathcal J$ is a projective tensor structure on $F$ then $\gamma$ is a categorical 3-cocycle for $\C$.
\end{example} 

\begin{example}\label{heisenberg-categorical-cocycle}
The construction of Example~\ref{categorical-cocycle-examples}(ii) can yield a nontrivial categorical $2$-cocycle. Indeed, let $p$ be prime and let
\[
\begin{gathered}
K=\{x(a,b,c)\mid a,b,c\in\mathbb F_p\},\\
x(a,b,c)x(a',b',c')=x(a+a',b+b',c+c'+ab')
\end{gathered}
\]
be the finite Heisenberg group. Put $A=K/Z(K)\cong(\mathbb Z/p)^2$ and let
$H=\operatorname{Fun}(K)\#\mathbb C A$, where $A$ acts on $K$ by conjugation. This is a semisimple quasitriangular Hopf algebra, and for the quotient homomorphism $q:K\to A$, the subalgebra $q^*\operatorname{Fun}(A)$ is a maximal central Hopf subalgebra of the form $\operatorname{Fun}(G)$. Indeed, any such $G$ is abelian, while the central grouplikes of $H$ are precisely the character group $\Hom(K,\Bbb C^\times)=\Hom(A,\Bbb C^\times)$, whose span is $q^*\operatorname{Fun}(A)$. Thus $U_{\Rep(H)}=A$.

Fix a primitive $p$-th root $\zeta$ and set
$\psi((a,b),(a',b'))=\zeta^{ab'}$. This cocycle is not symmetric, hence $[\psi]\ne0$ in $H^2(A,\mathbb C^\times)$. Nevertheless, if $u\in\operatorname{Fun}(K)^\times$ is given by $u(x(a,b,c))=\zeta^{-c}$, then
\[
J_\psi=\Delta(u)^{-1}(u\otimes u)
\]
in $H\otimes H$. Let $F:\Rep(H)\to\Vec$ be the fiber functor and let $u_i$ be the action of $u$ on $F(X_i)$. If $g_i=\deg X_i\in A$, then $u\otimes u=\Delta(u)J_\psi$ implies that, for $X_k\hookrightarrow X_i\otimes X_j$,
\[
(u_i\otimes u_j)|_{F(X_k)}=\psi(g_i,g_j)u_k.
\]
Thus $u$ is a projective tensor automorphism of $F$, with the nontrivial categorical $2$-cocycle $\beta(i,j)=\psi(g_i,g_j)$.
\end{example}

\begin{example}
The construction of Example~\ref{categorical-cocycle-examples}(iii) can also yield a nontrivial categorical $3$-cocycle, already for a cocommutative Hopf algebra. Let $K=\mathbb Z/4$, let $A=\mathbb Z/2$ act on $K$ by inversion, and set
\[
H=\operatorname{Fun}(K)\#\mathbb C A\cong \mathbb C D_8,
\]
where $D_8$ is the dihedral group of order $8$. For the quotient map $q:K\to A$, the subalgebra $q^*\operatorname{Fun}(A)$ is the maximal central Hopf subalgebra of $H$: under the displayed identification it corresponds to $\mathbb C Z(D_8)$, and $Z(D_8)\cong\mathbb Z/2$. Thus $U_{\Rep(H)}=A$.

Write every $x\in K$ uniquely as $x=q(x)+2t(x)$, with $q(x),t(x)\in\{0,1\}$. Define an invertible element $J\in\operatorname{Fun}(K)^{\otimes 2}\subset H^{\otimes 2}$ by
\[
J(x,y)=(-1)^{q(x)t(y)}.
\]
Since
\[
t(y+z)=t(y)+t(z)+q(y)q(z)\pmod 2,
\]
one has
\[
\frac{J(y,z)J(x,y+z)}{J(x+y,z)J(x,y)}
=(-1)^{q(x)q(y)q(z)}.
\]
Equivalently,
\[
(\id\otimes\Delta)(J)(1\otimes J)
=(q^*\omega)(\Delta\otimes\id)(J)(J\otimes1),
\qquad
\omega(a,b,c)=(-1)^{abc}.
\]
Now let $F:\Rep(H)\to\Vec$ be the fiber functor, and let $\mathcal J_{ij}$ be the action of $J$ on $F(X_i)\otimes F(X_j)$. If $g_i=\deg X_i\in A$, the two sides of the tensor structure axiom differ by
\[
\gamma(i,j,k)=\omega(g_i,g_j,g_k)=(-1)^{g_i g_j g_k}.
\]
Thus $\mathcal J$ is a projective tensor structure on $F$ whose defect is a nontrivial categorical $3$-cocycle. Indeed, $\omega(1,1,1)=-1$, whereas every normalized $3$-coboundary on $\mathbb Z/2$ takes the value $1$ at $(1,1,1)$. Thus $[\omega]$ is the nonzero element of $H^3(\mathbb Z/2,\mathbb C^\times)\cong\mathbb Z/2$, although its inflation to $H^3(\mathbb Z/4,\mathbb C^\times)$ is zero.
\end{example}

\subsection{Categorical $4$-cocycles} 
\begin{definition} A {\it projective categorification} of $R$ is a $\k$-linear semisimple  category $\C$ with simple objects $X_i,i\in I$, a tensor product functor with Grothendieck ring $R$, and an associativity isomorphism $\alpha$ satisfying pentagon up to a scalar factor $\delta(i,j,k,l)\in \k^\times$ on each tensor product $((X_i\otimes X_j)\otimes X_k)\otimes X_l$, such that $\C$ is rigid.  A function $\delta: I^4\to \k^\times$ arising in this way is called a {\it categorical $4$-cocycle} for $R$. 
\end{definition} 

Here by rigidity we mean that every simple object $X=X_i$ is equipped with coevaluation and evaluation maps 
$$
\coev_X: \bold 1\to X\otimes X^*,\
\ev_X: X^*\otimes X\to \bold 1,
$$
where $X^*:=X_{i^*}$, such that the composition
$$
X\to (X\otimes X^*)\otimes X\to X\otimes (X^*\otimes X)\to X
$$
is the identity. Then it is easy to check that the composition 
$$
X^*\to X^*\otimes (X\otimes X^*)\to (X^*\otimes X)\otimes X^*\to X^*
$$
equals $\delta(i,i^*,i,i^*)^{-1}\id_{X^*}$. In this case, like in rigid tensor categories, we
have natural isomorphisms $\Hom(X^*\otimes Y,Z)\cong \Hom(Y,X\otimes Z)$. 

If $\delta$ depends only on the degrees of its arguments, i.e., is the pullback of $\overline{\delta}: U(R)^4\to \k^\times$ (which we will show to be always true), then $\overline\delta$ is a group 4-cocycle. In this case the category $\C$ is called a {\it $U(R)$-quasi-monoidal category} and $\overline\delta$ is called a {\it pentagonicity defect cocycle}, \cite{JOY}, Section 4. 

\begin{example} Let $\D$ be a fusion category, $G$ be a group, and let  $c: G\to {\rm BrPic}(\D)$ be a 
homomorphism (\cite{ENO}). Recall that to lift this homomorphism to 
a $G$-extension of $\D$, we first need to consider the first obstruction 
$O_3(c)\in H^3(G,{\rm Inv}\mathcal Z(\D))$ where  $\mathcal Z(\D)$ is the Drinfeld center of $\mathcal D$ 
and Inv denotes the group of invertible objects. If this obstruction vanishes, we get to choose an element 
$M$ in a certain torsor over $H^2(G,{\rm Inv}\mathcal Z(\D))$, which pins down the Grothendieck ring $R$ of 
the putative $G$-extension $\C$ of $\D$, and we obtain the second obstruction $O_4(c,M)\in H^4(G,\k^\times)$ to existence of an associativity isomorphism of $\C$ satisfying pentagon. In fact, as shown in \cite{ENO}, such an associativity isomorphism almost exists: there is always one if we slightly relax the pentagon relation to hold only up to a scalar on each tensor product $((X_i\otimes X_j)\otimes X_k)\otimes X_l$, 
and this scalar $\delta(i,j,k,l)$ is pulled back from a 4-cocycle representing $O_4(c,M)$; i.e., we don't necessarily have an honest categorification of $R$, but we always have a projective one. 
Thus such a cocycle $\delta$ is an example of a categorical 4-cocycle.  
\end{example} 

\subsection{A block nonvanishing lemma} 

Let $\C$ be a projective categorification of $R$ with simple objects 
$X_i,i\in I$ and associativity isomorphism $\alpha$. Let 
\begin{equation*}
\begin{aligned}
P^+_{i,j,k,l}
&:=({\rm id}_{X_i}\otimes \alpha_{X_j,X_k,X_l})
   \alpha_{X_i,X_j\otimes X_k,X_l}
   (\alpha_{X_i,X_j,X_k}\otimes{\rm id}_{X_l}),\\
P^-_{i,j,k,l}
&:=\alpha_{X_i,X_j,X_k\otimes X_l}\alpha_{X_i\otimes X_j,X_k,X_l}.
\end{aligned}
\end{equation*}
Thus \(P^+_{i,j,k,l}\) and \(P^-_{i,j,k,l}\) are morphisms from
\(((X_i\otimes X_j)\otimes X_k)\otimes X_l\) to
\(X_i\otimes(X_j\otimes(X_k\otimes X_l))\) and
\begin{equation}
\label{eq:projective-pentagon-convention}
        P^+_{i,j,,l}=\delta(i,j,k,l)P^-_{i,j,k,l},\
         \delta(i,j,k,l)\in \k^\times .
\end{equation}

For \(M\in \C\), write $M_i:=\Hom(X_i,M)$,
 then $M\simeq \bigoplus_{l\in I} M_l\otimes X_l$. 
Define an automorphism $\Omega^{X_i,X_j,X_k}_M: M\longrightarrow M$
by letting it act on the summand \(M_l\otimes X_l\) as multiplication by
\(\delta(i,j,k,l)\).  

\begin{lemma}
\label{lem:projective-sliding}
Let \(B\in \C\) be simple, \(B^*\) be its left dual, and $Z\in \C$. Define the composite morphism
\begin{equation*}
\begin{aligned}
        e_{B,Z}:&
        B^*\otimes(B\otimes Z)
        \xrightarrow{\alpha^{-1}_{B^*,B,Z}}
        (B^*\otimes B)\otimes Z
        \xrightarrow{\operatorname{ev}_B\otimes{\rm id}_Z}
        {\bf 1}\otimes Z
        \xrightarrow{\ell_Z}
        Z .
\end{aligned}
\end{equation*}
Then, for any $M\in \C$ and simple $C\in \C$ we have 
\begin{equation}
\label{eq:sliding-left}e_{B,C\otimes M}
  ({\rm id}_{B^*}\otimes\alpha_{B,C,M})=
({\rm id}_C\otimes\Omega^{B^*,B,C}_M)
\bigl(e_{B,C}\otimes{\rm id}_M\bigr)
\alpha^{-1}_{B^*,B\otimes C,M}
\end{equation}
as morphisms
$B^*\otimes((B\otimes C)\otimes M)\longrightarrow C\otimes M.$
In particular, 
\begin{equation}
\label{eq:sliding-left1}
e_{X_i,X_j\otimes X_d}
  ({\rm id}_{X_i^*}\otimes\alpha_{X_i,X_j,X_d})=
\delta(i^*,i,j,d)
\bigl(e_{X_i,X_j}\otimes{\rm id}_{X_d}\bigr)
\alpha^{-1}_{X_i^*,X_i\otimes X_j,X_d}.
\end{equation}
\end{lemma}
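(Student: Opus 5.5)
The plan is to reduce to the case where $M$ is simple, and then obtain the identity directly from one instance of the projective pentagon \eqref{eq:projective-pentagon-convention}, namely the one with entries $(B^*,B,C,M)$. The only other inputs are the naturality of $\alpha$ and the unit compatibility of $\ell$.

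\textbf{Step 1: reduction to simple $M$.} Both sides of \eqref{eq:sliding-left} are natural in $M$. Writing $M\simeq\bigoplus_l M_l\otimes X_l$, the operator ${\rm id}_C\otimes\Omega^{B^*,B,C}_M$ acts on the summand $C\otimes(M_l\otimes X_l)$ by the scalar $\delta(b^*,b,c,l)$. Here $B=X_b$, $C=X_c$, and we use $B^*=X_{b^*}$. So it suffices to prove, for $M=X_l$,
\[
e_{B,C\otimes X_l}({\rm id}_{B^*}\otimes\alpha_{B,C,X_l})=\delta(b^*,b,c,l)\,\bigl(e_{B,C}\otimes{\rm id}_{X_l}\bigr)\alpha^{-1}_{B^*,B\otimes C,X_l}.
\]
This case is exactly \eqref{eq:sliding-left1}. (The pentagon scalar is defined on simple quadruples, and since $\alpha$ is extended additively, everything is compatible with direct sums.)

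\textbf{Step 2: rewrite the left-hand side using the pentagon.} The projective pentagon for $(B^*,B,C,X_l)$ reads
\[
({\rm id}\otimes\alpha_{B,C,X_l})\,\alpha_{B^*,B\otimes C,X_l}\,(\alpha_{B^*,B,C}\otimes{\rm id})=\delta(b^*,b,c,l)\,\alpha_{B^*,B,C\otimes X_l}\,\alpha_{B^*\otimes B,C,X_l}.
\]
Solving it gives
\[
\alpha^{-1}_{B^*,B,C\otimes X_l}({\rm id}\otimes\alpha_{B,C,X_l})=\delta(b^*,b,c,l)\,\alpha_{B^*\otimes B,C,X_l}(\alpha^{-1}_{B^*,B,C}\otimes{\rm id})\alpha^{-1}_{B^*,B\otimes C,X_l}.
\]
Substitute this into $e_{B,C\otimes X_l}=\ell_{C\otimes X_l}(\operatorname{ev}_B\otimes{\rm id})\alpha^{-1}_{B^*,B,C\otimes X_l}$.

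\textbf{Step 3: move $\operatorname{ev}_B$ and $\ell$ past the associator.} Naturality of $\alpha$ in its first argument gives
\[
(\operatorname{ev}_B\otimes{\rm id}_{C\otimes X_l})\,\alpha_{B^*\otimes B,C,X_l}=\alpha_{{\bf 1},C,X_l}\bigl((\operatorname{ev}_B\otimes{\rm id}_C)\otimes{\rm id}_{X_l}\bigr).
\]
We then need the unit compatibility $\ell_{C\otimes X_l}\,\alpha_{{\bf 1},C,X_l}=\ell_C\otimes{\rm id}_{X_l}$. With it, the expression collapses to $\delta(b^*,b,c,l)\,(e_{B,C}\otimes{\rm id}_{X_l})\,\alpha^{-1}_{B^*,B\otimes C,X_l}$, which is the claim.

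\textbf{Main obstacle.} The delicate point is the unit compatibility used in Step 3, because in a projective categorification the pentagon, and hence the derived triangle-type identities, hold only up to scalars. I would handle this by first normalizing: the unit isomorphisms can be chosen so that $\alpha$ is trivial whenever one argument is ${\bf 1}$. This makes $\delta$ normalized, i.e.\ $\delta=1$ whenever an entry is the unit index, and the identity $\ell_{C\otimes X}\alpha_{{\bf 1},C,X}=\ell_C\otimes{\rm id}$ then holds on the nose. If this normalization is not already built into the definition, I would check it by rescaling $\alpha$ on the unit blocks, which changes $\delta$ only by a coboundary-type factor that is trivial on the quadruples used here.
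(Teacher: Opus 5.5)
Your argument is correct and is essentially the paper's proof: reduce to \eqref{eq:sliding-left1}, solve the projective pentagon for $(X_i^*,X_i,X_j,X_d)$, cancel against the inverse associator in $e$, and remove $\alpha_{X_i^*\otimes X_i,X_j,X_d}$ by naturality in $\operatorname{ev}_{X_i}$ and the unit constraint. The one caveat is your fallback in the last paragraph: the compatibility $\ell_{C\otimes X}\alpha_{{\bf 1},C,X}=\ell_C\otimes{\rm id}_X$ has to be part of the setup, as the paper tacitly assumes, because ($\operatorname{ev}_B$ being an epimorphism) \eqref{eq:sliding-left1} is actually equivalent to it, so rescaling $\alpha$ on unit blocks cannot establish it --- that changes the very morphisms in the statement and, contrary to your claim, also changes the pentagon for $(b^*,b,c,l)$ on the unit summand of $B^*\otimes B$.
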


\begin{proof}
It suffices to prove \eqref{eq:sliding-left1}. The projective pentagon
\eqref{eq:projective-pentagon-convention} for the four simple objects
\(X_i^*,X_i,X_j,X_d\) gives
\begin{equation*}
\begin{aligned}
&({\rm id}_{X_i^*}\otimes \alpha_{X_i,X_j,X_d})
 \alpha_{X_i^*,X_i\otimes X_j,X_d}
 (\alpha_{X_i^*,X_i,X_j}\otimes{\rm id}_{X_d})\\
&\quad =
\delta(i^*,i,j,d)
\alpha_{X_i^*,X_i,X_j\otimes X_d}
\alpha_{X_i^*\otimes X_i,X_j,X_d}.
\end{aligned}
\end{equation*}
Solve this equation for
\({\rm id}_{X_i^*}\otimes\alpha_{X_i,X_j,X_d}\), and then compose on the left
with the evaluation map \(e_{X_i,X_j\otimes X_d}\).  The factor
\(\alpha_{X_i^*,X_i,X_j\otimes X_d}\) cancels with the inverse associator in the
definition of \(e_{X_i,X_j\otimes X_d}\).  The remaining associator
\(\alpha_{X_i^*\otimes X_i,X_j,X_d}\) is removed by naturality with respect to the morphism
\(\operatorname{ev}_{X_i}:X_i^*\otimes X_i\to{\bf 1}\)
 and by the unit constraint.
This gives \eqref{eq:sliding-left1}.
\end{proof}

Now put $H^r_{ij}:=\Hom(X_r,X_i\otimes X_j)$,
        $J_{ij}:=\{r\in I\mid H^r_{ij}\ne 0\}$.
Let
$
        \iota^r_{ij}:H^r_{ij}\otimes X_r\to X_i\otimes X_j,
        \pi^r_{ij}:X_i\otimes X_j\to H^r_{ij}\otimes X_r
$
be the canonical splitting maps. For \(r\in J_{ij}\), define
\begin{equation}
\label{eq:uijr}
        u^r_{ij}:=
        e_{X_i,X_j}({\rm id}_{X_i^*}\otimes\iota^r_{ij}):
        X_i^*\otimes(H^r_{ij}\otimes X_r)\to X_j .
\end{equation}
This is the Frobenius-reciprocity mate of \(\iota^r_{ij}\); hence it is
nonzero, and therefore an epimorphism.

Define the rebracketing isomorphisms
\begin{equation}
\label{eq:rho-fixed-final}
\begin{aligned}
\rho_{i,j,k,l}:&
        (X_i\otimes X_j)\otimes(X_k\otimes X_l)\\
&\xrightarrow{\alpha_{X_i,X_j,X_k\otimes X_l}}
        X_i\otimes(X_j\otimes(X_k\otimes X_l))\\
&\xrightarrow{{\rm id}_{X_i}\otimes\alpha^{-1}_{X_j,X_k,X_l}}
        X_i\otimes((X_j\otimes X_k)\otimes X_l).
\end{aligned}
\end{equation}
and 
\begin{equation}
\label{eq:sigma-fixed-final}
        \sigma_{i,j,k,l,m}:=
        ({\rm id}_{X_i}\otimes\rho^{-1}_{j,k,l,m})
        \alpha_{X_i,X_j,(X_k\otimes X_l)\otimes X_m}.
\end{equation}
Thus  
\begin{equation*}
  \sigma_{i,j,k,l,m}:      (X_i\otimes X_j)\otimes((X_k\otimes X_l)\otimes X_m)
        \longrightarrow
        X_i\otimes((X_j\otimes X_k)\otimes(X_l\otimes X_m)).
\end{equation*}

\begin{lemma}
\label{lem:block-nonvanishing-final}
(i) If \(a\in J_{ij}\) and \(b\in J_{jk}\), then the block
\begin{equation*}
        Q^{a,b}_{i,j,k}:
        (H^a_{ij}\otimes X_a)\otimes X_k
        \longrightarrow
        X_i\otimes(H^b_{jk}\otimes X_b)
\end{equation*}
of \(\alpha_{X_i,X_j,X_k}\) is nonzero.  The same holds for the inverse
associator.

(ii) If \(a\in J_{ij}\), \(b\in J_{jk}\), \(c\in J_{kl}\), then the block
\begin{equation*}
        R_{i,j,k,l}^{a,b,c}:
        (H^a_{ij}\otimes X_a)\otimes(H^c_{kl}\otimes X_c)
        \longrightarrow
        X_i\otimes((H^b_{jk}\otimes X_b)\otimes X_l)
\end{equation*}
of \(\rho_{i,j,k,l}\) is nonzero.  The same holds for the inverse map
\(\rho^{-1}_{i,j,k,l}\).

(iii) If $a\in J_{ij},b\in J_{jk},
        c\in J_{kl},d\in J_{lm}$
then the block
\begin{equation*}
        S_{i,j,k,l,m}^{a,b,c,d}:
        (H^a_{ij}\otimes X_a)\otimes((H^c_{kl}\otimes X_c)\otimes X_m)
        \longrightarrow
        X_i\otimes((H^b_{jk}\otimes X_b)\otimes(H^d_{lm}\otimes X_d))
\end{equation*}
of \(\sigma_{i,j,k,l,m}\) is nonzero.
\end{lemma}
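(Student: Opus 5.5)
The plan is to reduce each block-nonvanishing statement to the nonvanishing of a composite of Frobenius-reciprocity mates, using the sliding identity of Lemma \ref{lem:projective-sliding}. Since $\delta$ only contributes nonzero scalars, the projective pentagon never creates zeros.

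\textbf{Part (i).} Compose the block $Q^{a,b}_{i,j,k}$ with $e_{X_i,-}$ after applying ${\rm id}_{X_i^*}$. More precisely, consider
\[
e_{X_i,X_j\otimes X_k}\,({\rm id}_{X_i^*}\otimes\alpha_{X_i,X_j,X_k})\,({\rm id}_{X_i^*}\otimes(\iota^a_{ij}\otimes{\rm id}_{X_k})).
\]
By \eqref{eq:sliding-left1} (with $d$ replaced by $k$) this equals
\[
\delta(i^*,i,j,k)\,(e_{X_i,X_j}\otimes{\rm id}_{X_k})\,\alpha^{-1}\,({\rm id}\otimes(\iota^a_{ij}\otimes{\rm id})).
\]
By naturality of $\alpha^{-1}$, this in turn is $\delta\cdot(u^a_{ij}\otimes{\rm id}_{X_k})\alpha^{-1}_{X_i^*,H^a_{ij}\otimes X_a,X_k}$. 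Since $u^a_{ij}$ is an epimorphism onto $X_j$, the tensor product $u^a_{ij}\otimes{\rm id}_{X_k}$ is an epimorphism onto $X_j\otimes X_k$. Now project with $\pi^b_{jk}$. The composite $X_i^*\otimes((H^a\otimes X_a)\otimes X_k)\to H^b_{jk}\otimes X_b$ is nonzero. On the other side, this composite factors through ${\rm id}_{X_i^*}\otimes Q^{a,b}_{i,j,k}$ followed by a map involving only ${\rm id}_{X_i}\otimes(\pi^b\iota^{b'})$, using naturality of $e_{X_i,-}$ in its second argument and the decomposition ${\rm id}=\sum_{b'}\iota^{b'}\pi^{b'}$. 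Naturality of $e$ kills the off-diagonal terms, so only $b'=b$ survives. Hence $Q^{a,b}\neq0$. The inverse associator is handled symmetrically, either by the mirror-image sliding lemma (right duals, or working with $\ev$ on the other side) or, more simply, by running the same argument starting from the target and using that $\alpha^{-1}$ is the inverse on each isotypic decomposition.

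\textbf{Part (ii).} Write $\rho=({\rm id}\otimes\alpha^{-1})\alpha$ and insert the splittings ${\rm id}=\sum\iota\pi$ between the two factors. The block $R^{a,b,c}$ then becomes a sum over intermediate simples $e\in J_{jc}$ (through $X_j\otimes X_c$) of products of associator blocks. To avoid cancellation, I will not argue termwise. Instead I will apply the same mate trick directly to $\rho$: precompose with $\iota^a\otimes\iota^c$ and apply $e_{X_i,-}$ after ${\rm id}_{X_i^*}\otimes-$. Lemma \ref{lem:projective-sliding}, applied twice with $\delta$ scalars, rewrites $e\circ({\rm id}\otimes\rho)$ as a scalar times $(e_{X_i,X_j}\otimes{\rm id})$ composed with associators involving $X_j,X_k,X_l$. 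This reduces the claim to nonvanishing of the block of $\alpha^{-1}_{X_j,X_k,X_l}$ from $X_j\otimes(H^c\otimes X_c)$ to $(H^b\otimes X_b)\otimes X_l$, precomposed with the epimorphism $u^a_{ij}\otimes{\rm id}$. That block is nonzero by part (i). The inverse $\rho^{-1}$ is treated the same way.

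\textbf{Part (iii).} This follows the same pattern. $\sigma$ is $\alpha$ followed by ${\rm id}\otimes\rho^{-1}$. After precomposing with splittings and applying $e_{X_i,-}$, the outer associator is absorbed via \eqref{eq:sliding-left} (with general $M$, which is why the $\Omega$ version is stated), at the cost of an invertible diagonal operator $\Omega$. What remains is $u^a_{ij}\otimes{\rm id}$ (epi) followed by $\rho^{-1}_{j,k,l,m}$, whose relevant block is nonzero by (ii).

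\textbf{Main obstacle.} I expect the main difficulty to be bookkeeping in (ii) and (iii): verifying that the naturality and sliding manipulations genuinely produce an epimorphism composed with a single known nonzero block, rather than a sum of blocks that might cancel. The key is always to slide $e_{X_i,-}$ past the outermost associator first.
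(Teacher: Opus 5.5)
Your proposal follows the paper's proof essentially step for step: you take the left partial trace via $e_{X_i,-}$, use the sliding identity to absorb the outer associator (with the diagonal automorphism $\Omega$ in (iii)), exploit that $u^a_{ij}\otimes\mathrm{id}$ is an epimorphism, and reduce (ii) and (iii) to the inverse-associator block from (i) and the $\rho^{-1}$ block from (ii). One caveat concerns your ``more simply'' alternative for the inverse associator in (i): invertibility of $\alpha$ alone does not give nonvanishing of the individual blocks of $\alpha^{-1}$, so you should rely on the mirror-image sliding argument (or duality), which is what the paper does.
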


\begin{proof}
For (i), take the left partial trace of \(Q^{a,b}_{i,j,k}\), i.e. compose
\[
        X_i^*\otimes((H^a_{ij}\otimes X_a)\otimes X_k)
        \xrightarrow{{\rm id}\otimes Q^{a,b}_{i,j,k}}
        X_i^*\otimes(X_i\otimes(H^b_{jk}\otimes X_b))
        \xrightarrow{e_{X_i,H^b_{jk}\otimes X_b}}
        H^b_{jk}\otimes X_b .
\]
By Lemma~\ref{lem:projective-sliding} with
 \(M=X_k\), this partial trace is
\(\delta(i^*,i,j,k)\) times the composite \scriptsize
\begin{equation*} 
        X_i^*\otimes((H^a_{ij}\otimes X_a)\otimes X_k)
        \xrightarrow{\alpha^{-1}}
        (X_i^*\otimes(H^a_{ij}\otimes X_a))\otimes X_k
        \xrightarrow{u^a_{ij}\otimes{\rm id}_{X_k}}
        X_j\otimes X_k
        \xrightarrow{\pi^b_{jk}}
        H^b_{jk}\otimes X_b ,
\end{equation*} \normalsize
The middle arrow is
an epimorphism and \(\pi^b_{jk}\ne 0\), so this composite is nonzero. It follows that 
\(Q^{a,b}_{i,j,k}\ne 0\).  The inverse associator is handled by the
right-handed analogue of Lemma~\ref{lem:projective-sliding}, or equivalently
by applying duality to the statement just proved.

For (ii), compose
\begin{equation*}
        X_i^*\otimes\bigl((H^a_{ij}\otimes X_a)
        \otimes(H^c_{kl}\otimes X_c)\bigr)
        \xrightarrow{{\rm id}\otimes R_{i,j,k,l}^{a,b,c}}
        X_i^*\otimes\bigl(X_i\otimes((H^b_{jk}\otimes X_b)\otimes X_l)\bigr)
\end{equation*}
with \(e_{X_i,(H^b_{jk}\otimes X_b)\otimes X_l}\).
By Lemma~\ref{lem:projective-sliding} with
\(M=H^c_{kl}\otimes X_c\), this partial trace is
\(\delta(i^*,i,j,c)\) times the composite \scriptsize
\begin{equation}
\label{eq:rho-trace-comparison}
        X_i^*\otimes\bigl((H^a_{ij}\otimes X_a)
        \otimes(H^c_{kl}\otimes X_c)\bigr)
        \longrightarrow
        X_j\otimes(H^c_{kl}\otimes X_c)
        \xrightarrow{\overline Q^{c,b}_{j,k,l}}
        (H^b_{jk}\otimes X_b)\otimes X_l .
\end{equation} \normalsize
Here the first arrow is obtained from \(u^a_{ij}\) by precomposing with an 
associator, and \(\overline Q^{c,b}_{j,k,l}\) is the block
of \(\alpha^{-1}_{X_j,X_k,X_l}\) from
\(X_j\otimes(H^c_{kl}\otimes X_c)\) to
\((H^b_{jk}\otimes X_b)\otimes X_l\).  This block is nonzero by the
inverse-associator part of (i).  Since the first arrow in
\eqref{eq:rho-trace-comparison} is an epimorphism, the composite is nonzero.
Thus the partial trace of \(R_{i,j,k,l}^{a,b,c}\) is nonzero, and hence
\(R_{i,j,k,l}^{a,b,c}\ne 0\).  The statement for \(\rho^{-1}\) follows by the same
argument with the right-handed sliding lemma, or by duality.

For (iii), set
\begin{equation*}
        M:=(H^c_{kl}\otimes X_c)\otimes X_m.
\end{equation*}
The partial trace of \(S_{i,j,k,l,m}^{a,b,c,d}\) is a morphism
\begin{equation*}
        X_i^*\otimes\bigl((H^a_{ij}\otimes X_a)\otimes M\bigr)
        \longrightarrow
        (H^b_{jk}\otimes X_b)\otimes(H^d_{lm}\otimes X_d).
\end{equation*}
By Lemma~\ref{lem:projective-sliding} the partial trace is
\begin{equation}
\label{eq:sigma-trace-comparison}
        \overline R^{c}_{b,d}
        ({\rm id}_{X_j}\otimes \Omega^{X_i^*,X_i,X_j}_{M})
        (u^a_{ij}\otimes{\rm id}_{M}),
\end{equation}
with associators omitted for brevity.  Here
\(\overline R^{c}_{b,d}\) is the block of
\(\rho^{-1}_{j,k,l,m}\) from $X_j\otimes((H^c_{kl}\otimes X_c)\otimes X_m)$
to $(H^b_{jk}\otimes X_b)\otimes(H^d_{lm}\otimes X_d)$,
which is nonzero by the inverse part of (ii).  The map
\(u^a_{ij}\otimes{\rm id}_{M}\) is an epimorphism, and
\(\Omega^{X_i^*,X_i,X_j}_{M}\) is an automorphism.
  Therefore the composite
\eqref{eq:sigma-trace-comparison} is nonzero, and hence
\(S_{i,j,k,l,m}^{a,b,c,d}\ne 0\).
\end{proof}

\subsection{Classification of categorical cocycles} 
\begin{theorem}\label{catcoc} Every categorical $n$-cocycle is pulled back from $U(R)$

(i) for $n=2$; 

(ii) for $n=3$; 

(iii) for $n=4$.  
\end{theorem}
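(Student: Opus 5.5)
The strategy in all three cases is the same: show that the categorical $n$-cocycle $f$ is a based-ring $n$-cocycle in the sense of \eqref{frcocycle}, and then apply Proposition~\ref{anydegreecocycle}. The work is to extract the scalar identity \eqref{frcocycle} from a coherence diagram. This is possible only on a block where the relevant composite is nonzero, so for every admissible choice of summands $y_i\le x_ix_{i+1}$ we must exhibit such a block.

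\emph{Case $n=2$.} The morphisms $f(i,j)\id$ form a tensor structure on $\Id_\C$. The tensor-structure axiom on $X_i\otimes X_j\otimes X_k$ compares the two sides via $\alpha$. Restrict to the block running from $(H^a_{ij}\otimes X_a)\otimes X_k$ to $X_i\otimes(H^b_{jk}\otimes X_b)$. On it, one side acts by $f(i,j)f(a,k)$ and the other by $f(j,k)f(i,b)$, and both sides are multiplied by the same associator block $Q^{a,b}_{i,j,k}$. By Lemma~\ref{lem:block-nonvanishing-final}(i), applied with $\delta=1$, this block is nonzero. Cancelling it gives $f(j,k)f(a,k)^{-1}f(i,b)f(i,j)^{-1}=1$ for all $a\in J_{ij}$ and $b\in J_{jk}$, which is exactly \eqref{frcocycle} for $n=2$.

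\emph{Case $n=3$.} Both $\alpha$ and $f\alpha$ satisfy the pentagon. Divide the two pentagons for $X_i,X_j,X_k,X_l$, restricted to the block determined by summands $a\le ij$, $b\le jk$, $c\le kl$. Each associator occurring in $P^\pm$ then acts through one of the scalars $f(j,k,l)$, $f(i,b,l)$, $f(i,j,k)$, $f(a,k,l)$, $f(i,j,c)$. This requires decomposing $P^+$ and $P^-$ through intermediate summands and checking that a single block of the composite picks up exactly these scalars. The block of $\rho_{i,j,k,l}$ from Lemma~\ref{lem:block-nonvanishing-final}(ii) is the natural witness that the relevant composite block is nonzero. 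Rewriting the pentagon as $\rho$ composed with associators, and comparing the $(a,c)\to b$ block, yields \eqref{frcocycle} for $n=3$.

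\emph{Case $n=4$.} Here $\delta$ is defined by the projective pentagon. The identity $d\delta=1$ should come from the associahedron $K_5$, the 3-dimensional polytope of five objects $X_i,\dots,X_m$. It has six pentagonal faces, whose defects are the $\delta$ factors, and three square faces that commute by naturality. Evaluating on the block with summands $a\le ij$, $b\le jk$, $c\le kl$, $d\le lm$, the five-term relation appears as the ratio of two expansions of $\sigma_{i,j,k,l,m}$. One expansion uses the pentagons involving $(a,k,l,m)$, $(i,b,l,m)$, etc.; the other uses those involving $(j,k,l,m)$ and $(i,j,k,l)$. When the defect scalars act on a summand $X_l$ of an inner product, this is encoded by the operators $\Omega$. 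Lemma~\ref{lem:block-nonvanishing-final}(iii) guarantees that the block $S^{a,b,c,d}_{i,j,k,l,m}$ is nonzero, so the scalars can be cancelled.

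The main obstacle is this last bookkeeping step. One must verify that on the chosen block every pentagon defect contributes exactly the scalar with the substituted argument $y_i$, rather than a scalar attached to some other summand or an $\Omega$-twist that fails to be constant on the block. The intended approach is to track, face by face, which intermediate summands are forced by the block decomposition of $\sigma$ and $\rho$, and to absorb any leftover $\Omega$ factors using the structure of the partial-trace formula \eqref{eq:sigma-trace-comparison}. Once \eqref{frcocycle} holds, Proposition~\ref{anydegreecocycle} finishes all three cases.
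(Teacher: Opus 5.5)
Your proposal follows the paper's proof essentially step for step. In each case you extract the based-ring cocycle identity \eqref{frcocycle} from the relevant coherence diagram on the blocks $Q^{a,b}_{i,j,k}$, $R^{a,b,c}_{i,j,k,l}$ and $S^{a,b,c,d}_{i,j,k,l,m}$, whose nonvanishing is Lemma~\ref{lem:block-nonvanishing-final}, and then invoke Proposition~\ref{anydegreecocycle}.

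The $n=4$ bookkeeping you flag is only asserted in the paper. It does go through, and no $\Omega$-corrections are needed; the $\Omega$'s enter only the proof that $S^{a,b,c,d}_{i,j,k,l,m}\neq 0$.

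\begin{itemize}
\item The pentagonal faces with defects $\delta(a,k,l,m)$ and $\delta(i,j,c,m)$ have all their vertices containing $X_i\otimes X_j$, resp.\ $X_k\otimes X_l$, as a sub-bracket. Both faces contain the source $(X_i\otimes X_j)\otimes((X_k\otimes X_l)\otimes X_m)$ of $\sigma_{i,j,k,l,m}$.
\item The faces with defects $\delta(i,b,l,m)$ and $\delta(i,j,k,d)$ have all their vertices containing $X_j\otimes X_k$, resp.\ $X_l\otimes X_m$. Both contain the target $X_i\otimes((X_j\otimes X_k)\otimes(X_l\otimes X_m))$ of $\sigma$.
\item The two remaining pentagonal faces have scalar defects, and the three square faces commute by naturality.
\end{itemize}

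So sweeping $\partial K_5$ from $\sigma$ back to $\sigma$ lets you move each non-scalar defect, by naturality along edges of its own face, to one end of $\sigma$. There it acts by the expected scalar on the block $S^{a,b,c,d}_{i,j,k,l,m}$, which yields exactly the paper's relation \eqref{eq:Pi-fusion-4-cocycle}.
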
 

In particular, Theorem \ref{catcoc}(iii) answers affirmatively Question 4.3 in \cite{JOY}. 

\begin{proof} By Proposition \ref{anydegreecocycle}, in all three cases it suffices to show that 
the corresponding categorical cocycle is a based-ring cocycle. So let us show this. 

(i) Let $f:I^2\to \k^\times$ be a categorical $2$-cocycle. The tensor structure coherence equation for the identity functor gives
\begin{equation*}
        f(i,j)f(a,k)=f(i,b)f(j,k)
\end{equation*}
for $a\in J_{ij}$ and $b\in J_{jk}$ as long as $Q_{i,j,k}^{a,b}\ne 0$. Thus by Lemma~\ref{lem:block-nonvanishing-final}(i), this equality holds for all such $a$ and $b$, as desired.

(ii) Let $f:I^3\to \k^\times$ be a categorical $3$-cocycle. Comparing the pentagon for the associator multiplied by $f$ with the original pentagon gives
\begin{equation*}
        f(a,k,l)f(i,j,c)
        =
        f(i,j,k)f(i,b,l)f(j,k,l)
\end{equation*}
for $a\in J_{ij},\  b\in J_{jk},\  c\in J_{kl}$ as long as $R_{i,j,k,l}^{a,b,c}\ne 0$. Thus by Lemma~\ref{lem:block-nonvanishing-final}(ii), this equality holds for all such $a,b,c$, as desired.

(iii) Consider five
simple objects $X_i,X_j,X_k,X_l,X_m$ and choose
\begin{equation*}
        a\in J_{ij},\  b\in J_{jk},\ 
        c\in J_{kl},\  d\in J_{lm}.
\end{equation*}
Let $S_{i,j,k,l,m}^{a,b,c,d}$ be the nonzero block of $\sigma_{i,j,k,l,m}$ from Lemma
\ref{lem:block-nonvanishing-final}(iii).

The boundary of the associahedron
$K_5$ gives a relation between the six pentagonal faces.  The three square
faces commute strictly, by functoriality and naturality of the associator.
On the block $S_{i,j,k,l,m}^{a,b,c,d}$, the six pentagonal faces contribute the
scalars
\begin{equation*}
        \delta(j,k,l,m),\ 
        \delta(a,k,l,m),\ 
        \delta(i,b,l,m),\ 
        \delta(i,j,c,m),\ 
        \delta(i,j,k,d),\ 
        \delta(i,j,k,l),
\end{equation*}
with the usual alternating signs.
  Equivalently, \scriptsize
\begin{equation}\label{eq:K5-block-relation}
        \delta(j,k,l,m)\delta(i,b,l,m)\delta(i,j,k,d)S_{i,j,k,l,m}^{a,b,c,d} 
        =
        \delta(a,k,l,m)\delta(i,j,c,m)\delta(i,j,k,l)S_{i,j,k,l,m}^{a,b,c,d}.
\end{equation} \normalsize
Since $S_{i,j,k,l,m}^{a,b,c,d}$ is nonzero, we may cancel it and obtain
\begin{equation}
\label{eq:Pi-fusion-4-cocycle}
        \delta(j,k,l,m)\delta(i,b,l,m)\delta(i,j,k,d)
        =
        \delta(a,k,l,m)\delta(i,j,c,m)\delta(i,j,k,l).
\end{equation}
\end{proof}

\end{document}